\newcounter{ipotesi}
\newtheorem{thm}{Theorem}[section]
\newtheorem{hyp}[thm]{Hypotheses}{\rm}
\newtheorem{hyp0}[thm]{Hypothesis}{\rm}
\newtheorem{lemm}[thm]{Lemma}
\newtheorem{cor}[thm]{Corollary}
\newtheorem{prop}[thm]{Proposition}
\newtheorem{rmk}[thm]{Remark}{\rm}
\newcounter{parentenv}
\newcommand{\R}{{\mathbb R}}
\newcommand{\CC}{{\mathbb C}}
\newcommand{\E}{{\mathbb E}}
\newcommand{\N}{{\mathbb N}}
\newcommand{\Rn}{\mathbb R^n}
\newcommand{\Om}{{\Omega}}
\newcommand{\supp}{{\rm{supp}}\,}
\newcommand{\one}{\mbox{$1\!\!\!\;\mathrm{l}$}}
\newcommand{\ve}{\varepsilon}
\newcommand{\eps}{\varepsilon}
\newcommand{\ra}{\rightarrow}
\renewcommand{\tilde}[1]{\widetilde{#1}}
\newcommand{\set}[1]{{\left\{#1\right\}}}
\newcommand{\pa}[1]{{\left(#1\right)}}
\newcommand{\sq}[1]{{\left[#1\right]}}
\newcommand{\gen}[1]{{\left\langle #1\right\rangle}}
\newcommand{\abs}[1]{{\left|#1\right|}}
\newcommand{\norm}[1]{{\left\|#1\right\|}}
\newcommand{\eqsys}[1]{{\left\{\begin{array}{ll}#1\end{array}\right.}}
\newcommand{\tc}{\, \middle |\,}
\begin{document}

\frenchspacing

\title[Gradient estimates on infinite dimensional convex domains]{Gradient estimates for
perturbed Ornstein-Uhlenbeck semigroups on infinite dimensional convex domains}

\author[L. Angiuli]{{L. Angiuli}$^*$}\thanks{$^*$Corresponding author}

\author[S. Ferrari]{{S. Ferrari}}

\author[D. Pallara]{{D. Pallara}}

\address[L. Angiuli, S. Ferrari and D. Pallara]{Dipartimento di Matematica e Fisica ``Ennio de Giorgi'', Universit\`a del Salento, Via per Arnesano snc, 73100 Lecce, Italy.}
\email{\textcolor[rgb]{0.00,0.00,0.84}{luciana.angiuli@unisalento.it}}
\email{\textcolor[rgb]{0.00,0.00,0.84}{simone.ferrari@unisalento.it}}
\email{\textcolor[rgb]{0.00,0.00,0.84}{diego.pallara@unisalento.it}}
\subjclass[2010]{28C20, 35B40, 35D30, 35K35, 35K57, 46G12.}

\keywords{Asymptotic behaviour, gradient estimates, logarithmic Sobolev inequality, Poincar\'e inequality, weighted measures, Wiener spaces.}

\begin{abstract}
Let $X$ be a separable Hilbert space endowed with a non-degenerate centred Gaussian measure $\gamma$ and let $\lambda_1$ be the maximum eigenvalue of the covariance operator associated with $\gamma$. The associated Cameron--Martin space is denoted by $H$. For a sufficiently regular convex function $U:X\ra\R$ and a convex set $\Omega\subseteq X$, we set $\nu:=e^{-U}\gamma$ and we consider the semigroup
$(T_\Omega(t))_{t\geq 0}$ generated by the self-adjoint operator defined via the quadratic form
\[
(\varphi,\psi)\mapsto \int_\Omega\gen{D_H\varphi,D_H\psi}_Hd\nu,
\]
where $\varphi,\psi$ belong to $D^{1,2}(\Omega,\nu)$, the Sobolev space defined as the domain of the closure in $L^2(\Omega,\nu)$ of $D_H$, the gradient operator along  the directions of $H$.

A suitable approximation procedure allows us to prove some pointwise gradient estimates for $(T_\Om(t))_{t\ge 0}$. In particular, we show that
\[
|D_H T_\Om(t)f|_H^p\le e^{- p \lambda_1^{-1} t}(T_\Om(t)|D_H f|^p_H),
\qquad\, t>0,\ \nu\text{\rm -a.e. in }\Om,
\]
for any $p\in [1,+\infty)$ and $f\in D^{1,p}(\Om ,\nu)$. We deduce some relevant consequences of the previous estimate, such as the logarithmic Sobolev inequality and the Poincar\'e inequality in $\Om$ for the measure $\nu$ and some improving summability properties for $(T_\Omega(t))_{t\geq 0}$. In addition we prove that if $f$ belongs to $L^p(\Omega,\nu)$ for some $p\in(1,\infty)$, then
\[|D_H T_\Omega(t)f|^p_H \leq K_p t^{-\frac{p}{2}} T_\Omega(t)|f|^p,\qquad \, t>0,\ \nu\text{-a.e. in }\Omega,\]
where $K_p$ is a positive constant depending only on $p$.
Finally we investigate on the asymptotic behaviour of the semigroup $(T_\Om(t))_{t\geq 0}$ as $t$ goes to infinity.
\end{abstract}

\date{\today}

\maketitle

\section*{Introduction}

This paper is a contribution to the study of infinite dimensional elliptic and parabolic partial differential equations.
The basic data are an abstract Wiener space $(X,H,\gamma)$ and a quadratic form
which defines a self-adjoint operator. This is a recent field of research, that finds its main motivation in stochastic analysis and its different applications to mathematical finance, statistical mechanics, hydrodynamics and quantum mechanics.
The simplest (still, quite challenging) case is that of a Hilbert space $X$ endowed with a Gaussian measure $\gamma$ and the Dirichlet form
\[
(\varphi,\psi) \mapsto \int_X\langle D_H\varphi,D_H\psi\rangle_H\, d\gamma ,
\]
that defines an Ornstein-Uhlenbeck operator $L$ which in turn generates the associated Ornstein-Uhlenbeck semigroup. Here $D_H$ denotes the gradient along the directions of Cameron-Martin space $H$. Much has been done on this subject, see \cite{Bog,DaP02,Hin03,Hin11,MvN07,MvN11}, relying on the available explicit Mehler's formula for the semigroup. In this case, the related stochastic differential equation is the Langevin one, i.e.,
\[
 dX(t) = - X(t)\, dt + dW^H(t),
\]
where $W^H(t)$ is a cylindrical Brownian motion.
It is natural to look for generalisations of the available results, going in two directions: one is that of replacing $\gamma$ with a more general measure, the other is that of considering integration on a domain $\Omega\subseteq X$. One of the main properties of Gaussian measures is that they factor according to the orthogonal decompositions of $H$, and this allows to get explicit formulas when integrating on the whole space $X$ and to perform finite dimensional approximations with increasing sequences of subspaces. Moreover, integrating on a domain requires to deal with boundary conditions (or suitable classes of test functions) that have to be assigned in order to correctly define an operator and the generated semigroup. Introducing a different measure makes the finite dimensional approximation much more delicate and prevents to get explicit formulas even if the problem is studied in the whole space. Restricting to a domain, beside involving boundary conditions that have to be understood, makes still more difficult the infinite dimensional approximation, and in fact, to the best of our knowledge, the only case treated in the literature is that of convex domains, see \cite{ACF17, BDaPT1, BDaPT2, BDaPT3, Cap15, CF18, DL10, DL15, LMP}.

In this paper we consider a log-concave weighted Gaussian measure $\nu = e^{-U}\gamma$ on a separable Hilbert space $X$. Here $\gamma={\mathcal N}(0,Q_\infty)$ is the Gaussian measure with zero mean and covariance operator $Q_\infty:=-QA^{-1}$ where
$Q$ is a self-adjoint bounded non-negative and non-degenerate operator on $X$, $A:D(A)\subseteq X\to X$ is a self-adjoint operator such that $\langle Ax,x\rangle \leq -\omega |x|^2$ ($\omega > 0$) and $Q_\infty$ is a trace-class operator with non-negative eigenvalues $(\lambda_i)_{i\in\N}$. The function $U:X\to\R$ is convex and sufficiently regular (precise hypotheses are stated in Section \ref{prelim-sect}). We consider the quadratic form
\begin{equation}\label{Dirnu}
 {\mathcal D}_\Omega(\varphi,\psi) = \int_{\Omega}\langle D_H\varphi,D_H\psi\rangle_H\, d\nu ,
\end{equation}
which gives rise to the Kolmogorov operator (formally defined in a variational way through ${\mathcal D}_\Omega$)
\[
 L={\rm Tr}(D^2_H )-\sum_{i=1}^{+\infty}\lambda_i^{-1}x_i D_i
 -\langle D_HU,D_H\rangle_H
\]
and to the stochastic differential equation
\begin{equation}\label{star}
 dX(t) = - (X(t)+DU(X(t)))\, dt + Q_\infty^{1/2}\, dW^H(t) + \text{boundary terms},
\end{equation}
(we do not enter into the details of boundary terms because we shall not come back to the stochastic side, see \cite{BDaPT1,BDaPT2} for a precise formulation of the equation \eqref{star}).
The domain we assign to the quadratic form corresponds heuristically to Neumann boundary conditions for $L$ on $\partial\Omega$, and $L$ generates a strongly continuous semigroup $(T_\Omega(t))_{t\ge 0}$ (simply denoted by $T_\Om(t)$) in $L^p(\Omega,\nu)$ for $1\leq p<\infty$.
In order to study this semigroup, we proceed with a double approximation. We approximate $U$ via Moreau-Yosida type operators and penalise the characteristic function of $\Omega$ in order to state the problem in the whole space, eventually getting the restriction to $\Omega$ when the penalisation converges to
$\chi_\Omega$. It is here that the convexity assumption on $\Omega$ is essential. Indeed, in infinite dimension there is no available procedure to mimic the standard domain decomposition and partition of unity arguments which are classical in finite dimension. Once the (approximate) problem has been formulated in the whole space, we perform a finite dimensional approximation which provides a quite regular family of semigroups converging to $T_\Om(t)f$ in a suitable sense and to which the results of the finite dimensional case can be applied.

As we don't know any smoothing property of $T_\Omega(t)$ (it is not even known whether $T_\Omega(T)$ maps $C_b(\Omega)$ in $C_b(\Omega)$)), we exploit the smoothing properties of the approximating semigroups.
Indeed, the smoothness of the approximants is the crucial tool for many computations in this paper. Among the most relevant results that follow, there is the pointwise gradient estimate
\begin{equation}\label{g-e-intro}
|D_H T_\Om(t)f|_H^p\le e^{- p \lambda_1^{-1} t}(T_\Om(t)|D_H f|^p_H),\qquad\, t>0,\ \nu\text{\rm -a.e. in }\Om,
\end{equation}
which holds true for any $p\in [1,+\infty)$ and $f$ smooth enough, $\lambda_1$ being the maximum eigenvalue of the covariance operator $Q_\infty$.
Besides its own interest, estimate \eqref{g-e-intro} represents the key tool in the investigation of many qualitative properties of $T_{\Om}(t)$ and the related invariant measure $\nu$.
In the finite dimensional case, gradient estimates similar to \eqref{g-e-intro} are usually obtained by using the Bernstein method, which relies upon a variant of the classical maximum principle (see \cite{Lor17} and the reference therein) that does not have a counterpart in the infinite dimensional case, or by using stochastic techniques, such as the Bismut--Elworthy--Li formula (see \cite{Cer01,DaP02} and reference therein) and coupling methods (see for example \cite{Cra91, Cra92, Wan97}).
On the other hand, in infinite dimensional Wiener spaces some partial results are also available. In the case of a Gaussian measure $\gamma$ and $\Omega=X$, the classical Mehler's representation formula
\[
T(t)f(x)=\int_X f\pa{e^{-t}x+\sqrt{1-e^{-2t}}y}d\gamma(y),
\]
gives $D_H T(t)f=e^{-t}T(t)(D_H f)$, where the equality has to be meant componentwise, (see \cite[Proposition 1.5.6]{Bog}).
Again for the Gaussian measure $\gamma$ on a convex subset $\Omega$, in \cite[Theorem 3.1]{Cap15} it is proved that $|D_H T(t)f|_H\leq e^{-t}T(t)|D_H f|_H$ for any smooth function $f$. In this case, the idea consists in approximating the parabolic problem with a sequence of finite dimensional parabolic problems and using the factorisation of the Gaussian measure. Clearly, this approach does not work in our case since our measure in general does not decompose as a product of measures on orthogonal subspaces.
Finally, the case of a weighted Gaussian measure is also considered in \cite{DaP02} where a version of \eqref{g-e-intro} is proved when $\Omega=X$ and the $H$-derivative  is replaced by the Fr\'echet one.  We point out that, in this latter case, the proof of the gradient estimate is based on purely stochastic techniques.

Hence, taking account of the existing literature, estimate \eqref{g-e-intro} represents a generalisation of all the above results and the purely analytical proof we proposed, inspired by an idea due to Bakry and \'Emery (see \cite{BE85} and \cite{Sav14}), is a novelty in the proofs of gradient estimates.

As announced, the pointwise gradient estimate \eqref{g-e-intro} has several interesting consequences.
First of all it yields that the semigroup $T_\Om(t)$ is smoothing, in the sense that it is bounded from $L^p(\Om,\nu)$ into $D^{1,p}(\Om,\nu)$, for any $p\in(1,\infty)$ and $t>0$ as the estimate
\begin{align*}
\norm{D_H T_\Omega(t)f}_{L^p(\Omega,\nu;H)}
&\leq C_p t^{-\frac{1}{2}} \norm{f}_{L^{p}(\Omega,\nu)},
\end{align*}
reveals. Due to the fact that the Sobolev embedding theorems fail to hold when we replace the Lebesgue measure with another general measure (as the Gaussian one), despite $T_\Om(t)$ maps $L^p(\Om,\nu)$ into $D^{1,p}(\Om ,\nu)$, a natural basic question is whether the semigroup $T_\Om(t)$ is hypercontractive, i.e., if, given any $f\in L^q(\Om, \nu)$, $q\in[1,\infty)$, the function $T_\Om(t)f$ belongs to $L^p(\Om ,\nu)$ for some $p>q$. To give a positive answer, the starting point is the proof of a logarithmic Sobolev inequality for the measure $\nu$ which, as in the case of Gaussian measures, implies that the semigroup $T_\Om(t)$ is hypercontractive in the $L^p$-spaces related to the measure $\nu$.  We also show a Poincar\'e inequality in $L^p(\Om ,\nu)$ for $p\in[2,\infty)$ that together with the hypercontractivity estimate $\|T_\Om(t)f\|_{L^p(\Om, \nu)}\le c_{p,q,\Om}\|f\|_{L^q(\Om,\nu)}$ which holds for any $t>0$, $f\in L^q(\Om,\nu)$ and some $p>q$, allows us to study the asymptotic behaviour of $T_\Om(t)f$ as $t \to +\infty$ for $f\in L^p(\Om, \nu)$, $p>1$, and to relate it to the behaviour of the derivative $|D_HT_\Om(t)f|$ as $t\to +\infty$. This last result was already known in the finite dimensional setting for evolution operators associated to non-autonomous elliptic operator (see \cite{ALL13}).
These estimates are drawn in a more or less standard way: we have presented sketches of proofs (or even complete proofs) for the convenience of the reader.

Further consequences can be deduced, but these will be hopefully matter of other works.\\

\paragraph{\bf Acknowledgements} The authors are members of GNAMPA of the Italian Istituto Nazionale di Alta Matematica (INdAM). L.A. and S.F. have been partially supported by the INdAM-GNAMPA Project ``Equazioni e sistemi di equazioni di Kolmogorov in dimensione finita e non'' (2017). S.F. has been partially supported by the INdAM-GNAMPA Project ``Equazioni e sistemi di equazioni ellittiche e paraboliche a coefficienti illimitati'' (2018). D.P. has been partially supported by the INFN and by the INdAM-GNAMPA Projects ``Regolarit\`a massimale per alcuni operatori lineari ellittici degeneri'' (2017) and ``Teoria geometrica della misura in spazi metrici con misura'' (2018). The authors have been also partially supported by the PRIN project ``Deterministic and stochastic evolution equations'' of the Italian Ministry of Education MIUR.

\section*{Notations}
For any $k\geq 0$ and $n\in \N$, we denote by $C^k(\R^n)$ the space of continuous functions with continuous derivative up to the $[k]$-th order (here $[k]$ denotes the integer part of $k$) such that the $[k]$-th derivative is $(k-[k])$-H\"older continuous, if $k\notin \N$. We use the subscript ``$b$'' to denote the space of all functions in $C^k(\R^n)$ which are bounded together with all their derivatives up to the $[k]$-th order. $C^k_b(\Rn)$ is endowed  with the norm
\[
\|f\|_{C_b^k(\Rn)}:=\sum_{|\alpha|\le [k]}\|D^\alpha f\|_{\infty}+\sum_{|\alpha|=[k]}[D^\alpha f]_{k-[k]},
\]
where  $\norm{\cdot}_\infty$ denotes the sup-norm and, for any $\alpha\in (0,1)$, $[\cdot]_{\alpha}$ is the $\alpha$-H\"older seminorm. We use the subscript ``loc'' to denote the space of all $f\in C^{[k]}(\Rn)$ such that the derivatives of order $[k]$ are $(k-[k]$)-H\"older continuous in any compact subset of $\Rn$.
For any interval $J$ and $\alpha,\beta \ge 0$, we denote by $C^{\alpha,\beta}(J\times \Rn)$ the usual parabolic H\"older space.
The subscripts ``b'' and ``loc'' have the same meanings as above.

We also consider functions defined in infinite dimensional spaces. $X$ denotes a separable Hilbert space endowed with its norm $|\cdot|$ and inner product $\langle\cdot,\cdot\rangle$, while $\mathcal{L}(X)$ denotes the space of bounded linear operators from $X$ to itself, endowed with its operator norm $\norm{\cdot}_{\mathcal{L}(X)}$.

We define $C_b(X)$ to be the space of all functions $f:X\to \R$ which are continuous and bounded in $X$.
For any $k\in \N$, we denote by $C_b^k(X)$ the space of functions $f:X\to \R$ which have bounded and continuous Fr\'echet derivatives up to the order $k$ with norm
\[
\|f\|_{C_b^k(X)}:=\sum_{j=0}^{k}\|D^j f\|_{\infty},
\]
where $D^j$ denotes the $j$-th Fr\'echet derivative operator. Moreover if $f:X\ra \R$ is Lipschitz continuous we set $[f]_{\rm Lip}= \sup_{x,y \in X,\,x\neq y}\pa{|f(x)-f(y)||x-y|^{-1}}$.

For any $f:[0,+\infty)\times X\to \R$, once an orthonormal Hilbert basis $(v_i)_{i\in\N}$ has been fixed,
we use the symbols $D_tf, D_i f$ to denote, respectively, the time derivative of $f$ and the directional derivative of $f$ in the direction of $v_i$. We use the same notation in $\Rn$ where $D_i f$ denotes the directional derivative of $f$ along the $i$-th vector of the canonical basis of $\Rn$. Analogous meaning is given to the symbols $D_{ij}f$ and $D_{ijk}f$.

For any finite Radon measure $\mu$ on $X$ and $1\le p <\infty$, the set $L^p(X, \mu)$ consists of all measurable functions $f:X\to \R$ such that $\|f\|_{L^p(X,\mu)}^p:=\int_X |f|^p d\mu<+\infty$, while $L^\infty(X, \mu)$ is the space of all $\mu$-essentially bounded functions with norm $\|f\|_\infty= {\rm ess\,sup}_{x\in X}|f(x)|$. In a similar way we define the spaces $L^p(X, \mu;X)$ and $L^p(X, \mu; \mathcal{H}_2)$ where $\mathcal{H}_2$ is the space of Hilbert-Schmidt operators and the measurability is meant in Bochner's sense. With $p'$ we denote the conjugate exponent of $p$, i.e., $1/p+1/p'=1$, with the standard convention that $1'=\infty$.

 \makeatletter \@addtoreset{equation}{section}
\def\theequation{\thesection.\arabic{equation}}
\makeatother \makeatletter

\section{Assumptions and preliminary results}\label{prelim-sect}

We start this section by listing the hypotheses we assume throughout the paper.
\begin{hyp}\label{hyp_base}
Let assume that
\begin{enumerate}[\rm(i)]
\item $Q\in \mathcal{L}(X)$ is a self-adjoint and non-negative operator with ${\rm Ker}\, Q=\{0\}$;
\item $A:D(A)\subseteq X\ra X$ is a self-adjoint operator satisfying $\gen{Ax,x}\leq -\omega\abs{x}^2$ for every $x\in D(A)$ and some positive $\omega$;
\item $Qe^{tA}=e^{tA}Q$ for any $t\geq 0$;
\item ${\rm Tr}(-QA^{-1})<+\infty$.
\end{enumerate}
\end{hyp}
Under Hypotheses \ref{hyp_base} we can consider the Gaussian measure $\gamma$ with mean zero, covariance operator $Q_\infty:=-QA^{-1}$ and an orthonormal basis $(v_k)_{k\in\N}$ of $X$ such that
\begin{equation}\label{qinfty}
Q_\infty v_k=\lambda_k v_k,\qquad k\in\N,
\end{equation}
where $(\lambda_k)_{k\in\N}$ is the decreasing sequence of eigenvalues of $Q_\infty$.

The Cameron-Martin space $(H, |\cdot|_H)$, where
\[
H=\set{x\in X\tc \sum_{k=1}^{+\infty}\lambda_k^{-1}\gen{x,v_k}^2<+\infty},
\]
and $|\cdot|_H$ is the norm induced by the inner product $\gen{h,k}_H:=\langle Q_\infty ^{-1/2}h,Q_\infty ^{-1/2}k\rangle$, $h,k \in H$, is a Hilbert space which is densely embedded in $X$. Note that, as $H= Q_\infty^{1/2}X$, the sequence $(e_k)_{k\in \N}$, where $e_k=\sqrt{\lambda_k}v_k$ for any $k\in \N$, is an orthonormal basis of $H$.

We need to recall the definition of Lipschitz continuous function along the Cameron-Martin space $H$. If $Y$ is a Banach space with norm $\norm{\cdot}_Y$, a function $F:X\ra Y$ is said to be $H$-Lipschitz continuous if there exists a positive constant $C$ such that
\begin{gather}\label{costante di H-lip}
\norm{F(x+h)-F(x)}_Y\leq C\abs{h}_H,
\end{gather}
for every $h\in H$ and $\gamma$-a.e. $x\in X$ (see \cite[Section 4.5 and Section 5.11]{Bog} for the basic properties of $H$-Lipschitz continuous functions). We denote with $[F]_{H\text{-Lip}}$ the best constant $C$ appearing in \eqref{costante di H-lip}.

Now, we introduce a notion of derivative weaker than the classical Fr\'echet one. We say that
$f:X\rightarrow\R$ is $H$-differentiable at $x_0\in X$ if there exists $\ell\in H$ such that
\[
f(x_0+h)=f(x_0)+\gen{\ell,h}_H+o(|h|_H),\qquad\text{as $|h|_H\rightarrow0$.}
\]
In such a case we set $D_H f(x_0):=\ell$ and $D_i f(x_0):= \langle D_H f(x_0), e_i\rangle_H$ for any $i\in \N$. The derivative $D_H f(x_0)$ is called the \emph{Malliavin derivative} of $f$ at $(x_0)$. In a similar way we say that $f$ is twice $H$-differentiable at $x_0$  if $f$  is $H$-differentiable near $x_0$ and there exists $\mathcal B\in \mathcal{H}_2$ such that
\[
f(x_0+h)=f(x_0)+\gen{D_H f(x_0),h}_H+\frac{1}{2}\langle \mathcal B h,h\rangle_H+o(|h|^2_H),\qquad\text{as $|h|_H\rightarrow0$.}
\]
In such a case we set $D^2_H f(x_0):=\mathcal B$ and $D_{ij} f(x_0):= \langle D^2_H f(x_0)e_j, e_i\rangle_H$ for any $i,j\in \N$. We recall that if $f$ is twice $H$-differentiable at $x_0$, then $D_{ij}f(x_0)=D_{ji}f(x_0)$ for every $i,j\in\N$.

\begin{rmk}
{\rm If a function $f:X\rightarrow\R$ is (resp. twice) Fr\'echet differentiable at $x_0$ then it is (resp. twice) $H$-differentiable at $x_0$ and it holds $D_Hf(x_0)=Q_{\infty}^{1/2}Df(x_0)$,
(resp. $D^2_Hf(x_0)=Q_{\infty}^{1/2}D^2 f(x_0)Q_{\infty}^{1/2}$).}
\end{rmk}

For any $k\in\N\cup\set{\infty}$, we denote by $\mathcal{F}C_b^k(X)$, the space of cylindrical $C^k_b$ functions, i.e., the set of functions $f:X\to \R$ such that $f(x)=\varphi(\langle x, h_1\rangle,\ldots, \langle x, h_N\rangle)$ for some  $\varphi \in C_b^k(\R^N)$, $h_1,\ldots, h_N\in H$ and $N\in\N$.
By $\mathcal{F}C_b^k(X,H)$ we denote $H$-valued cylidrical $C_b^k$ functions with finite rank.

The Sobolev spaces in the sense of Malliavin $D^{1,p}(X,\gamma)$ and $D^{2,p}(X,\gamma)$ with $p\in[1,\infty)$, are defined as the completions of the \emph{smooth cylindrical functions} $\mathcal{F}C_b^\infty(X)$ in the norms
\begin{gather*}
\norm{f}_{D^{1,p}(X,\gamma)}:=\pa{\norm{f}^p_{L^p(X,\gamma)}+\int_X\abs{D_H f}_H^pd\gamma}^{\frac{1}{p}};\\
\norm{f}_{D^{2,p}(X,\gamma)}:=\pa{\norm{f}^p_{D^{1,p}(X,\gamma)}+\int_X|D_H^2 f|^p_{\mathcal{H}_2}d\gamma}^{\frac{1}{p}}.
\end{gather*}
This is equivalent to consider the domain of the closure of the gradient operator, defined on smooth cylindrical functions, in $L^p(X,\gamma)$. \\
We define a weighted Gaussian measure considering a function $U:X\ra\R$ that satisfies the following
\begin{hyp0}\label{ipotesi peso}
$U$ is a convex function which belongs to $C^2(X)\cap D^{1,q}(X,\gamma)$ for all $q\in[1,\infty)$.
\end{hyp0}

The convexity of the function $U$ guarantees that $U$ is bounded from below by a linear function, therefore it decreases at most linearly and by Fernique's theorem (see \cite[Theorem 2.8.5]{Bog}) $e^{-U}$ belongs to $L^1(X,\gamma)$. Then we can consider the finite log-concave measure
\begin{equation*}
\nu:= e^{-U}\gamma.
\end{equation*}
Notice that $\gamma$ and $\nu$ are equivalent measures, hence saying that a statement holds $\gamma$-a.e. is the same as saying that it holds $\nu$-a.e.
Moreover the fact that $U$ belongs to $D^{1,q}(X,\gamma)$ for any $q\in[1,\infty)$ allows us to
conclude that the operator $D_H:\mathcal{F}C^1_b(X)\ra L^p(X,\nu;H)$ is closable in $L^p(X,\nu)$, $p\in(1,\infty)$ and we may define the space $D^{1,p}(X,\nu)$, $p>1$, as the domain of its closure (still denoted by $D_H$). In a similar way we can define $D^{2,p}(X,\nu)$, $p\in(1,\infty)$ (for more details see \cite{CF16} and \cite{Fer15}). The Gaussian integration by parts formula $\int_X D_i f d\gamma=\frac{1}{\sqrt{\lambda_i}}\int_X \langle x,v_i\rangle f d\gamma$, which holds true for any  $f\in \mathcal{F}C^1_b(X)$ and $i\in\N$, yields that
\begin{gather}
\label{int by parts}\int_X \psi D_i \varphi d\nu+\int_X\varphi D_i\psi d\nu=\int_X \varphi\psi D_i U d\nu+\frac{1}{\sqrt{\lambda_i}}\int_X \langle x, v_i\rangle\varphi\psi d\nu,\qquad\;\,i \in \N,
\end{gather}
for any $\varphi, \psi \in \mathcal{F}C^1_b(X)$, hence by density for any $\varphi, \psi \in D^{1,p}(X,\nu),\ p\in(1,\infty)$.

In what follows $\Omega$ denotes an open convex subset of $X$. In this case the spaces $D^{1,p}(\Omega,\nu)$ and $D^{2,p}(\Omega,\nu)$, $p\in(1,\infty)$, can be defined in a similar way as in the whole space, thanks to the following result (proved in \cite{AMM} in the Gaussian case).

\begin{prop}
Let assume that Hypotheses $\ref{hyp_base}$ and $\ref{ipotesi peso}$ are satisfied and let $p\in(1,\infty)$ and $\Om$ be an open subset of $X$. Then the operators $D_H:\mathcal{F}C_b^\infty(\Omega)\ra L^p(\Omega,\nu; H)$ and
\begin{gather}\label{opra}
(D_H,D_H^2):\mathcal{F}C_b^\infty(\Omega)\times \mathcal{F}C_b^\infty(\Omega)\ra L^p(\Omega,\nu; H)\times L^p(\Omega,\nu; \mathcal{H}_2)
\end{gather}
are closable in $L^p(\Omega,\nu)$ and $L^p(\Omega,\nu)\times L^p(\Omega,\nu)$, respectively. Here $\mathcal{F}C_b^\infty(\Omega)$ is the space of the restriction to $\Omega$ of the functions in $\mathcal{F}C_b^\infty(X)$.
\end{prop}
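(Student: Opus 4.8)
The plan is to derive both closability statements from the whole-space integration by parts formula \eqref{int by parts}, coupled with a localisation that forces the test functions to be supported inside $\Om$. Recall that the first operator is closable exactly when, for every sequence $(f_n)\subseteq\mathcal{F}C_b^\infty(\Om)$ with $f_n\to 0$ in $L^p(\Om,\nu)$ and $D_Hf_n\to G$ in $L^p(\Om,\nu;H)$, one has $G=0$; since $\abs{D_if_n-\gen{G,e_i}_H}\le\abs{D_Hf_n-G}_H$, writing $g_i:=\gen{G,e_i}_H$ this is equivalent to $g_i=0$ $\nu$-a.e. in $\Om$ for every $i\in\N$. For the pair $(D_H,D_H^2)$ one has moreover $D_H^2f_n\to\mathcal B$ in $L^p(\Om,\nu;\mathcal H_2)$, and the extra requirement is $\mathcal B=0$. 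The substitute for the compactly supported cut-offs of the finite dimensional theory is built from the Hilbert norm: fixing $x_0\in\Om$ and $r>0$ with $\overline{B(x_0,r)}\subseteq\Om$, choosing $\theta\in C_b^\infty(\R)$ with $0\le\theta\le 1$, $\theta\equiv1$ on $(-\infty,(r/2)^2]$ and $\theta\equiv0$ on $[r^2,+\infty)$, and setting $\chi(x):=\theta(\abs{x-x_0}^2)$, the function $\chi$ is Fr\'echet smooth, bounded, supported in $\overline{B(x_0,r)}$, and Lipschitz with respect to $\abs{\cdot}$; since $H$ embeds continuously in $X$, $\chi$ is bounded and $H$-Lipschitz, hence $\chi\in D^{1,q}(X,\gamma)$ for every $q\in[1,\infty)$ (see \cite[Section 5.11]{Bog}), and, $e^{-U}$ being bounded, also $\chi\in D^{1,q}(X,\nu)$. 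Thus, for any $\eta\in\mathcal{F}C_b^\infty(X)$ the product $\chi\eta$ lies in $D^{1,q}(X,\nu)$ and is supported in $\overline{B(x_0,r)}\subseteq\Om$.

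For the first order statement I would extend $f_n$ to $\tilde f_n\in\mathcal{F}C_b^\infty(X)$, apply \eqref{int by parts} on $X$ with $\varphi=\tilde f_n$ and second function $\chi\eta$ in the direction $e_i$, and then exploit the locality of $D_H$ together with the fact that every integrand carrying a factor $\chi\eta$ or $D_i(\chi\eta)$ is supported in $\overline{B(x_0,r)}\subseteq\Om$ to rewrite the identity as
\[
\int_\Om (\chi\eta)\,D_if_n\,d\nu=-\int_\Om f_n\,D_i(\chi\eta)\,d\nu+\int_\Om f_n\,(\chi\eta)\,D_iU\,d\nu+\frac{1}{\sqrt{\lambda_i}}\int_\Om \langle x,v_i\rangle\,f_n\,(\chi\eta)\,d\nu.
\]
The three functions $D_i(\chi\eta)$, $(\chi\eta)D_iU$ and $\langle x,v_i\rangle(\chi\eta)$ all belong to $L^{p'}(\Om,\nu)$: the first is bounded with bounded support; the second because $\chi\eta$ is bounded with bounded support and $D_iU\in L^{p'}(X,\nu)$ by Hypothesis \ref{ipotesi peso} and the boundedness of $e^{-U}$; the third because $\langle x,v_i\rangle$ is bounded on $\overline{B(x_0,r)}$. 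Letting $n\to\infty$, the left-hand side tends to $\int_\Om(\chi\eta)g_i\,d\nu$ while the right-hand side vanishes, so $\int_\Om(\chi\eta)g_i\,d\nu=0$ for all $\eta\in\mathcal{F}C_b^\infty(X)$. Since $\chi g_i$, extended by zero, lies in $L^p(X,\nu)$ and cylindrical functions are dense in $L^{p'}(X,\nu)$, this forces $\chi g_i=0$ $\nu$-a.e., hence $g_i=0$ $\nu$-a.e. on $B(x_0,r/2)$; covering $\Om$ by countably many such balls gives $G=0$.

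For the pair I would first invoke the closability of $D_H$ just obtained to infer $G=0$, so that $D_if_n\to0$ in $L^p(\Om,\nu)$ for every $i$. Applying \eqref{int by parts} on $X$ with $\varphi=D_i\tilde f_n$ and second function $\chi\eta$ in the direction $e_j$, the same localisation yields
\[
\int_\Om (\chi\eta)\,D_{ij}f_n\,d\nu=-\int_\Om (D_if_n)\,D_j(\chi\eta)\,d\nu+\int_\Om (D_if_n)\,(\chi\eta)\,D_jU\,d\nu+\frac{1}{\sqrt{\lambda_j}}\int_\Om \langle x,v_j\rangle\,(D_if_n)\,(\chi\eta)\,d\nu.
\]
Only first derivatives of $U$ occur here, so no control on $D_H^2U$ is needed; letting $n\to\infty$ and using $D_if_n\to0$ with the same $L^{p'}$ bounds, the right-hand side vanishes and the left-hand side tends to $\int_\Om(\chi\eta)\mathcal B_{ij}\,d\nu$, where $\mathcal B_{ij}=\langle\mathcal B e_j,e_i\rangle_H$. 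The density argument then gives $\mathcal B_{ij}=0$ $\nu$-a.e. in $\Om$ for all $i,j$, i.e. $\mathcal B=0$, completing the closability of $(D_H,D_H^2)$.

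I expect the genuine obstacle to be precisely the localisation: in infinite dimensions there are no compactly supported smooth cylindrical functions, so one cannot simply multiply by a cut-off as in the finite dimensional or purely cylindrical setting in which \eqref{int by parts} is first established. The resolution is to employ the non-cylindrical bump functions $\chi$ manufactured from the Hilbert norm, and the delicate points are exactly the verification that such $\chi$ belong to $D^{1,q}(X,\nu)$ and the justification of passing between integrals over $\Om$ and over $X$. Convexity of $\Om$ plays no role in this argument, and the sole term absent from the Gaussian treatment of \cite{AMM}, the one involving $D_iU$, is harmless thanks to the assumption $U\in D^{1,q}(X,\gamma)$ for every $q$.
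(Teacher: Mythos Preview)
Your approach is essentially the paper's: localise with test functions supported well inside $\Om$, apply the whole-space integration by parts \eqref{int by parts}, and let $n\to\infty$. The paper works directly with the class ${\rm Lip}_c(\Om)$ of bounded Lipschitz functions whose support has positive distance from $\Om^c$, invoking its density in $L^{p'}(\Om,\nu)$ via Lusin's theorem and \cite{Man90}; you instead use products $\chi\eta$ with a norm-based bump $\chi$ and cylindrical $\eta$, and finish with a covering by balls. Both are valid packagings of the same idea, and your observation that only first-order information on $U$ is needed for the second-order closability is correct.

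One correction, however: $e^{-U}$ is \emph{not} bounded in general. Convexity only yields $U(x)\ge\langle a,x\rangle+b$ for some $a\in X$ and $b\in\R$, so $e^{-U}$ is dominated by $e^{-\langle a,\cdot\rangle-b}$, which is unbounded on $X$ but lies in every $L^q(X,\gamma)$ by Fernique's theorem. In the two places where you invoke boundedness of $e^{-U}$---to pass from $\chi\in D^{1,q}(X,\gamma)$ to $\chi\in D^{1,q}(X,\nu)$, and to conclude $D_iU\in L^{p'}(X,\nu)$---the correct justification is H\"older's inequality together with $e^{-U}\in\bigcap_{q\ge1}L^q(X,\gamma)$; this is exactly how the paper handles the corresponding terms. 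With that fix your argument goes through.
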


\begin{proof}
We just prove that the operator $D_H:\mathcal{F}C_b^\infty(\Omega)\ra L^p(\Omega,\nu; H)$ is closable in $L^p(\Omega,\nu)$, since the proof that the operator defined in \eqref{opra} is closable in $L^p(\Omega,\nu)\times L^p(\Omega,\nu)$ is quite similar. By the linearity of the operator $D_H$ it is enough to prove that if $(f_k)_{k\in\N}\subseteq \mathcal{F}C^\infty_b(\Omega)$ is such that
\begin{align*}
&\phantom{D_H}\lim_{k\ra+\infty}f_k=0\qquad\text{ in }L^p(\Omega,\nu);\\
&\lim_{k\ra+\infty}D_H f_k=\Phi\qquad\text{ in }L^p(\Omega,\nu;H),
\end{align*}
then $\Phi=0$ $\nu$-a.e in $\Omega$.

By Lusin's theorem and standard arguments following from \cite{Man90}, the space ${\rm Lip}_c(\Omega)$  of the bounded Lipschitz functions $u$ defined on $X$ with bounded support such that ${\rm dist}(\supp u,\Omega^c)>0$ is dense in $L^p(\Omega,\nu)$. So it is enough to prove that $\int_{\Omega}\gen{\Phi,e_i}_Hu d\nu=0$, for every $i\in\N$ and $u\in {\rm Lip}_c(\Omega)$.

To this aim, let us fix $u\in {\rm Lip}_c(\Omega)$ and observe that, by the H\"older inequality, Hypothesis \ref{ipotesi peso} and the fact that $e^{-U}\in L^{q}(X,\gamma)$ for every $q\in[1,\infty)$, we get
\begin{align}\label{zampirone1}
\lim_{k\ra+\infty}\int_\Omega f_k D_i ud\nu&\leq [u]_{{\rm Lip}}[\nu(\Omega)]^{1/p'}
\lim_{k\ra+\infty}\|f_k\|_{L^p(\Om,\nu)}=0
\end{align}
and
\begin{equation}
\lim_{k\ra+\infty} \int_\Omega f_k uD_i Ud\nu \leq\norm{u}_\infty\|D_i U\|_{L^{p'q}(X,\gamma)}\| e^{-U}\|_{L^{q'}(X,\gamma)}^{1/p'} \lim_{k\ra+\infty}\|f_k\|_{L^p(\Om,\nu)}=0;\label{zampirone2}
\end{equation}
for every $i\in\N$ and $q\in(1,\infty)$.
Moreover, Fernique's theorem and the quoted hypotheses imply that
\begin{equation}
\lim_{k\ra+\infty} \int_\Omega f_k u\frac{\gen{x,v_i}}{\sqrt{\lambda_i}}d\nu
\leq\frac{\norm{u}_\infty}{\sqrt{\lambda_i}}
\pa{\int_X |\gen{x,v_i}|^{p'q}d\gamma}^{\frac{1}{p'q}}\| e^{-U}\|_{L^{q'}(X,\gamma)}^{1/p'} \lim_{k\ra+\infty}\|f_k\|_{L^p(\Om,\nu)}=0.\label{zampirone4}
\end{equation}
Now, we claim that $\int_\Omega \gen{\Phi,e_i}_H ud\nu=\lim_{k\ra+\infty}\int_X \tilde{u}D_i f_kd\nu$, where $\tilde{u}$ is the null extension of $u$ out of $\Omega$. Indeed, again by using the hypotheses listed above we get
\begin{align*}\lim_{k\ra+\infty} \int_\Omega u(D_if_k-\gen{\Phi,e_i}_H)d\nu&\leq\norm{u}_\infty[\nu(\Omega)]^{1/p'}
\!\!\!\lim_{k\ra+\infty}\pa{\int_\Omega |D_if_k-\gen{\Phi,e_i}_H|^pd\nu}^{1/p}\!\!\!=0.
\end{align*}
To conclude, let us observe that $\tilde{u}$ is Lipschitz continuous on $X$, so by the integration by parts formula \eqref{int by parts} and \eqref{zampirone1}-\eqref{zampirone4} we deduce
\begin{align*}
\int_\Omega \gen{\Phi,e_i}_H ud\nu&=\lim_{k\ra+\infty}\int_X \tilde{u}D_i f_kd\nu
\\
&=\lim_{k\ra+\infty}\int_X f_k\pa{-D_i \tilde{u}+\tilde{u}D_iU+\tilde{u}\frac{\gen{x,v_i}}{\sqrt{\lambda_i}}}d\nu\\
&=\lim_{k\ra+\infty}\int_\Omega f_k\pa{-D_i u+uD_iU+u\frac{\gen{x,v_i}}{\sqrt{\lambda_i}}}d\nu=0.
\end{align*}
This proves the claim.
\end{proof}
\noindent The spaces $D^{1,p}(\Omega,\nu;H)$, $p\in(1,\infty)$, are defined in a similar way, replacing smooth cylindrical functions with $H$-valued smooth cylindrical functions. We recall that if $F\in D^{1,p}(\Omega,\nu;H)$, then $D_H F$ belongs to $\mathcal{H}_2$.

In the sequel we consider boundary Cauchy problems defined in $\Om$ and we will need some continuity properties of the \emph{distance function along $H$}, $d_{\Om}:X\to [0,+\infty]$, defined by
\begin{gather*}
d_\Om(x):=\left\{\begin{array}{lc}
\inf\{|h|_H\,|\, h\in H\cap (\Omega-x)\}, \quad \quad& H\cap (\Omega-x)\neq\emptyset;\\
+\infty, & H\cap (\Omega-x)=\emptyset,
\end{array}\right.
\end{gather*}
for any  $x\in X$. In the following proposition we recall some results about the function $d_\Omega$ (see \cite[Theorems 2.8.5 \& 5.11.2]{Bog} and \cite[Section 3]{CF18}).

\begin{prop}\label{prop_dist}
Let $\Omega\subseteq X$ be an open convex set. Then $d_\Omega^2$ is $H$-differentiable and its Malliavin derivative is $H$-Lipschitz with $H$-Lipschitz constant less than or equal to $2$, i.e.,
\[|D_Hd_\Om^2(x+h)-D_Hd_{\Om}^2(x)|_H\le 2 |h|_H,\]
for any $h\in H$ and for $\nu$-a.e $x\in X$. Moreover $D_H^2d_\Omega^2$ exists $\nu$-a.e. in $X$ and $d_\Omega^2$ belongs to $D^{2,p}(X,\nu)$ for every $p\in[1,\infty)$.
\end{prop}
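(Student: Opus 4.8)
The plan is to reduce the entire statement, via a translation trick, to the elementary differentiation of the squared distance to a \emph{fixed} closed convex set in the Hilbert space $H$, and then to recover the Hilbert--Schmidt integrability of the second derivative from the trace--class structure of $Q_\infty$.

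First I would record two elementary properties of $d_\Om$. Convexity of $\Om$ gives at once that $d_\Om$ is convex: if $h_0,h_1\in H$ satisfy $x_0+h_0,x_1+h_1\in\Om$, then for $t\in[0,1]$ the point $(1-t)x_0+tx_1+\big((1-t)h_0+th_1\big)$ lies in $\Om$, so $d_\Om((1-t)x_0+tx_1)\le(1-t)|h_0|_H+t|h_1|_H$, and one takes the infimum over $h_0,h_1$. The same comparison with $x_1=x_0+k$, $k\in H$, shows that $d_\Om$ is $H$-Lipschitz with constant $1$; in particular $d_\Om\in L^q(X,\nu)$ for every $q$ by Gaussian concentration. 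Since $\Om$ is open and nonempty it has positive measure, while $\{d_\Om<+\infty\}=\Om+H$ is $H$-invariant and hence of full measure by the zero--one law; thus $d_\Om$, and $d_\Om^2$, is finite $\nu$-a.e.

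The core is the following translation identity. For $x$ with $d_\Om(x)<+\infty$ set $K_x:=\overline{(\Om-x)\cap H}$, the closure taken in $H$; this is a nonempty closed convex subset of $H$. Since $k\in H$ maps $H$ onto itself, one checks that $K_{x+k}=K_x-k$ for every $k\in H$, whence
\[
d_\Om^2(x+k)=\operatorname{dist}_H(k,K_x)^2=:g_x(k),\qquad k\in H .
\]
Thus the restriction of $d_\Om^2$ to the slice $x+H$ is, in $H$-coordinates, the squared distance to the fixed closed convex set $K_x$. The Moreau theory gives that $g_x$ is of class $C^{1,1}$ on $H$ with $\nabla g_x(k)=2(k-\Pi_{K_x}(k))$, $\Pi_{K_x}$ being the metric projection onto $K_x$; in particular $g_x$ is Fr\'echet differentiable at $0$, which yields the $H$-differentiability of $d_\Om^2$ at $x$ together with $D_Hd_\Om^2(x)=\nabla g_x(0)=-2\Pi_{K_x}(0)$ (so that $|D_Hd_\Om^2(x)|_H=2\,d_\Om(x)$). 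Since $g_{x+k}=g_x(k+\,\cdot\,)$, differentiating at $0$ gives $D_Hd_\Om^2(x+k)=\nabla g_x(k)=2(k-\Pi_{K_x}(k))$, and the nonexpansiveness of $I-\Pi_{K_x}$ produces
\[
|D_Hd_\Om^2(x+k)-D_Hd_\Om^2(x)|_H=2\,|(I-\Pi_{K_x})(k)-(I-\Pi_{K_x})(0)|_H\le 2\,|k|_H ,
\]
which is the asserted $H$-Lipschitz bound with constant $2$.

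It remains to handle the second derivative. Existence of $D_H^2d_\Om^2$ $\nu$-a.e. follows by applying the Gaussian Rademacher theorem \cite[Theorem 5.11.2]{Bog} to the $H$-Lipschitz map $D_Hd_\Om^2$, giving $D_H^2d_\Om^2(x)=2(I_H-D\Pi_{K_x}(0))$. The genuinely infinite--dimensional difficulty, which I expect to be the main obstacle, is the Hilbert--Schmidt integrability needed for membership in $D^{2,p}$: the operator $I_H-D\Pi_{K_x}$ has $\mathcal L(H)$-norm at most $1$, but this alone is useless since $I_H\notin\mathcal H_2$. The Hilbert--Schmidt bound must instead be extracted from the way $\Om$ sits inside $X$: because $\Om$ is an open subset of $X$ and the embedding $H\hookrightarrow X$ is Hilbert--Schmidt (equivalently $\sum_j\lambda_j^2<+\infty$, $Q_\infty$ being trace class), the principal curvatures of $\partial K_x$ measured in the $H$-metric are square--summable, being controlled by the eigenvalues $(\lambda_j)$, so that $I_H-D\Pi_{K_x}\in\mathcal H_2$ with a bound depending only on $\operatorname{Tr} Q_\infty$. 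I would make this quantitative by approximating $\Om$ with smooth convex sets and tracking the second--order estimates through the trace--class factor $Q_\infty$; this is precisely the analysis carried out in \cite[Section 3]{CF18}. Passing to the limit, using the closability of $(D_H,D_H^2)$ in $L^p(X,\nu)$ together with the integrability bounds $d_\Om^2,\,|D_Hd_\Om^2|_H=2d_\Om\in L^p(X,\nu)$ and the uniform Hilbert--Schmidt bounds, yields $d_\Om^2\in D^{2,p}(X,\nu)$ for every $p\in[1,\infty)$ and identifies the limit with $D_H^2d_\Om^2$.
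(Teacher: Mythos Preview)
The paper does not actually prove this proposition; it simply refers to \cite[Theorems 2.8.5 \& 5.11.2]{Bog} and \cite[Section 3]{CF18}. Your proposal is a correct and more explicit version of what those references do: the translation-to-$H$ reduction and Moreau's formula give the $C^{1,1}$ part with the sharp constant $2$, \cite[Theorem 5.11.2]{Bog} gives a.e.\ second $H$-differentiability, and you correctly isolate the Hilbert--Schmidt integrability of $D_H^2d_\Omega^2$ as the only nontrivial point (the operator bound $\|I_H-D\Pi_{K_x}\|_{\mathcal L(H)}\le 1$ being useless since $I_H\notin\mathcal H_2$) and defer it to \cite[Section 3]{CF18}, exactly as the paper does. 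One small slip in your heuristic: the embedding $H\hookrightarrow X$ is Hilbert--Schmidt because $\sum_j\lambda_j<+\infty$ ($Q_\infty$ is trace class), not because $\sum_j\lambda_j^2<+\infty$ as you wrote.
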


In order to prove our results we need further regularity of the second order Malliavin derivative of the distance function along $H$. More precisely, we assume that
\begin{hyp0}\label{ipo_convex}
$\Om$ is an open convex subset of $X$ such that $\nu(\partial\Omega)=0$ and $D_H^2d_\Omega^2$ is $H$-continuous $\gamma$-a.e. in $X$; i.e., for $\gamma$-a.e. $x\in X$
\[
\lim_{\abs{h}_H\ra 0}D^2_H d^2_\Omega(x+h)=D^2_H d^2_\Omega(x).
\]
\end{hyp0}

\begin{rmk}
{\rm We point out that there is a rather large class of subsets of $X$ satisfying Hypothesis \ref{ipo_convex}. For instance, by \cite{FR82} and \cite{Hol73}, if $\partial \Omega$ is (locally) a $C^2$-embedding in $X$ of an open subset of a hyperplane in $X$ and $\nu(\partial\Omega)=0$, then Hypothesis \ref{ipo_convex} is satisfied. Easy examples are:
\begin{enumerate}[{\rm (i)}]
\item every open ball and open ellipsoid of $X$;

\item every open hyperplane of $X$;

\item every set of the form $\Omega=\set{x\in X\tc G(x)< 0}$, where $G:X\ra\R$ is convex, belongs to $C^2(X)$ and $D_H G$ is non-zero at every point of $\partial \Omega$ (check \cite[Theorem 1(a)]{Hol73}).
\end{enumerate}}
\end{rmk}

An important tool in our analysis are the \emph{Moreau-Yosida approximants of $U$ along $H$}. We recall the main properties of this approximation and we refer to \cite[Section 12.4]{BauCom} for the classical theory and to \cite{ACF17,CF16,CF18} for the case considered here.

\begin{prop}\label{moreau-prop}
Let $f:X\ra\R\cup\set{+\infty}$ be a proper convex and lower semicontinuous function and denote by ${\rm dom}(f)=\set{x\in X\tc f(x)<+\infty}$. For any $\varepsilon>0$ and $x\in X$, let us consider
\begin{gather}\label{M env}
f_\eps(x):=\inf\set{f(x+h)+\frac{1}{2\eps}|h|_H^2\tc h\in H}.
\end{gather}
Then the following properties hold true:
\begin{enumerate}[\rm (i)]
\item $f_\eps(x)\leq f(x)$ for any $\eps>0$ and $x\in X$. Moreover $f_\eps(x)$ converges monotonically to $f(x)$ for any $x\in X$, as $\varepsilon\to 0^+$;

\item $f_\eps$ is $H$-differentiable in $X$ and $D_Hf_\eps$ is $H$-Lipschitz continuous in $X$;

\item $f_\eps$ belongs to $D^{2,p}(X,\gamma)$, whenever $f\in L^p(X,\gamma)$ for some $1\in[1,\infty)$;

\item if $x\in {\rm dom}(f)$ and $f$ belongs to $D^{1,p}(X,\gamma)$ for some $p\in[1,\infty)$, then $D_H f_\eps(x)$ converges to $D_H f(x)$ as $\eps\to 0^+$;

\item if $f\in C^2(X)\cap D^{2,p}(X,\gamma)$ for some $p\in[1,\infty)$ and $f$ is twice $H$-differentiable at every point $x\in{\rm dom}(f)$, then $D_H^2 f_\eps(x)$ exists and  converges to $D_H^2 f(x)$ as $\eps\to 0^+$, for any $x\in {\rm dom}(f)$. Furthermore $D^2_H f_\eps$ is $H$-continuous in $X$, i.e. $\lim_{|h|_H\to 0}D^2_H f_\eps(x+h)=D^2_h f_\eps(x)$ for any $x \in X$.
\end{enumerate}
\end{prop}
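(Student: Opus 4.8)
The plan is to reduce every statement to the classical Moreau--Yosida theory in the Hilbert space $(H,|\cdot|_H)$, exploiting the fact that the regularisation in \eqref{M env} acts only along $H$-directions. For fixed $x\in X$ put $\phi_x:H\to\R\cup\set{+\infty}$, $\phi_x(h):=f(x+h)$; since $f$ is proper, convex and lower semicontinuous on $X$ and $H$ embeds continuously in $X$, the function $\phi_x$ is proper, convex and lower semicontinuous on $H$. The key observation is the translation identity: for every $k\in H$,
\[
f_\eps(x+k)=\inf_{h\in H}\set{f(x+h)+\tfrac{1}{2\eps}|h-k|_H^2}=(\phi_x)_\eps(k),
\]
obtained from \eqref{M env} by the change of variable $h\mapsto h-k$, where $(\phi_x)_\eps$ is the classical Moreau envelope of $\phi_x$ in $H$. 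Thus the behaviour of $f_\eps$ along the affine subspace $x+H$ coincides with that of a genuine Hilbert-space Moreau envelope, and every pointwise claim reduces to \cite[Section 12.4]{BauCom}. Property (i) is then immediate: taking $h=0$ in \eqref{M env} gives $f_\eps(x)\le f(x)$, while the monotone convergence $f_\eps(x)\uparrow f(x)$ as $\eps\to0^+$ is the classical convergence of the Moreau envelope of the lower semicontinuous $\phi_x$ at the point $0$.

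For (ii), (iv) and (v) I would transport the corresponding Hilbert-space results through the translation identity. Differentiating $k\mapsto f_\eps(x+k)=(\phi_x)_\eps(k)$ at $k=0$ shows that $f_\eps$ is $H$-differentiable at $x$ with
\[
D_Hf_\eps(x)=\nabla(\phi_x)_\eps(0)=-\tfrac{1}{\eps}\,\mathrm{prox}_{\eps\phi_x}(0),\qquad \mathrm{prox}_{\eps\phi_x}(0):=\mathop{\mathrm{arg\,min}}_{h\in H}\set{f(x+h)+\tfrac{1}{2\eps}|h|_H^2},
\]
and, since the gradient of a Moreau envelope in $H$ is $\eps^{-1}$-Lipschitz, the identity yields $|D_Hf_\eps(x+k)-D_Hf_\eps(x)|_H\le \eps^{-1}|k|_H$ for all $x\in X$ and $k\in H$, i.e. the $H$-Lipschitz continuity with constant $\eps^{-1}$. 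For (iv), $D_Hf_\eps(x)$ equals $D_Hf$ evaluated at the proximal point, which converges to the minimal-norm element of the $H$-subdifferential $\partial_H f(x)$; when $f\in D^{1,p}(X,\gamma)$ this element is $D_Hf(x)$, giving $D_Hf_\eps(x)\to D_Hf(x)$. For (v), the $C^2$ assumption makes $\mathrm{Id}+\eps D_H^2f\succeq \mathrm{Id}$ invertible, so by the inverse function theorem the resolvent $J_\eps:=(\mathrm{Id}+\eps D_Hf)^{-1}$ is a $C^1$ map; writing $z:=J_\eps(x)$ one gets $D_Hf_\eps(x)=D_Hf(z)$ and $D_H^2f_\eps(x)=D_H^2f(z)\,(\mathrm{Id}+\eps D_H^2f(z))^{-1}$, whence both the existence and the $H$-continuity of $D_H^2f_\eps$, and, since $z\to x$ as $\eps\to0^+$, the convergence $D_H^2f_\eps(x)\to D_H^2f(x)$.

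Property (iii) is where the real work lies, as here only $f\in L^p(X,\gamma)$ is assumed and one must produce genuine Gaussian--Sobolev derivatives rather than merely pointwise ones. I would argue by finite-dimensional cylindrical approximation: replace $f$ by its conditional expectations onto the first $n$ coordinates, form the corresponding envelopes, and pass to the limit using the closedness of $(D_H,D_H^2)$. The $L^p$-bound on $f_\eps$ itself is obtained by combining $f_\eps\le f$ with the affine minorant of the convex function $f$, both of which lie in $L^p(X,\gamma)$. The delicate point---and the main obstacle---is an $L^p(X,\gamma;\mathcal H_2)$-bound on $D_H^2f_\eps$ that is uniform in $n$: the operator inequality $0\preceq D_H^2f_\eps\preceq\eps^{-1}\mathrm{Id}$ coming from convexity controls only the operator norm, which is useless for the Hilbert--Schmidt norm in infinite dimensions. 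To overcome this I would use that, for the positive semidefinite operator $D_H^2f_\eps$,
\[
|D_H^2f_\eps|_{\mathcal H_2}^2\le \|D_H^2f_\eps\|_{\mathcal L(H)}\,\mathrm{Tr}(D_H^2f_\eps)\le \tfrac{1}{\eps}\,\mathrm{Tr}(D_H^2f_\eps),
\]
and then control the trace through the Gaussian integration-by-parts formula \eqref{int by parts}, which rewrites $\int_X\mathrm{Tr}(D_H^2f_\eps)\,d\gamma$ in terms of quantities of the form $\int_X\langle x,D_Hf_\eps\rangle$ involving only first derivatives; these are estimated by the $L^q$-bounds on $D_Hf_\eps$ (from (ii) and the affine minorant of $f$) together with the Gaussian moments of $|x|$, with constants independent of $n$. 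Combined with the $L^p$-bound on $f_\eps$, this produces uniform $D^{2,p}$-bounds on the cylindrical approximants, and closedness of the Sobolev operator delivers $f_\eps\in D^{2,p}(X,\gamma)$. I expect the trace estimate, and in particular the passage from an $L^1$-type control of the trace to the full $L^{p/2}$-control needed for general $p$ (via iterated integration by parts or Meyer-type inequalities), to be the technically heaviest step.
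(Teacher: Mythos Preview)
The paper does not actually prove this proposition: it is stated as a preliminary result, with the sentence ``We recall the main properties of this approximation and we refer to \cite[Section 12.4]{BauCom} for the classical theory and to \cite{ACF17,CF16,CF18} for the case considered here'' standing in place of a proof. So there is no in-paper argument to compare against; the paper simply imports the result from the literature.

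That said, your strategy is precisely the one underlying those references. The translation identity $f_\eps(x+k)=(\phi_x)_\eps(k)$ is the right device to reduce the pointwise statements (i), (ii), (iv), (v) to the classical Moreau--Yosida theory in the Hilbert space $H$, and your formulae $D_Hf_\eps(x)=-\eps^{-1}\mathrm{prox}_{\eps\phi_x}(0)$ and $D_H^2f_\eps(x)=D_H^2f(z)(\mathrm{Id}+\eps D_H^2f(z))^{-1}$ are standard consequences. Two small cautions: the properness of $\phi_x$ is not automatic from the properness of $f$ (it requires $(x+H)\cap\mathrm{dom}(f)\neq\emptyset$, though the excluded case is trivial), and in (v) you should make clear that the inverse-function argument is carried out on the $H$-slice $k\mapsto x+k$, since $D_Hf$ takes values in $H$ rather than $X$.

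For (iii) you have correctly located the genuine difficulty: the operator bound $0\preceq D_H^2f_\eps\preceq\eps^{-1}\mathrm{Id}$ gives no Hilbert--Schmidt control in infinite dimension. Your trace-plus-integration-by-parts route is viable for $p=2$ but, as you yourself flag, upgrading to general $p$ requires either Meyer-type inequalities or an iterated argument; this is indeed where the references \cite{CF16,CF18} do the work. In short, your outline is sound and matches the approach the paper defers to; the only part you have not fully closed is the $L^p$ Hilbert--Schmidt bound in (iii), and you have identified that gap accurately.
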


\paragraph{\bf Further notation} We now introduce some notations which will be largely used in the paper. For any $i,n \in \N$ and $x\in X$, we define
$x_i:=\sqrt{\lambda_i}\langle x,v_i\rangle$ and by $\Pi_n$ the projection $\Pi_n:X\to\Rn$, $\Pi_n x:=(x_1,\ldots, x_n)$. The function $\Sigma_n$ denotes the embedding $\Sigma_n:\Rn\to H$, $\Sigma_n \xi:= \sum_{k=1}^n \xi_k e_k$, for any $\xi \in\Rn$.
\noindent
Moreover, if $P_n:X\to H$ is defined by $P_n x:=\sum_{i=1}^n x_i e_i$ for any $x\in X$ and $n\in \N$, then the conditional expectation of $f$, $\E_n f$ defined as follows
\[
\E_nf(x):=\int_Xf\pa{P_nx+(I-P_n)y}d\gamma(y),\qquad\;\, f\in L^p(X,\gamma),\,\,p\in[1,\infty),
\]
enjoys some good continuity properties (see \cite[Corollary 3.5.2 and Proposition 5.4.5]{Bog} for a proof of the following result).

\begin{prop}\label{convergenza En f}
Assume that Hypotheses $\ref{hyp_base}$ hold true and let $1\leq p<+\infty$, $k\in\N$ and $f\in D^{k,p}(X,\gamma)$. Then $\E_n f$ belongs to $D^{k,p}(X,\gamma)$ and converges to $f$ in $D^{k,p}(X,\gamma)$ and pointwise $\gamma$-a.e. in $X$, as $n$ tends to $+\infty$. Moreover $\|\E_n f\|_{D^{k,p}(X,\gamma)}\leq\norm{f}_{D^{k,p}(X,\gamma)}$ and \[D_i\E_n f=\eqsys{\E_nD_i f & 1\leq i\leq n;\\
0 & i>n.}\]
\end{prop}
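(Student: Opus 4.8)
The plan is to realise $\E_n$ as the Gaussian conditional expectation $\E[\,\cdot\mid\mathcal{F}_n]$ with respect to the $\sigma$-algebra $\mathcal{F}_n$ generated by the coordinates $\langle\cdot,v_1\rangle,\dots,\langle\cdot,v_n\rangle$, and to use that the $\mathcal{F}_n$ increase to the full Borel $\sigma$-algebra modulo $\gamma$-null sets. With this identification the case $k=0$ is classical: $\E_n$ is an $L^p(X,\gamma)$-contraction by Jensen's inequality, and $\E_n f\to f$ in $L^p(X,\gamma)$ and $\gamma$-a.e.\ by Doob's martingale convergence theorem. The real task is therefore to transfer these facts to the Sobolev scale, and the bridge is the commutation rule between $D_i$ and $\E_n$.

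First I would establish the derivative identity on the smooth cylindrical functions $\mathcal{F}C_b^\infty(X)$, where $\E_n f$ is an ordinary finite-dimensional Gaussian integral over the tail variables. Since the ``head'' $P_n x$ depends only on the first $n$ coordinates, $\E_n f$ is again smooth cylindrical and independent of the coordinates of index $>n$; differentiating under the integral sign then gives both $D_i\E_n f=0$ for $i>n$ and $D_i\E_n f=\E_n D_i f$ for $1\le i\le n$, and an entirely analogous computation yields $D_{ij}\E_n f=\E_n D_{ij}f$ for $i,j\le n$ (and $0$ otherwise). This is the computational core; everything else is bootstrapped from it.

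Next I would prove the contraction bound $\norm{\E_n f}_{D^{k,p}(X,\gamma)}\le\norm{f}_{D^{k,p}(X,\gamma)}$ on $\mathcal{F}C_b^\infty(X)$. Writing $D_H\E_n f=\sum_{i=1}^n(\E_n D_i f)e_i$ exhibits $D_H\E_n f$ as the componentwise conditional expectation of the truncated gradient $\sum_{i=1}^n(D_i f)e_i$, whose $H$-norm is dominated by $|D_H f|_H$; Jensen's inequality for the conditional expectation, applied first to the convex map $|\cdot|_H$ and then to $t\mapsto t^p$, gives $|D_H\E_n f|_H^p\le\E_n|D_H f|_H^p$, and the same reasoning with the Hilbert--Schmidt norm gives $|D_H^2\E_n f|_{\mathcal{H}_2}^p\le\E_n|D_H^2 f|_{\mathcal{H}_2}^p$. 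Integrating over $X$ and using $\int_X\E_n g\,d\gamma=\int_X g\,d\gamma$ yields the claimed bound. Since $\mathcal{F}C_b^\infty(X)$ is dense in $D^{k,p}(X,\gamma)$ by definition and $\E_n$ is now a $\norm{\cdot}_{D^{k,p}(X,\gamma)}$-contraction on this dense subspace, it extends uniquely to all of $D^{k,p}(X,\gamma)$; passing to the limit along approximating sequences shows that $\E_n f\in D^{k,p}(X,\gamma)$, that the derivative formula persists, and that the contraction bound holds for every $f$.

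Finally, for the convergence I would argue by a three-$\varepsilon$ estimate. The smooth cylindrical functions depending on finitely many coordinates $\langle\cdot,v_1\rangle,\dots,\langle\cdot,v_N\rangle$ are dense in $D^{k,p}(X,\gamma)$ (approximate the directions $h_j$ of a general cylindrical function by their projections onto $\mathrm{span}(v_1,\dots,v_N)$), and any such $g$ satisfies $\E_n g=g$ once $n\ge N$. Hence $\norm{\E_n f-f}_{D^{k,p}(X,\gamma)}\le\norm{\E_n(f-g)}_{D^{k,p}(X,\gamma)}+\norm{\E_n g-g}_{D^{k,p}(X,\gamma)}+\norm{g-f}_{D^{k,p}(X,\gamma)}$, and for $n\ge N$ the middle term vanishes, leaving $2\norm{f-g}_{D^{k,p}(X,\gamma)}$ by the uniform contraction; this can be made arbitrarily small by choosing $g$ close to $f$. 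The pointwise $\gamma$-a.e.\ convergence is then just Doob's theorem for the martingale $(\E_n f)_n$. The step I expect to be the main obstacle is the rigorous commutation of $D_H$ with $\E_n$ and the vector-valued Jensen inequality, together with the verification that these identities, established only on smooth cylindrical functions, survive the passage to the closure defining $D^{k,p}(X,\gamma)$; it is precisely the uniform contractivity proved on the dense subspace that makes this passage legitimate.
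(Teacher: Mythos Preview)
The paper does not give its own proof of this proposition: it simply cites \cite[Corollary 3.5.2 and Proposition 5.4.5]{Bog}. Your proposal is correct and is precisely the standard argument behind those references --- identifying $\E_n$ with the conditional expectation $\E[\,\cdot\mid\mathcal{F}_n]$, invoking Doob's martingale convergence for the $k=0$ case, establishing the commutation $D_i\E_n=\E_nD_i$ (for $i\le n$) on smooth cylindrical functions by differentiation under the integral, and lifting everything to $D^{k,p}(X,\gamma)$ by the uniform contractivity and density.
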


We conclude this section by recalling the main properties of the semigroup generated by the operator $L_\Omega$ in $L^2(\Om,\nu)$ defined as
\begin{align}
D(L_{\Omega})=\bigg\{u\in D^{1,2}(\Omega,\nu)\,\bigg|\,& \text{there exists\,\,} v_u\in L^2(\Omega,\nu)\text{ such that }\notag\\
&\left.\int_\Omega\gen{D_H u,D_H \varphi}_Hd\nu=-\int_\Omega v_u\varphi d\nu\text{ for every }\varphi\in\mathcal{F}C^\infty_b(\Om)\right\},\label{defn_LOm}
\end{align}
with $L_{\Omega}u:=v_u$ if $u\in D(L_{\Omega})$.

\begin{prop}\label{prop_TOm}
Under Hypotheses $\ref{hyp_base},\ref{ipotesi peso}$ and $\ref{ipo_convex}$, the following properties hold true.
\begin{enumerate}[\rm (i)]
\item For any $\lambda>0$ and $f\in L^2(\Om, \nu)$, the equation $\lambda u- L_{\Om}u=f$ in $\Om$ has a weak solution $u\in D^{1,2}(\Om, \nu)$, i.e., for every $\varphi\in D^{1,2}(\Omega,\nu)$ it holds
\[
\lambda\int_\Omega u\varphi d\nu+\int_\Omega \gen{D_H u,D_H \varphi}_H d\nu=\int_\Omega f\varphi d\nu.
\]
Moreover $u\in D^{2,2}(\Om, \nu)$ and the equation $\lambda u- L_{\Om}u=f$, $\lambda>0$, holds $\nu$-a.e. in $\Om$. Denoting by $R(\lambda,L_\Omega)$ the resolvent operator of $L_\Om$, the following estimates hold:
\begin{equation}\label{stime3}
\norm{R(\lambda,L_\Om)f}_{L^2(\Om,\nu)}\leq \frac{\norm{f}_{L^2(\Om,\nu)}}{\lambda},\quad \norm{D_HR(\lambda,L_\Om)f}_{L^2(\Om,\nu;H)}\leq \frac{\norm{f}_{L^2(\Om,\nu)}}{\sqrt{\lambda}},
\end{equation}
and
\begin{equation}\label{stime4}
\norm{D^2_H R(\lambda,L_\Om)f}_{L^2(\Om,\nu;\mathcal H_2)}\leq \sqrt{2} \norm{f}_{L^2(\Om,\nu)},
\end{equation}
Consequently $L_\Omega$ generates a bounded self-adjoint analytic semigroup $T_\Omega(t)$ in $L^2(\Omega,\nu)$.

\item $T_\Omega(t)$ can be extended to a positivity preserving contraction semigroup $($still denoted by $T_\Omega(t))$ in $L^p(\Omega,\nu)$ for every $1\leq p\leq +\infty$ and $t\geq 0$. In addition it is strongly continuous in $L^p(\Om, \nu)$ for any $p \in [1,+\infty)$.

\item If $f\in C_b(\Omega)$ has positive infimum in $\Om$, then $T_\Omega(t)f$ has a positive $\nu$-essential infimum, for any $t>0$.

\item For any convex function $\varphi:\R\to\R$,
\begin{equation}\label{jensen}
\varphi(T_\Omega(t)f)\leq T_\Omega(t)(\varphi\circ f),\qquad\;\, \nu\text{\rm -a.e.\,\,in\,\,} \Om,\,\, t>0, \,\,f\in C_b(\Om).
\end{equation}

\item For any $p\in (1,+\infty)$
\begin{equation}\label{holder}
T_\Omega(t)(fg)\leq(T_\Omega(t) \abs{f}^p)^{1/p}(T_\Omega(t)\abs{g}^{p'})^{1/{p'}}\qquad\;\, \nu\text{\rm -a.e.\,\,in\,\,} \Om,\,\,t>0, \,\,f,g\in C_b(\Om).
\end{equation}

\item For any $p\in[1,\infty)$, $f\in L^p(\Omega,\nu)$ and $g\in L^\infty(\Omega,\nu)$ it holds
\begin{gather*}
\int_\Omega f T_\Omega(t)gd\nu=\int_\Omega gT_\Omega(t)fd\nu,\qquad t>0.
\end{gather*}
\end{enumerate}
\end{prop}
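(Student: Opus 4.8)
The plan is to obtain part (i) by a variational argument combined with a second-order \emph{a priori} estimate, and then to read off parts (ii)--(vi) from the self-adjoint Dirichlet-form structure that results. For the \emph{weak solvability} I would apply the Lax--Milgram lemma to the bilinear form
\[
a_\la(u,\varphi):=\la\int_\Om u\varphi\,d\nu+\int_\Om\gen{D_Hu,D_H\varphi}_H\,d\nu,\qquad u,\varphi\in D^{1,2}(\Om,\nu).
\]
This form is continuous on $D^{1,2}(\Om,\nu)$ and coercive, since $a_\la(u,u)\ge\min\{\la,1\}\,\norm{u}_{D^{1,2}(\Om,\nu)}^2$, while $\varphi\mapsto\int_\Om f\varphi\,d\nu$ is a bounded functional because $\nu$ is finite; hence there is a unique $u\in D^{1,2}(\Om,\nu)$ solving the weak equation. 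Choosing $\varphi=u$ gives
\[
\la\norm{u}_{L^2(\Om,\nu)}^2+\norm{D_Hu}_{L^2(\Om,\nu;H)}^2=\int_\Om fu\,d\nu\le\norm{f}_{L^2(\Om,\nu)}\norm{u}_{L^2(\Om,\nu)},
\]
from which both inequalities in \eqref{stime3} follow immediately. Testing the weak equation only against $\varphi\in\mathcal{F}C_b^\infty(\Om)$ and comparing with \eqref{defn_LOm} shows at once that $u\in D(L_\Om)$ with $L_\Om u=\la u-f$, so $\la u-L_\Om u=f$ holds $\nu$-a.e.\ in $\Om$. Finally, since $\mathcal D_\Om$ is a densely defined, closed, symmetric and non-negative form on $L^2(\Om,\nu)$, the classical theory of closed symmetric forms gives that $-L_\Om$ is non-negative self-adjoint and that $L_\Om$ generates a bounded analytic self-adjoint contraction semigroup $T_\Om(t)$.

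The substantive point, and the step I expect to be the main obstacle, is the second-order regularity $u\in D^{2,2}(\Om,\nu)$ together with the bound \eqref{stime4}: it cannot be extracted from the abstract form, since it simultaneously encodes interior $W^{2,2}$-regularity and the Neumann boundary behaviour, and it is precisely where the convexity of $\Om$ and of $U$ is used. I would establish it through the double approximation described in the Introduction: replace $U$ by its Moreau--Yosida approximants $U_\eps$ (Proposition \ref{moreau-prop}), penalise $\chi_\Om$ in order to recast the resolvent problem on the whole space, and then project onto the $n$-dimensional subspace generated by $v_1,\dots,v_n$. For the resulting smooth finite-dimensional resolvents $u_{n,\eps}$ the estimate is classical and follows from integrating a Bochner (Bakry--\'Emery $\Gamma_2$) identity against $\nu$: the left-hand side contains $|D_H^2u|_{\mathcal H_2}^2$, a non-negative curvature term coming from the Gaussian (Ornstein--Uhlenbeck) part, and the term $\gen{(D_H^2U)D_Hu,D_Hu}_H$, which is non-negative because $U$ is convex, while the boundary term produced by $\tfrac12 L|D_Hu|_H^2$ has the favourable sign because $\Om$ is convex (its finite-dimensional model being the second fundamental form evaluated on a gradient that is tangential by the Neumann condition). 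Discarding these non-negative contributions leaves
\[
\int_\Om|D_H^2u_{n,\eps}|_{\mathcal H_2}^2\,d\nu\le\int_\Om(L u_{n,\eps})^2\,d\nu .
\]
Passing to the limit (first in $n$, then in the penalisation and in $\eps$), with the convergences $U_\eps\to U$, $D_HU_\eps\to D_HU$, $D_H^2U_\eps\to D_H^2U$ of Proposition \ref{moreau-prop} and Hypothesis \ref{ipo_convex} — through $\nu(\partial\Om)=0$ and the $H$-continuity of $D_H^2 d_\Om^2$ — ensuring that the boundary and potential terms behave correctly, transfers this bound and shows $u\in D^{2,2}(\Om,\nu)$ with
\[
\int_\Om|D_H^2u|_{\mathcal H_2}^2\,d\nu\le\int_\Om(\la u-f)^2\,d\nu\le\la^2\norm{u}_{L^2(\Om,\nu)}^2+\norm{f}_{L^2(\Om,\nu)}^2\le2\norm{f}_{L^2(\Om,\nu)}^2 ,
\]
where the middle inequality uses $\int_\Om fu\,d\nu\ge0$ and the last one uses \eqref{stime3}; this is precisely the constant $\sqrt2$ in \eqref{stime4}.

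Once $L_\Om$ is realised as the generator of the symmetric form $\mathcal D_\Om$, parts (ii)--(vi) are structural. For (ii) I would check that $\mathcal D_\Om$ is a \emph{Dirichlet form}: by the chain rule for $D_H$ one has $D_H\big((0\vee u)\wedge1\big)=\one_{\{0<u<1\}}D_Hu$, so unit contractions do not increase the energy; the standard theory of symmetric Dirichlet forms then yields that $T_\Om(t)$ is sub-Markovian, extends to a positivity-preserving contraction on $L^p(\Om,\nu)$ for every $1\le p\le\infty$, and is strongly continuous for $p<\infty$. Moreover constants belong to $D^{1,2}(\Om,\nu)$ with vanishing $H$-gradient, so $L_\Om 1=0$ and $T_\Om(t)1=1$; combined with positivity this gives (iii), since $f\ge\delta>0$ yields $T_\Om(t)f=\delta+T_\Om(t)(f-\delta)\ge\delta$. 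The same two facts give the Jensen inequality (iv): writing the convex $\varphi$ as the supremum of its affine minorants $s\mapsto\alpha s+\beta$ and applying the positive, unit-preserving $T_\Om(t)$ to $\varphi\circ f\ge\alpha f+\beta$ gives $T_\Om(t)(\varphi\circ f)\ge\alpha\,T_\Om(t)f+\beta$, and the supremum over the minorants reconstructs $\varphi(T_\Om(t)f)$. Inequality (v) uses only positivity and linearity: fixing a point, normalising by the constants $\alpha=(T_\Om(t)\abs{f}^p)^{1/p}$ and $\beta=(T_\Om(t)\abs{g}^{p'})^{1/p'}$ evaluated there, and applying $T_\Om(t)$ to Young's inequality $\abs{f}\abs{g}(\alpha\beta)^{-1}\le p^{-1}\abs{f}^p\alpha^{-p}+(p')^{-1}\abs{g}^{p'}\beta^{-p'}$ gives $T_\Om(t)(fg)\le\alpha\beta$ at that point. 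Finally (vi) is just the self-adjointness of $T_\Om(t)$ on $L^2(\Om,\nu)$: for $f\in L^p(\Om,\nu)$ and $g\in L^\infty(\Om,\nu)$ I would truncate $f_n:=f\one_{\{\abs{f}\le n\}}\in L^2\cap L^p$, use $\int_\Om f_nT_\Om(t)g\,d\nu=\int_\Om gT_\Om(t)f_n\,d\nu$ and pass to the limit, which is legitimate since $T_\Om(t)g\in L^{p'}(\Om,\nu)$ and $T_\Om(t)f_n\to T_\Om(t)f$ in $L^p(\Om,\nu)$.
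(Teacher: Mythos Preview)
Your proposal is correct and largely self-contained; the paper's own proof is by contrast almost entirely by citation, so the main interest is in comparing routes.

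For (i) the paper simply invokes \cite[Theorem 1.3]{CF18} for the existence, the $D^{2,2}$-regularity and both estimates \eqref{stime3}--\eqref{stime4}, and then appeals to standard semigroup theory. Your Lax--Milgram argument for weak solvability and \eqref{stime3} is exactly what underlies that reference, and your sketch of the second-order bound via the double approximation and a Bochner/$\Gamma_2$ computation is precisely the strategy of \cite{CF18}; in particular your derivation of the constant $\sqrt2$ from $\int_\Om fu\,d\nu\ge0$ and \eqref{stime3} is the correct bookkeeping.

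For (ii) both arguments are the same in substance: the paper checks the two Beurling--Deny type conditions and quotes \cite[Theorem 2.14, Corollary 2.18]{Ouh05}, which is your ``standard theory of symmetric Dirichlet forms''.

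The genuine divergence is in (iii)--(v). The paper invokes \cite[Theorem 4.3.5]{MR92} to obtain a Markov process with $T_\Om(t)f(x)=\mathbf E_x[f(X_t)]$ for $\nu$-a.e.\ $x$, after which (iii)--(v) are literally the classical Jensen and H\"older inequalities applied under $\mathbf E_x$. Your route stays on the operator side: you use $T_\Om(t)\one=\one$ together with positivity to get (iii), the affine-minorant description of a convex function for (iv), and a pointwise Young/normalisation trick for (v). Your arguments are correct (for (v) the degenerate case $\alpha=0$ or $\beta=0$ is handled by the positivity of $T_\Om(t)$ and a limiting argument, or simply by noting that then both sides vanish $\nu$-a.e.), and they have the advantage of avoiding the construction of the associated process; the paper's route has the advantage that, once the representation is available, nothing further needs to be checked.

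For (vi) both proofs are the obvious density/approximation from self-adjointness on $L^2$; you truncate, the paper approximates by $\mathcal{F}C^\infty_b(\Om)$.
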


\begin{proof}
(i) Inequalities \eqref{stime3} and \eqref{stime4} are proved in \cite[Theorem 1.3]{CF18}, while the last statement follows from the standard theory of semigroups.\\
(ii) It is a consequence of \cite[Theorem 2.14 and Corollary 2.18]{Ouh05}. Indeed by these results it is enough to prove the following two conditions:
\begin{enumerate}
\item if $u\in D^{1,2}(\Omega,\nu)$, then $\abs{u}\in D^{1,2}(\Omega,\nu)$ and $\int_\Omega\abs{D_H\abs{u}}_H^2d\nu\leq\int_\Omega\abs{D_Hu}_H^2d\nu$.
\item if $0\leq u\in D^{1,2}(\Omega,\nu)$, then $u\wedge \one:=\min\set{u,1}$ belongs to $D^{1,2}(\Omega,\nu)$ and
\begin{equation}\label{cappa}
\int_\Omega \abs{D_H (u\wedge \one)}_H^2d\nu\leq \int_\Omega\abs{D_Hu}_H^2d\nu.
\end{equation}
\end{enumerate}
Statement (1) follows from the fact that if $u$ belongs to $D^{1,2}(\Omega,\nu)$, then there exists a sequence $(u_n)_{n\in\N}\subseteq \mathcal{F}C^1_b(\Omega)$ converging to $u$ in $D^{1,2}(\Omega,\nu)$. It can be proved that the sequence $\tilde{u}_n=\sqrt{u_n^2+n^{-1}}$ converges to $|u|$ in $D^{1,2}(\Om,\nu)$ as $n\to +\infty$, namely $\abs{u}$ belongs to $D^{1,2}(\Omega,\nu)$. In addition $D_H\abs{u}={\rm sign}(u)D_Hu$ and $\int_\Omega\abs{D_H\abs{u}}_H^2d\nu=\int_\Omega\abs{D_Hu}_H^2d\nu$ (see \cite[Lemma 2.7]{DPL14} for further details).\\
To prove (2), as above we can consider a sequence $(u_n)_{n\in\N}\subseteq \mathcal{F}C^1_b(\Omega)$ converging to $u$ in $D^{1,2}(\Omega,\nu)$, as $n$ goes to infinity. Then, the sequence
\[\tilde{u}_n=\frac{1}{2}\pa{u_n+1-\sqrt{(u_n-1)^2+\frac{1}{n}}},\]
converges to $u\wedge\one$ as $n\to +\infty$, that is $u\wedge\one \in D^{1,2}(\Omega,\nu)$. Further,
\[
D_H(u\wedge \one)=\frac{1}{2}\pa{1-{\rm sign}(u-1)}D_Hu
\]
and \eqref{cappa} holds true (see \cite[Proposition 1.1]{Cap15} for more details). The strong continuity follows from \cite{Ouh05}.\\
(iii)-(v) Due to \cite[Theorem 4.3.5]{MR92} there is a Markov process $(\mathcal{Y},\mathcal{M},(X_t)_{t\geq 0}, (P_x)_{x\in X})$ such that $T_\Omega(t)f(x)={\bf E}_x(f(X_t))$ for $\nu$-a.e $x\in X$, where ${\bf E}_x$ denotes the expected value with respect to the probability measure $P_x$. We summarise here some of the main properties of the Markov process $(\mathcal{Y},\mathcal{M},(X_t)_{t\geq 0}, (P_x)_{x\in X})$ for the convenience of the reader:
\begin{itemize}
\item $(\mathcal{Y},\mathcal{M})$ is a measurable space;

\item there exists a filtration $(\mathcal{M}_t)_{t\ge 0}$ on $(\mathcal{Y},\mathcal{M})$ such that $(X_t)_{t\ge 0}$ is a $(\mathcal{M}_t)_{t\ge 0}$-adapted stochastic process;

\item $P_x$, $x\in X$, are probability measures on $(\mathcal{Y},\mathcal{M})$;

\item it holds $P_x[X_{s+t}\in A|\mathcal{M}_s]=P_{X_s}[X_t\in A]$ for all Borel set $A\subseteq X$, any $s,t\geq 0$ and for $P_x\text{-a.e.}$ $x\in X$.
\end{itemize}
We remark that in \cite[Chapter 4, Section 4(b)]{MR92} the authors study exactly the case we are in. The claims are easy consequences of the Jensen and H\"older inequalities.

\noindent (vi) Since $\mathcal{F}C^\infty_b(\Omega)$ is dense in $L^p(\Omega,\nu)$ for every $p\in[1,\infty)$, there exists  a sequence $(f_n)_{n\in\N}\subseteq \mathcal{F}C^\infty_b(\Om)$ such that $\lim_{n\ra+\infty}\|f_n-f\|_{L^p(\Omega,\nu)}$. By the self-adjointness of $T_\Omega$ in $L^2(\Omega,\nu)$ we get
\begin{gather*}
\int_\Omega f_n T_\Omega(t)gd\nu=\int_\Omega gT_\Omega(t)f_nd\nu,\qquad t>0.
\end{gather*}
By (ii) $T_\Omega(t)g\in L^\infty(\Omega,\nu)$, so letting $n$ to infinity we get the claim.
\end{proof}

\noindent If $\Omega=X$ the operator in \eqref{defn_LOm}, denoted by $L$, acts on smooth cylindrical functions $\varphi$ as follows
\begin{equation}\label{operator}
L\varphi:={\rm Tr}(D^2_H \varphi)-\sum_{i=1}^{\infty}\lambda_i^{-1}\langle x,e_i\rangle D_i\varphi -\langle D_HU,D_H\varphi\rangle_H, \quad\quad \nu\text{-a.e in } X,
\end{equation}
and it is symmetrised by the measure $\nu$, indeed
\begin{equation}
\label{weak form}\int_X\psi L\varphi d\nu=-\int_X\gen{D_H\varphi,D_H\psi}_Hd\nu,\qquad\,\, \varphi,\psi\in \mathcal{F}C^1_b(X).
\end{equation}

\noindent{\bf\em From now on we assume that Hypotheses \ref{hyp_base}, \ref{ipotesi peso} and \ref{ipo_convex} hold true.}

\section{An approximation result}

The main goal of this section is Theorem \ref{main theorem} which states that for any $f \in L^2(\Om, \nu)$ the function $T_{\Om}(t)f$ can be approximated in a suitable way by smooth enough functions written in terms of semigroups depending on two parameters $n$ and $\varepsilon$. These parameters take into account that the approximation procedure first reduces the problem from an infinite dimensional setting to a finite dimensional one and then, by using a penalisation argument, it allows to solve the problem in the domain $\Om$ throughout the solution of a suitable problem in the whole space.

In view of these facts we first recall some results about parabolic and elliptic problem with unbounded coefficients in finite dimension.

\subsection{Parabolic and elliptic equations in $\R^n$}\label{sect-finite}

In this subsection, we consider a convex function $\phi\in C^{2+\alpha}(\Rn)$ for some $\alpha \in(0,1)$ with bounded second derivatives and a second order differential operator $\mathcal{L}_{\phi}$ acting on smooth functions $v$ as follows
\begin{equation*}
\mathcal{L}_{\phi} v(\xi)=\Delta v(\xi)+\gen{\mathcal{B}\xi,Dv(\xi)}-\gen{D\phi(\xi),Dv(\xi)},\qquad\;\, \xi \in\Rn,
\end{equation*}
where $\mathcal{B}$ is a constant symmetric matrix such that $\gen{\mathcal{B}\xi,\xi}\leq -\beta\abs{\xi}^2$ for any $\xi \in \Rn$ and some $\beta> 0$.

It is known (see \cite[Chapter 1]{Lor17} and the reference therein) that for any $\varphi\in C_b(\Rn)$
there exists a unique bounded classical solution $v$ of problem
\begin{equation}
\left\{\begin{array}{ll}
D_t v(t,\xi)=\mathcal{L}_{\phi} v(t,\xi) &  t>0,\ \xi\in\R^n;\\
v(0,\xi)=\varphi(\xi), &  \xi\in\R^n.
\end{array}\right.\label{prob_finito_evol}
\end{equation}
Namely $v$ belongs to $C_b([0,+\infty)\times\Rn)\cap C^{1+\alpha/2, 2+\alpha}_{\rm loc}((0,+\infty)\times\Rn)$ and solves the Cauchy problem \eqref{prob_finito_evol}.
The uniqueness of $v$ is a consequence of the convexity of $\phi$ and of the existence of a Lyapunov function, i.e., a positive function $g\in C^2(\Rn)$ such that $\lim_{|\xi|\to +\infty}g(\xi)=+\infty$ and
\begin{equation}\label{Lyap}
(\mathcal L_\phi g)(\xi)-\lambda g(\xi)\le 0, \qquad\; \, \xi\in \Rn,
\end{equation}
for some $\lambda>0$. Indeed, taking $g(\xi)=\abs{\xi}^2$, $\xi \in\Rn$, we have
\begin{align*}
(\mathcal{L}_{\phi} g)(\xi)=&\, 2n+2\gen{\mathcal{B}\xi,\xi}-2\gen{D\phi(\xi),\xi}
\\
\le &\, 2n-2\beta\abs{\xi}^2-2\gen{D\phi(\xi)-D\phi(0),\xi}-2\gen{D\phi(0),\xi}
\\
\leq&\,  2n-2\beta\abs{\xi}^2+2\abs{D\phi(0)}\abs{\xi},
\end{align*}
where we have used that $\gen{D\phi(\xi)-D\phi(0),\xi}\geq 0$ for every $\xi\in\R^n$ so, clearly, we can find $\lambda$ such that inequality \eqref{Lyap} is satisfied.

In this way we can consider the semigroup $T_{\phi}(t)$ associated with $\mathcal{L}_\phi$ in $C_b(\Rn)$ and write $v(t,\xi)=(T_{\phi}(t)\varphi)(\xi)$ for any $t>0$ and $\xi\in \Rn$. It turns out that $T_\phi(t)$ is a positivity preserving contractive semigroup in $C_b(\Rn)$.

To pass from finite to infinite dimension, we prove and exploit suitable uniform gradient estimates independent of the dimension. More precisely, we prove a dimension-free uniform estimate for the gradient of $T_\phi(t)\varphi$, $\varphi\in C^1_b(\Rn)$. Such kind of estimates have already been proved for semigroups associated with more general operators (see \cite[Chapter 5]{Lor17} and the reference therein). However, since in all these estimates is not emphasised how and if the constants appearing depend on the dimension, we provide a sketch of the proofs (essentially based on the Bernstein method and the classical maximum principle) that allows us to verify that the constants are dimension-free.

\begin{prop}
The estimate
\begin{equation}\label{gra-est}
|D_\xi T_\phi(t) \varphi(\xi)|\leq \frac{\norm{\varphi}_\infty}{\sqrt{\beta t}}
\end{equation}
holds true for any $t>0$, $\xi\in\Rn$ and $\varphi\in C_b(\Rn)$. Here $\beta$ is the positive constant which bounds from below the quadratic form associated with $-\mathcal B$.
\end{prop}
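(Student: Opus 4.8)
The plan is to prove the estimate by the Bernstein method: I would construct an auxiliary function built from $u:=T_\phi(t)\varphi$ and $|D_\xi u|$ that is a subsolution of the parabolic equation associated with $\mathcal{L}_\phi$, and then invoke the maximum principle. Since the statement allows $\varphi\in C_b(\Rn)$ but the Bernstein computation needs spatial regularity, I would first assume $\varphi\in C^{2+\alpha}_b(\Rn)$, so that by the regularity theory recalled before the statement $u\in C^{1+\alpha/2,2+\alpha}_{\rm loc}((0,\infty)\times\Rn)$, prove the estimate in this case, and recover general $\varphi\in C_b$ at the end by approximation.

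First I would record the commutation identity obtained by differentiating $D_tu=\mathcal{L}_\phi u$ with respect to $\xi_k$: since $\mathcal B$ is symmetric,
\[
D_t(D_k u)-\mathcal{L}_\phi(D_k u)=(\mathcal B\,D_\xi u)_k-(D^2\phi\,D_\xi u)_k,\qquad k=1,\dots,n.
\]
Multiplying by $2D_k u$, summing over $k$, and using the elementary identity $\mathcal{L}_\phi(|D_\xi u|^2)=2|D^2_\xi u|^2+2\sum_k D_k u\,\mathcal{L}_\phi(D_k u)$, this yields
\[
D_t(|D_\xi u|^2)-\mathcal{L}_\phi(|D_\xi u|^2)=-2|D^2_\xi u|^2+2\langle\mathcal B\,D_\xi u,D_\xi u\rangle-2\langle D^2\phi\,D_\xi u,D_\xi u\rangle .
\]
Here the two structural hypotheses enter decisively: the convexity of $\phi$ gives $\langle D^2\phi\,D_\xi u,D_\xi u\rangle\ge0$, while the dissipativity $\langle\mathcal B\xi,\xi\rangle\le-\beta|\xi|^2$ gives $\langle\mathcal B\,D_\xi u,D_\xi u\rangle\le-\beta|D_\xi u|^2$; hence the right-hand side is bounded above by $-2\beta|D_\xi u|^2$.

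Next I would introduce the auxiliary function $w:=u^2+c\,t\,|D_\xi u|^2$ for a suitable constant $c>0$. Combining $D_t(u^2)-\mathcal{L}_\phi(u^2)=-2|D_\xi u|^2$ with the previous inequality one gets
\[
D_t w-\mathcal{L}_\phi w\le (c-2)|D_\xi u|^2-2c\beta t\,|D_\xi u|^2-2ct\,|D^2_\xi u|^2 ,
\]
so that, choosing $c$ adapted to $\beta$ (equivalently, replacing $c\,t$ by the solution $g$ of $g'=2+2\beta g$, $g(0)=0$, to track the sharp dependence on $\beta$), the right-hand side is nonpositive and $w$ is a subsolution. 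The maximum principle then gives $w(t,\cdot)\le\sup_\xi w(0,\xi)=\|\varphi\|_\infty^2$, and since $w\ge c\,t\,|D_\xi u|^2$ this produces the stated bound $|D_\xi T_\phi(t)\varphi(\xi)|\le\|\varphi\|_\infty/\sqrt{\beta t}$.

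The main obstacle is the justification of the maximum principle on the noncompact space $\Rn$: $w$ need not vanish at infinity, so one cannot argue on $\Rn$ directly. I would handle this as in the uniqueness argument of the excerpt, exploiting the Lyapunov function $g(\xi)=|\xi|^2$ already shown to satisfy $\mathcal{L}_\phi g-\lambda g\le0$. For $C^{2+\alpha}_b$ data $u$ is bounded by contractivity and $D_\xi u$ is bounded by the companion a priori gradient bound, so $w$ is bounded; I would then compare $w$ with $\|\varphi\|_\infty^2+\delta\,e^{\lambda t}|\xi|^2$, note that the latter is a supersolution forcing the difference to attain its maximum at a finite point and at $t=0$, and let $\delta\to0^+$. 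A secondary technical point is the removal of the extra regularity: for general $\varphi\in C_b(\Rn)$ I would take $\varphi_m\in C^{2+\alpha}_b(\Rn)$ with $\|\varphi_m\|_\infty\le\|\varphi\|_\infty$ and $\varphi_m\to\varphi$ locally uniformly, and pass to the limit using the interior Schauder estimates that give local convergence of $D_\xi T_\phi(t)\varphi_m$ to $D_\xi T_\phi(t)\varphi$; the vanishing of the weight at $t=0$ makes the singularity of $|D_\xi u|$ there harmless.
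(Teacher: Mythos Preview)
Your proposal is correct and follows the same Bernstein strategy as the paper: both build the auxiliary function $u^2+(\text{time factor})\,|D_\xi u|^2$, exploit the convexity of $\phi$ and the dissipativity of $\mathcal B$ to make it a subsolution of $D_t-\mathcal L_\phi$, apply a maximum principle, and finish by approximation for general $\varphi\in C_b(\Rn)$. The one genuine difference is how the noncompactness of $\Rn$ is handled. The paper does not stay on $\Rn$: it first reduces to $\varphi\in C^{2+\alpha}_c(\Rn)$ with compact support, then passes to the homogeneous \emph{Neumann} problem on a large ball $B_R\supset\supp\varphi$, applies the classical maximum principle on this bounded domain (checking that $z_R=|v_R|^2+\beta t|D_\xi v_R|^2$ satisfies the right Neumann inequality on $\partial B_R$), and only then lets $R\to\infty$ via interior Schauder convergence of $v_R$ to $T_\phi(t)\varphi$. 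You instead stay on $\Rn$ and appeal to the Lyapunov barrier $\delta e^{\lambda t}|\xi|^2$.

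Both routes are legitimate, but the paper's is slightly cleaner precisely at the point you flag as the ``main obstacle''. Your barrier comparison requires $w=u^2+ct|D_\xi u|^2$ to be bounded (or at least $o(|\xi|^2)$) so that $w-\|\varphi\|_\infty^2-\delta e^{\lambda t}|\xi|^2$ attains an interior maximum; you justify this by invoking a ``companion a priori gradient bound'' for $D_\xi u$, but that bound is itself a maximum-principle statement for $|D_\xi u|^2$ on the whole of $\Rn$ and needs the same kind of growth control, so as written there is a mild circularity. The paper's Neumann-on-$B_R$ reduction sidesteps this entirely, since on a bounded domain no growth hypothesis is needed. If you want to keep your route self-contained, either cite the known uniform gradient bound for $C^1_b$ data (available in the references the paper quotes), or do what the paper does and localise to bounded domains first.
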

\begin{proof}
It suffices to prove the claim for functions $\varphi\in C^{2+\alpha}_c(\Rn)$, i.e., the space of the functions in $C^{2+\alpha}(\R^n)$ with compact support. Indeed, if $\varphi\in C_b(\Rn)$ we can consider a sequence $(\varphi_m)_{m\in\N}$ converging to $\varphi$ locally uniformly as $m$ goes to infinity and use the fact that, up to a subsequence, $T_{\phi}(t)\varphi_m$ converges to $T_\phi(t)\varphi$ in $C^{1,2}_{\rm loc}((0,+\infty)\times \Rn)$, as $m$ goes to infinity (see \cite{Lor17}).
Moreover, taking advantage of the interior Schauder estimates (see \cite{Fri64}), we reduce ourselves to proving the claim for the solution $v_R$ of the homogeneous Neumann Cauchy problem associated with the equation $D_t v=\mathcal L_\phi v$ in $(0,T]\times B_R$, where $B_R$ is the open ball centered at the origin with radius $R$ large enough such that the support of $\varphi$ is contained in $B_R$. Indeed, once \eqref{gra-est} is proved for $v_R$, recalling that $v_R$ converges to $T_\phi(t)\varphi$ in $C^{1,2}_{\rm loc}((0,+\infty)\times \Rn)$ as $R\to +\infty$, we conclude. Therefore, let
$\varphi\in C^{2+\alpha}_c(\Rn)$ and $v_R$ be as specified above. Then the function
\[
z_R(t,\xi):=\abs{v_R(t,\xi)}^2+\beta t\abs{D_\xi v_R(t,\xi)}^2\qquad t>0,\,\xi\in B_R
\]
satisfies $z_R(0,\cdot)=\varphi^2$ in $B_R$, $\langle D_\xi v_R, \nu\rangle \le 0$ ($\nu$ is the unit normal vector) on $(0,T]\times \partial B_R$ and solves the equation
\begin{gather*}
D_t z_R-\mathcal{L}_{\phi}z_R=(\beta-1)\abs{D_\xi v_R}^2
+\gen{\mathcal{B}D_\xi v_R,D_\xi v_R}-\langle D_\xi^2\phi D_\xi v_R,D_\xi v_R\rangle
-\beta t|D_\xi^2 v_R|^2\le 0,
\end{gather*}
in  $(0,T]\times B_R$ (in the last inequality we have used the convexity of $\phi$ and the assumption on the matrix $\mathcal B$).
The classical maximum principle applied to the function $z_R-\|\varphi\|_\infty$ yields the claim in $(0,T]\times B_R$. The arbitrariness of $T$ allows us to extend the claim in the whole $(0,+\infty)\times B_R$.
\end{proof}

The contractivity of $T_{\phi}(t)$ in $C_b(\Rn)$ and estimate \eqref{gra-est} yield some dimension-free uniform estimates for the solution (and its gradient) of the elliptic equation
\begin{gather}\label{prob_finito_ellittico}
\lambda v-\mathcal{L}_{\phi} v= \varphi\in C^2_b(\Rn),\qquad\;\, \lambda >0.
\end{gather}
\begin{prop}\label{stime di schauder}
For any $\lambda >0$ there exists a unique bounded classical solution $v$ of the problem \eqref{prob_finito_ellittico}. Moreover $v$ satisfies
\begin{align}\label{prima_formula}
(i)\; \|v\|_\infty\leq\frac{1}{\lambda}\norm{\varphi}_\infty,\qquad\,\;
(ii)\;\|Dv\|_\infty\leq \sqrt{\frac{\pi}{\beta\lambda}}\norm{\varphi}_\infty.
\end{align}
In addition if $\phi\in C^3(\R^n)$, then $v$ belongs to $C^3_b(\R^n)$.
\end{prop}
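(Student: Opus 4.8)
The plan is to realize $v$ as the Laplace transform (resolvent) of the semigroup $T_\phi(t)$ and then to read the two bounds in \eqref{prima_formula} directly off the contractivity of $T_\phi(t)$ and the gradient estimate \eqref{gra-est}. Explicitly, I would set
\[
v(\xi):=\int_0^{+\infty}e^{-\lambda t}\pa{T_\phi(t)\varphi}(\xi)\,dt,\qquad \xi\in\Rn .
\]
Existence and uniqueness in the class of bounded solutions come essentially for free from the material already in place: the Lyapunov function $g(\xi)=\abs{\xi}^2$ satisfying \eqref{Lyap} forces, through the classical maximum principle, any two bounded solutions of $\lambda v-\mathcal{L}_\phi v=\varphi$ to coincide, while the integral above is precisely the resolvent $R(\lambda,\mathcal{L}_\phi)\varphi$ of the generator of $T_\phi(t)$, so that $\lambda v-\mathcal{L}_\phi v=\varphi$ holds (see \cite{Lor17}). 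Since $\varphi\in C^2_b(\Rn)$ and the coefficients of $\mathcal{L}_\phi$ are locally H\"older continuous (recall $\phi\in C^{2+\alpha}$ and that $\mathcal{B}\xi$ is smooth), interior Schauder estimates promote $v$ to a genuine classical solution in $C^{2+\alpha}_{\rm loc}(\Rn)$.

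Granting this representation, estimate \eqref{prima_formula}(i) is immediate from the contractivity of $T_\phi(t)$ in $C_b(\Rn)$:
\[
\norm{v}_\infty\le\int_0^{+\infty}e^{-\lambda t}\norm{T_\phi(t)\varphi}_\infty\,dt\le\norm{\varphi}_\infty\int_0^{+\infty}e^{-\lambda t}\,dt=\frac{\norm{\varphi}_\infty}{\lambda}.
\]
For \eqref{prima_formula}(ii) I would differentiate under the integral sign — legitimate because \eqref{gra-est} provides the dominating function $e^{-\lambda t}\norm{\varphi}_\infty(\beta t)^{-1/2}$, integrable on $(0,+\infty)$ — and then insert \eqref{gra-est} itself:
\[
\abs{Dv(\xi)}\le\int_0^{+\infty}e^{-\lambda t}\abs{D_\xi\pa{T_\phi(t)\varphi}(\xi)}\,dt\le\frac{\norm{\varphi}_\infty}{\sqrt\beta}\int_0^{+\infty}\frac{e^{-\lambda t}}{\sqrt t}\,dt .
\]
The remaining scalar integral is a Gamma integral: the substitution $s=\lambda t$ yields $\int_0^{+\infty}e^{-\lambda t}t^{-1/2}\,dt=\lambda^{-1/2}\Gamma(1/2)=\sqrt{\pi/\lambda}$, whence $\abs{Dv(\xi)}\le\sqrt{\pi/(\beta\lambda)}\norm{\varphi}_\infty$ uniformly in $\xi$, which is exactly \eqref{prima_formula}(ii). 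Note that both constants are dimension-free, inheriting this feature from \eqref{gra-est} and the contractivity.

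Finally, for the regularity statement, assuming $\phi\in C^3(\Rn)$ the coefficients of $\mathcal{L}_\phi$ become locally $C^{1,\alpha}$, so a single Schauder bootstrap upgrades the solution to $v\in C^{3}_{\rm loc}(\Rn)$. To get boundedness of the derivatives I would differentiate the equation: $u:=D_kv$ solves $\lambda u-\mathcal{L}_\phi u=\psi_k$ with $\psi_k:=D_k\varphi+\sum_j\mathcal{B}_{kj}D_jv-\sum_j (D_{kj}\phi)\,D_jv$, a bounded continuous right-hand side (only the boundedness of $D^2\phi$, already assumed, and the bound on $Dv$ just proved are used here). Since $u$ is bounded it coincides by uniqueness with $R(\lambda,\mathcal{L}_\phi)\psi_k$, so the argument giving \eqref{prima_formula}(ii) applied to $\psi_k$ bounds $\abs{Du}=\abs{DD_kv}$, i.e.\ $D^2v$, uniformly. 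The delicate point — the step I expect to be the real obstacle — is the bound on the third derivatives: differentiating once more produces in the right-hand side a term involving $D^3\phi$, which is \emph{not} controlled by the standing hypothesis that only $D^2\phi$ is bounded. I would handle this either by relying on the local nature of the claim (the $C^3_{\rm loc}$ regularity is what the later computations actually need, the global boundedness being guaranteed by the specific, more regular Moreau--Yosida approximants $\phi$ used in the applications), or by establishing a dedicated Bernstein-type estimate for the second space derivatives of $T_\phi(t)\varphi$ that mirrors the proof of \eqref{gra-est}.
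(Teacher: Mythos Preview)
Your argument for existence, uniqueness, and the two bounds \eqref{prima_formula}(i)--(ii) via the Laplace representation $v=\int_0^\infty e^{-\lambda t}T_\phi(t)\varphi\,dt$ is exactly the paper's route; the Gamma-integral computation you spell out is precisely what lies behind the constant $\sqrt{\pi/(\beta\lambda)}$.

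The divergence is in the $C^3_b$ statement, and your own diagnosis of the obstacle is accurate: iterating the resolvent bound gains only one derivative per step (bounded right-hand side $\Rightarrow$ bounded gradient), so after one differentiation you control $D^2v$, but a second differentiation would bring in $D^3\phi$, which is not assumed bounded. The paper sidesteps this by replacing the resolvent bound with \emph{Schauder} estimates for operators with unbounded coefficients \cite{Lun98}. First, from $\varphi\in C_b^\theta$ one gets $v\in C^{2+\theta}_b$ for every $\theta\in(0,1)$. Then one differentiates the equation \emph{once}, exactly as you do, obtaining
\[
\lambda D_jv-\mathcal L_\phi D_jv=D_j\varphi+(D^2\phi\,Dv)_j+(\mathcal B\,Dv)_j,
\]
but now observes that the right-hand side is not merely bounded but $\alpha$-H\"older and bounded (using $\varphi\in C^2_b$, $v\in C^{2+\theta}_b$, and $\phi\in C^3$ with $D^2\phi$ bounded). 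A second application of the Schauder result then yields $D_jv\in C^{2+\alpha}_b$, hence $v\in C^3_b$, without ever touching $D^3\phi$. In short: Schauder gains two derivatives from H\"older data, whereas the resolvent estimate gains one from bounded data, and that extra derivative is exactly what closes the argument after a single differentiation.

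Your fallback (a) --- that in the actual applications $\phi=\phi_{\varepsilon,n,\eta}\in C^\infty_b$ (Lemma \ref{smooth}), so $D^3\phi$ \emph{is} bounded and your iteration would go through --- is correct in spirit, but the proposition as stated is more general and the paper does prove it at that level of generality via the Schauder route. Your fallback (b), a Bernstein-type bound on second derivatives of $T_\phi(t)\varphi$, is plausible but not carried out; the Schauder argument is the cleaner way in.
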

\begin{proof}
Existence and estimates \eqref{prima_formula} are immediate consequences of the fact that
\begin{equation*}
v(\xi)=\int_0^{+\infty} e^{-\lambda t} (T_\phi(t)\varphi)(\xi)dt\qquad \xi\in\R^n.
\end{equation*}
(see \cite[Propositions 3.2 \& 3.4]{BF04} and \cite[Proposition 3.6]{Pri99}) and estimate \eqref{gra-est}.

Concerning the last statement, we just need to prove that the third order derivatives of $v$ are bounded. Indeed, the classical theory of elliptic equations guarantees that $v$ belongs to $C^{3}(\R^n)$. Moreover \cite[Theorem 1]{Lun98} yields that $u$ belongs to $C^{2+\theta}_b(\R^n)$ for every $0<\theta<1$ and
$\|v\|_{C^{2+\theta}_b(\Rn)}\le C\|\varphi\|_{C_b^\theta(\Rn)}$ for some positive constant $C$ independent of $\varphi$. Thus, we can differentiate \eqref{prob_finito_ellittico} and obtain
\begin{equation}\label{vetril}
\lambda D_jv-\mathcal{L}_{\phi} D_jv=D_j \varphi+(D^2\phi Dv)_j+(\mathcal{B} Dv)_j,\qquad\,\,
\end{equation}
for any $j=1, \ldots, n.$ Thus, taking into account that the right hand side of \eqref{vetril} is $\alpha$-H\"older continuous and bounded we can apply again \cite[Theorem 1]{Lun98} to deduce that $D_jv\in C^{2+\alpha}_b(\R^n)$ for every $j=1,\ldots,n$. In particular $v$ belongs to $C^3_b(\R^n)$.
\end{proof}

\subsection{Back to the infinite dimension}
Here we apply the results of the previous subsection with $\mathcal B={\rm diag}(-\lambda_1^{-1},\ldots,-\lambda_n^{-1})$ and $\beta=\lambda_1^{-1}$  (see \eqref{qinfty} for the definition of $(\lambda_i)_{i\in \N}$). Moreover, we focus on the  term $\langle D_HU, D_H\rangle_H$ in the operator $L$ in \eqref{operator}. We introduce some functions that, in some sense, reduce $U$ from infinite dimension to finite dimension and that contain a penalisation term which allows us to localise the problem in $\Om$.
More precisely, we define $\Phi_\eps:X\to \R$ and $\phi_{\ve,n}:\Rn\to\R$, respectively,  by
\[
\Phi_\eps(x):=U_\eps(x)+\frac{1}{2\eps}d^2_\Omega(x),\qquad
\phi_{\ve,n}(\xi):=(\E_n\Phi_\eps)(\Sigma_n\xi) \quad\; x\in X,\ \xi \in \Rn,\ n\in \N,\ \eps>0,
\]
where $U_\eps$ is the Moreau-Yosida approximation of $U$ along $H$ (see \eqref{M env}) and $\Sigma_n:\Rn\to X$ is the embedding function defined in Section \ref{prelim-sect}.

In order to apply the finite dimensional results obtained in Subsection \ref{sect-finite} we need also to regularise the function $\phi_{\ve,n}$. To do this we consider $\phi_{\ve,n,\eta}:\Rn\to \R$, the convolution of $\phi_{\ve,n}$ with a standard mollifier $\rho_\eta$.

First we state some properties of the functions just introduced. In the following statement we just need to show that the function $\phi_{\ve,n, \eta}$ belongs to $C^{2+\alpha}_b(\R^n)$ for any $\alpha \in(0,1)$.
\begin{lemm}\label{smooth}
For every $\eps,\eta>0$ and $n\in\N$, the function $\phi_{\ve,n, \eta}$ belongs to $C_b^\infty(\R^n)$.
\end{lemm}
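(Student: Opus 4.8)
The plan is to separate the two requirements hidden in the claim $\phi_{\ve,n,\eta}\in C_b^\infty(\R^n)$: smoothness, which I would obtain entirely from the convolution with $\rho_\eta$, and the uniform control of every derivative, which I would trace back through the construction to the Moreau--Yosida and distance estimates of Section \ref{prelim-sect}. First I would show that $\Phi_\ve=U_\ve+\tfrac{1}{2\ve}d_\Omega^2$ has an $H$-Lipschitz Malliavin gradient. By Proposition \ref{moreau-prop}(ii) the gradient $D_HU_\ve$ is $H$-Lipschitz continuous on $X$, while Proposition \ref{prop_dist} gives that $D_Hd_\Omega^2$ is $H$-Lipschitz with constant at most $2$; hence $D_H\Phi_\ve$ is $H$-Lipschitz with
\[
[D_H\Phi_\ve]_{H\text{-Lip}}\le [D_HU_\ve]_{H\text{-Lip}}+\ve^{-1}.
\]
Equivalently, $D_H^2\Phi_\ve$ exists $\nu$-a.e. and is bounded in $\mathcal H_2$, uniformly in $x\in X$.

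Next I would transfer this second-order bound to $\R^n$. By Proposition \ref{convergenza En f} the conditional expectation commutes with the first $n$ directional derivatives, $D_i\E_n\Phi_\ve=\E_nD_i\Phi_\ve$ for $i\le n$, and $\E_n$ does not increase sup-norms, so $\E_n\Phi_\ve$ retains the same uniform bound on its second $H$-derivative and the $H$-Lipschitz constant of its gradient does not grow. Since $|\Sigma_n\xi|_H=|\xi|$, the embedding $\Sigma_n$ is a linear isometry of $\R^n$ onto $\Sigma_n(\R^n)\subseteq H$, so composing with $\Sigma_n$ turns the $H$-Lipschitz bound into an ordinary Lipschitz bound for the Euclidean gradient: $\phi_{\ve,n}=(\E_n\Phi_\ve)\circ\Sigma_n$ is differentiable on $\R^n$ with a globally Lipschitz gradient, that is $\phi_{\ve,n}\in C^{1,1}(\R^n)$ with $\|D^2\phi_{\ve,n}\|_\infty\le [D_H\Phi_\ve]_{H\text{-Lip}}<\infty$.

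Finally I would mollify. Writing $\phi_{\ve,n,\eta}=\phi_{\ve,n}*\rho_\eta$ with $\rho_\eta\in C_c^\infty(\R^n)$, the function is automatically $C^\infty$, and for every multi-index $\alpha$ with $|\alpha|\ge 2$ I would move exactly two derivatives onto $\phi_{\ve,n}$ and the rest onto the mollifier,
\[
D^\alpha\phi_{\ve,n,\eta}=\big(D^\beta\phi_{\ve,n}\big)*\big(D^{\alpha-\beta}\rho_\eta\big),\qquad |\beta|=2,\ \beta\le\alpha,
\]
so that Young's inequality gives $\|D^\alpha\phi_{\ve,n,\eta}\|_\infty\le\|D^2\phi_{\ve,n}\|_\infty\,\|D^{\alpha-\beta}\rho_\eta\|_{L^1(\R^n)}<\infty$, the kernel derivatives being integrable since $\rho_\eta$ has compact support. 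The finitely many bounds for $|\alpha|\le 1$ on $\phi_{\ve,n,\eta}$ and its gradient are elementary, and collecting all of them over $\alpha$ yields $\phi_{\ve,n,\eta}\in C_b^\infty(\R^n)$.

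I expect the only genuine difficulty to be the $C^{1,1}$ step, namely producing an everywhere-defined, uniformly bounded Euclidean Hessian for $\phi_{\ve,n}$ out of the merely $\nu$-a.e.\ second $H$-differentiability of $\Phi_\ve$: one must verify that applying $\E_n$ and restricting along $\Sigma_n$ neither destroys the Lipschitz character of the gradient nor inflates its constant. Once the single bound $\|D^2\phi_{\ve,n}\|_\infty<\infty$ is secured, the passage to $C_b^\infty(\R^n)$ is the purely mechanical convolution estimate above.
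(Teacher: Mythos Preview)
Your approach is the paper's: show $D_H\Phi_\ve$ is $H$-Lipschitz via Propositions \ref{prop_dist} and \ref{moreau-prop}(ii), transfer this to an $L^\infty$ bound on $D^2\phi_{\ve,n}$ (the paper invokes Rademacher's theorem explicitly; you phrase it as $\phi_{\ve,n}\in C^{1,1}$), and then mollify. You are in fact more explicit than the paper about the higher-order bounds, splitting the convolution so that two derivatives land on $\phi_{\ve,n}$ and the rest on $\rho_\eta$; the paper handles only $|\alpha|=2$ and dispatches the rest with ``similar arguments''. One caution: your throwaway line that the $|\alpha|\le 1$ bounds are ``elementary'' is not obviously true, since $\phi_{\ve,n}$ (and hence $\phi_{\ve,n,\eta}$) need not be bounded if, say, $U$ grows quadratically---but the paper's statement and proof share this imprecision, and as the paper notes just before the lemma, only the boundedness of the second (and higher) derivatives is actually used downstream.
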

\begin{proof}
Clearly, $\phi_{\ve,n,\eta}$ belongs to $C^\infty(\Rn)$. Let us show that $D^2\phi_{\eps,n,\eta}$ is bounded in
$\Rn$. Propositions \ref{prop_dist} and \ref{moreau-prop}(ii) guarantee that $\Phi_\eps$ is $H$-differentiable and $D_H \Phi_\eps$ is $H$-Lipschitz continuous in $X$. The same holds true in $\Rn$ for the functions $\phi_{\ve,n}$. Rademacher's theorem yields that $D\phi_{\ve,n}$ is differentiable $\mathcal{L}^n$-a.e. and $D^2\phi_{\ve,n}$ is $\mathcal{L}^n$-essentially bounded. This implies that $D^2 \phi_{\ve,n,\eta}$ are bounded in $\Rn$. With similar arguments it follows that $\phi_{\ve,n,\eta}\in C_b^{\infty}(\Rn)$.
\end{proof}

\begin{lemm}\label{costr_eta_n}
Let $\eps>0$. There exists a infinitesimal sequence $(\eta_n)_{n\in\N}$ such that
\begin{gather}
\lim_{n\ra+\infty}D_H\Phi_{\ve,n,\eta_n}=D_H\Phi_{\ve},\label{phone1}\\
\lim_{n\ra+\infty}D^2_H\Phi_{\ve,n,\eta_n}=D^2_H\Phi_{\ve},\label{phone2}
\end{gather}
where $\Phi_{\ve,n,\eta_n}(x):=\phi_{\ve,n,\eta_n}(\Pi_n x)$ for any $x\in X$. The limits in \eqref{phone1} and \eqref{phone2} are taken in $L^2(X,\nu_\eps;H)$ and $L^2(X,\nu_\eps;\mathcal{H}_2)$, respectively, and $\nu_\ve$ is the measure $e^{-\Phi_\eps}\gamma$.
\end{lemm}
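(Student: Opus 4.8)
The plan is to treat separately the two operations that produce $\Phi_{\eps,n,\eta}$ from $\Phi_\eps$, namely the conditional expectation $\E_n$ (which reduces $\Phi_\eps$ to the $n$ variables $\Pi_n x$) and the mollification by $\rho_\eta$, and then to glue the two limits by a diagonal selection. Let me single out the unmollified cylindrical approximant $\Phi_{\eps,n}(x):=\phi_{\eps,n}(\Pi_n x)=\E_n\Phi_\eps(x)$. Inserting it as an intermediate term gives
\[
\norm{D_H\Phi_{\eps,n,\eta}-D_H\Phi_\eps}_{L^2(X,\nu_\eps;H)}\le\norm{D_H\Phi_{\eps,n,\eta}-D_H\E_n\Phi_\eps}_{L^2(X,\nu_\eps;H)}+\norm{D_H\E_n\Phi_\eps-D_H\Phi_\eps}_{L^2(X,\nu_\eps;H)},
\]
together with the analogous splitting for the Hessians in $L^2(X,\nu_\eps;\mathcal H_2)$; I shall make the first summand small by choosing $\eta$ small at fixed $n$, and the second small by letting $n\to+\infty$.

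For the second summand I would first verify that $\Phi_\eps=U_\eps+\tfrac{1}{2\eps}d_\Om^2\in D^{2,p}(X,\gamma)$ for every $p\in[1,\infty)$: indeed $U_\eps$ lies there by Proposition \ref{moreau-prop}(iii),(v) because $U\in L^p(X,\gamma)$ for all $p$, while for $d_\Om^2$ one combines Proposition \ref{prop_dist} with the fact that $D_Hd_\Om^2$ is $H$-Lipschitz (hence of at most linear $H$-growth) and $D^2_Hd_\Om^2$ is bounded in operator norm, so that Fernique's theorem furnishes the $\gamma$-integrability of the relevant norms. Proposition \ref{convergenza En f} then gives $\E_n\Phi_\eps\to\Phi_\eps$ in $D^{2,p}(X,\gamma)$ for every $p$, that is $D_H\E_n\Phi_\eps\to D_H\Phi_\eps$ in $L^p(X,\gamma;H)$ and $D^2_H\E_n\Phi_\eps\to D^2_H\Phi_\eps$ in $L^p(X,\gamma;\mathcal H_2)$. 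To pass to the weighted measure I would use that, since $U$ is convex and $d_\Om^2\ge0$, the Moreau--Yosida inequality bounds $\Phi_\eps$ from below by an affine function, whence $e^{-\Phi_\eps}\in L^q(X,\gamma)$ for all $q$ by Fernique's theorem; H\"older's inequality applied to $\int_X|\,\cdot\,|^2 e^{-\Phi_\eps}d\gamma$ with a $\gamma$-convergence of order $2s$ ($s>1$) and $e^{-\Phi_\eps}\in L^{s'}(X,\gamma)$ then upgrades the convergence to $L^2(X,\nu_\eps;H)$ and $L^2(X,\nu_\eps;\mathcal H_2)$, so the second summand vanishes as $n\to+\infty$.

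For the first summand I fix $n$ and let $\eta\to0^+$. By Lemma \ref{smooth} and Rademacher's theorem $\phi_{\eps,n}\in C^{1,1}(\Rn)$ with Hessian bounded $\mathcal L^n$-a.e., so $D\phi_{\eps,n,\eta}=(D\phi_{\eps,n})*\rho_\eta\to D\phi_{\eps,n}$ locally uniformly and $D^2\phi_{\eps,n,\eta}=(D^2\phi_{\eps,n})*\rho_\eta\to D^2\phi_{\eps,n}$ $\mathcal L^n$-a.e., both with bounds uniform in $\eta$. Since $\Phi_{\eps,n,\eta}$ is cylindrical, its $H$-gradient and $H$-Hessian are supported on $\gen{e_1,\dots,e_n}$ and are given by the Euclidean derivatives $D\phi_{\eps,n,\eta}$, $D^2\phi_{\eps,n,\eta}$ evaluated at $\Pi_n x$; as the law of $\Pi_n$ under $\nu_\eps$ is absolutely continuous with respect to $\mathcal L^n$, dominated convergence yields, for each fixed $n$,
\[
\lim_{\eta\to0^+}\norm{D_H\Phi_{\eps,n,\eta}-D_H\E_n\Phi_\eps}_{L^2(X,\nu_\eps;H)}=0,\qquad\lim_{\eta\to0^+}\norm{D^2_H\Phi_{\eps,n,\eta}-D^2_H\E_n\Phi_\eps}_{L^2(X,\nu_\eps;\mathcal H_2)}=0.
\]
A diagonal choice then concludes: for each $n$ pick $\eta_n\le 1/n$ so small that both quantities above are at most $1/n$; then $\eta_n\to0$, and combining with the convergence of the second summand gives \eqref{phone1} and \eqref{phone2}.

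I expect the principal difficulty to lie in the second step, i.e. in the change of measure. One must first secure enough Gaussian integrability of $D_H\Phi_\eps$ and $D^2_H\Phi_\eps$ — navigating between the three mutually equivalent but genuinely distinct Sobolev scales attached to $\gamma$, $\nu$ and $\nu_\eps$, the term $d_\Om^2$ being known a priori only in $D^{2,p}(X,\nu)$ — and then convert $L^p(\gamma)$-convergence into $L^2(\nu_\eps)$-convergence against the \emph{unbounded} density $e^{-\Phi_\eps}$. The affine lower bound on $\Phi_\eps$, coming from the convexity of $U$ and the non-negativity of $d_\Om^2$, together with Fernique's theorem, is precisely what makes this transfer and the required H\"older estimate go through.
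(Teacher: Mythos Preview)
Your proposal is correct and follows the same architecture as the paper: the same triangle-inequality splitting through $\E_n\Phi_\eps$, the same use of Proposition~\ref{convergenza En f} to handle $\E_n\Phi_\eps\to\Phi_\eps$ in $D^{2,p}(X,\gamma)$, and the same H\"older/Fernique transfer from $\gamma$ to $\nu_\eps$. The only substantive difference is in the mollification-of-Hessian step. The paper rewrites $D^2_H\E_n\Phi_\eps-D^2_H\Phi_{\eps,n,\eta}$ as an average of increments $D^2_H\Phi_\eps(\Gamma_n(x,y))-D^2_H\Phi_\eps(\Gamma_n(x,y)-\eta\Sigma_n\xi)$ and invokes the $H$-continuity of $D^2_H\Phi_\eps$ (Hypothesis~\ref{ipo_convex} together with Proposition~\ref{moreau-prop}(v)) plus the $\gamma$-essential boundedness of $D^2_H\Phi_\eps$ to apply dominated convergence. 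You instead stay in $\R^n$, use that $D^2\phi_{\eps,n}\in L^\infty(\R^n)$ so that $(D^2\phi_{\eps,n})*\rho_\eta\to D^2\phi_{\eps,n}$ at Lebesgue points, and pull this back via the absolute continuity of $(\Pi_n)_*\nu_\eps$ with respect to $\mathcal L^n$. Your route is slightly more elementary for this particular step (it does not appeal to the $H$-continuity clause of Hypothesis~\ref{ipo_convex}), at the cost of the extra observation about the pushforward; the paper's route makes transparent why Hypothesis~\ref{ipo_convex} is the natural standing assumption. A minor further difference: the paper actually shows that \eqref{phone1} holds for \emph{every} infinitesimal sequence $(\eta_n)$ (via the explicit Lipschitz bound $\norm{D_H\E_n\Phi_\eps-D_H\Phi_{\eps,n,\eta}}_{L^2(\nu_\eps;H)}^2\le[D_H\Phi_\eps]_{H\text{-Lip}}^2\,\nu_\eps(X)\,\eta$), so only \eqref{phone2} requires the diagonal choice; you run the diagonal for both, which is of course sufficient.
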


\begin{proof}
Throughout this proof, for any $n\in\N$ and $x,y\in X$ we set $\Gamma_n(x,y):=P_nx+(I-P_n)y$. We start by proving that \eqref{phone1} holds true for every infinitesimal sequence $(\eta_n)_{n\in\N}$. To this aim, let $(\eta_n)_{n\in\N}$ be an infinitesimal sequence. Then
\begin{align}
\int_X\abs{D_H\Phi_{\ve}-D_H\Phi_{\ve,n,\eta_n}}_H^2d\nu_\eps\leq&\,
2\int_X\abs{D_H \Phi_{\ve}-D_H\E_n\Phi_{\ve}}_H^2
+\abs{D_H \E_n\Phi_{\ve}-D_H\Phi_{\ve,n,\eta_n}}_H^2 d\nu_\eps
\notag\\
\leq&\,  2\pa{\int_X e^{-p'\Phi_\eps}d\gamma}^{\frac{1}{p'}}
\pa{\int_X\abs{D_H \Phi_{\ve}-D_H\E_n\Phi_{\ve}}_H^{2p}d\gamma}^{\frac{1}{p}}
\notag\\
&+2\int_X\abs{D_H \E_n\Phi_{\ve}-D_H\Phi_{\ve,n,\eta_n}}_H^{2}d\nu_\eps.\label{ach}
\end{align}
Since $D_H d^2_\Om$ is $H$-Lipschitz continuous in $X$, the function $\Phi_{\ve}$ belongs to $D^{1,q}(X,\gamma)$ for$q\in[1,\infty)$. Thus Proposition \ref{convergenza En f} yields that the second line in \eqref{ach} vanishes as $n$ goes to infinity. Now
\begin{align}\label{ecotekne}
&\int_X\abs{D_H \E_n\Phi_{\ve}-D_H\Phi_{\ve,n,\eta_n}}_H^{2}d\nu_\eps\notag\\
&=\int_X\sum_{i=1}^n\Big(\int_XD_i\Phi_\eps(\Gamma_n(x,y))d\gamma(y)-\int_X\pa{\int_{\R^n}D_i\Phi_\eps\pa{\Gamma_n(x,y)-\eta_n(\Sigma_n\xi)}\rho(\xi)d\xi}d\gamma(y)\Big)^2d\nu_\eps(x)\notag\\
&\leq \int_X\int_{\R^n}\pa{\int_X\abs{D_H\Phi_\eps(\Gamma_n(x,y))-D_H\Phi_\eps\pa{\Gamma_n(x,y)-\eta_n(\Sigma_n(\xi)}}_H^2d\gamma(y)}\rho(\xi)d\xi d\nu_\eps(x)\notag\\
&\leq [D_H\Phi_\eps]_{H\text{-Lip}}^2\nu_\eps(X) \eta_n\int_{\R^n}\abs{\xi}^2\rho(\xi)d\xi \leq  [D_H\Phi_\eps]_{H\text{-Lip}}^2\nu_\eps(X)\eta_n,
\end{align}
and the right-hand side of \eqref{ecotekne} vanishes as $n\to +\infty$.

Now we prove \eqref{phone2}. Propositions \ref{prop_dist} and \ref{moreau-prop}(iii) guarantee that $\Phi_{\ve}$ belong to $D^{2,p}(X,\gamma)$ for any $p\in[1,\infty)$ and by Proposition \ref{convergenza En f} we immediately get that $D^2_H\E_n\Phi_{\ve}$ converges to $D^2_H \Phi_{\ve}$ in $L^2(X,\nu_{\ve};\mathcal H_2)$ as $n\to +\infty$.

In view of this fact, arguing as in \eqref{ach}, it remains to prove the existence of a vanishing sequence $(\eta_n)_{n\in \N}$ such that
$(D^2_H \E_n\Phi_{\ve}-D^2_H\Phi_{\ve,n,\eta_n})_{n\in\N}$ is infinitesimal in $L^2(X,\nu_{\ve};\mathcal H_2)$ as $n$ goes to infinity.

We start by showing that for any $n\in \N$ the function $D^2_H\Phi_{\ve,n,\eta}$ converges to $D^2_H\E_n\Phi_{\ve}$ in $L^2(X,\nu_{\ve};\mathcal H_2)$ as $\eta\to 0^+$.
To this aim, we can argue as in \eqref{ecotekne} and deduce that
\begin{align*}
&\int_X|D^2_H \E_n\Phi_{\ve}-D^2_H\Phi_{\ve,n,\eta}|_{\mathcal H_2}^{2}d\nu_\eps\\
&\leq \int_X\int_{\R^n}\pa{\int_X|D^2_H\Phi_\eps(\Gamma_n(x,y))-D^2_H\Phi_\eps\pa{\Gamma_n(x,y)-\eta(\Sigma_n\xi)}|_{\mathcal H_2}^2d\gamma(y)}\rho(\xi)d\xi d\nu_\eps(x).
\end{align*}
By Hypotheses \ref{ipotesi peso}, \ref{ipo_convex} and Proposition \ref{moreau-prop}(v), the function $D^2_H\Phi_\ve$ is $H$-continuous. This guarantees that the integrand function vanishes as $\eta\to 0$. Moreover, as $D^2_H\Phi_\eps$ is $\gamma$-essentially bounded in $X$, we can estimate the integrand function by a constant independent of $\eta$ and apply the dominated convergence theorem to conclude.

\noindent
Now, a diagonal argument yields an infinitesimal sequence satisfying \eqref{phone2} (and \eqref{phone1}, too).
\noindent
We start by letting $\eta_1$ be such that
\[\norm{D^2_H \E_1\Phi_{\ve}-D^2_H\Phi_{\ve,1,\eta_1}}_{L^2(X,\nu_\eps,\mathcal{H}_2)}<1.\]
Proceeding by induction, for every $n \ge 1$, we take $\eta_{n+1}$ in such a way that $\eta_{n+1}<\eta_n$ and
\[\norm{D^2_H \E_{n+1}\Phi_{\ve}-D^2_H\Phi_{\ve,n+1,\eta_{n+1}}}_{L^2(X,\nu_\eps,\mathcal{H}_2)}<\frac{1}{2^{n}}.\]
Thus, let $\eps>0$ and $\bar{n}\in\N$ be such that $1<2^{\bar{n}-1}\eps$ and $\norm{D^2_H\Phi_\eps-D^2_H \E_{n}\Phi_{\ve}}_{L^2(X,\nu_\eps,\mathcal{H}_2)}<\frac{\eps}{2}$ for any $n\geq \bar{n}$.  Then for $n\geq \bar{n}$
\begin{align*}
\norm{D^2_H\Phi_\eps-D^2_H\Phi_{\ve,n,\eta_n}}_{L^2(X,\nu_\eps,\mathcal{H}_2)}\leq&\norm{D^2_H\Phi_\eps-D^2_H\E_n\Phi_{\ve}}_{L^2(X,\nu_\eps,\mathcal{H}_2)}\\
&+\norm{D^2_H\E_n\Phi_\eps-D^2_H\Phi_{\ve,n,\eta_n}}_{L^2(X,\nu_\eps,\mathcal{H}_2)}\leq \frac{\eps}{2}+\frac{\eps}{2}=\eps.
\end{align*}
So the proof is complete.
\end{proof}

Now, let $f\in \mathcal F C^\infty_b(X)$ and $\psi \in C_b^\infty(\R^{n_0})$ for some $n_0 \in \N$ be such that $f(x)=\psi(\Pi_{n_0}x)$ for any $x \in X$. Proposition \ref{stime di schauder} and Lemma \ref{smooth} allow us to consider $v_{\ve,n,\eta_n}$, ($n \geq n_0$), the unique solution of \eqref{prob_finito_ellittico} with $\phi$ replaced by $\phi_{\ve,n,\eta_n}$ and $\varphi$ replaced by $\psi$.

In order to come back to the infinite dimensional setting we define
\begin{align*}
\Phi_{\ve,n}(x):=\phi_{\ve,n,\eta_n}(\Pi_n x),\qquad\; V_{\ve,n}(x):=v_{\ve,n,\eta_n}(\Pi_n x),\qquad\;\, x \in X,\ \varepsilon >0,\ n\geq n_0
\end{align*}
where $(\eta_n)_{n\in\N}$ is the sequence of Lemma \ref{costr_eta_n}. Now we consider the operator $L_\ve$ defined as
\begin{align*}
D(L_{\eps})=\bigg\{u\in D^{1,2}(X,\nu_\eps)\,\bigg|\,& \text{there exists\,\,} v_u\in L^2(X,\nu_\eps)\text{ such that }
\notag\\
&\left.\int_X\gen{D_H u,D_H \varphi}_Hd\nu_\eps=-\int_X v_u\varphi d\nu_\eps
\text{ for every }\varphi\in\mathcal{F}C^\infty_b(\Om)\right\},
\end{align*}
with $L_{\eps}u:=v_u$ if $u\in D(L_{\eps})$. We remark that $L_\eps$ acts on smooth cylindrical functions $\varphi$ as follows
\begin{equation}\label{operator_veps}
L_\eps\varphi={\rm Tr}(D^2_H \varphi)-\sum_{i=1}^{+\infty}\lambda_i^{-1}\langle x,e_i\rangle D_i\varphi-\langle D_H\Phi_\eps,D_H\varphi\rangle_H.
\end{equation}

\begin{rmk}\label{remark_Teps}
{\rm Note that formulas \eqref{int by parts} and \eqref{weak form} hold true also with $\nu$, $L$ and $U$ replaced by $\nu_\eps$, $L_\eps$ and $\Phi_\eps$, respectively. The same arguments listed after Hypothesis \ref{ipotesi peso} allow us to define the spaces $D^{k,p}(X,\nu_\eps)$ for any $\eps>0$, $p\in(1,\infty)$ and $k=1,2$. Moreover, if $(T_{\eps}(t))_{t\ge 0}$ is the analytic semigroup generated by the operator $L_\eps$ in $L^2(X,\nu_{\eps})$, then all the properties listed in Proposition \ref{prop_TOm} for $T_\Omega(t)$, hold true for $T_\eps(t)$, too.}
\end{rmk}

\begin{prop}\label{mannggia}
The function $V_{\ve,n}$ belongs to $\mathcal{F}C^3_b(X)$ and solves
\begin{align}
\lambda V_{\ve,n}-L_\ve V_{\ve,n}=f+\gen{D_H \Phi_\eps-D_H \Phi_{\ve,n},D_H V_{\ve,n}}_H=: f_n,\qquad\;\, \lambda>0.\label{Equazione soddisfatta da perturbazione}
\end{align}
Moreover $f_n$ converges to $f$ in $L^2(X,\nu_\eps)$ and $D_Hf_n$ converges to $D_Hf$ in $L^1(X,\nu_\eps,H)$, as $n$ goes to infinity.
\end{prop}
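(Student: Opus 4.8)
The plan is to transfer everything from the finite-dimensional elliptic solution $v_{\ve,n,\eta_n}$ to $X$ through the cylindrical structure, exploiting that all the estimates of Subsection \ref{sect-finite} are dimension-free. First, by Lemma \ref{smooth} we have $\phi_{\ve,n,\eta_n}\in C_b^\infty(\R^n)\subseteq C^3(\R^n)$, so the last assertion of Proposition \ref{stime di schauder} gives $v_{\ve,n,\eta_n}\in C^3_b(\R^n)$; hence $V_{\ve,n}=v_{\ve,n,\eta_n}(\Pi_n\cdot)$ is a smooth cylindrical function, i.e. $V_{\ve,n}\in\mathcal{F}C^3_b(X)$. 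To obtain \eqref{Equazione soddisfatta da perturbazione} I would compute $L_\eps V_{\ve,n}$ directly from \eqref{operator_veps}: since $V_{\ve,n}$ depends only on the first $n$ coordinates, $D_iV_{\ve,n}=D_{ij}V_{\ve,n}=0$ as soon as $\max\{i,j\}>n$, so only finitely many terms of the series survive, and the chain rule relating the Malliavin derivatives of a $\Pi_n$-cylindrical function to the Euclidean derivatives of its profile identifies $L_\eps$, on such functions, with the lift of the finite-dimensional operator $\mathcal{L}_{\phi_{\ve,n,\eta_n}}$ — the only discrepancy being that $L_\eps$ carries the genuine weight $D_H\Phi_\eps$ whereas $\mathcal{L}_{\phi_{\ve,n,\eta_n}}$ carries its finite-dimensional version $D_H\Phi_{\ve,n}$. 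Collecting this discrepancy gives $L_\eps V_{\ve,n}=(\mathcal{L}_{\phi_{\ve,n,\eta_n}}v_{\ve,n,\eta_n})(\Pi_n\cdot)+\gen{D_H\Phi_{\ve,n}-D_H\Phi_\eps,D_HV_{\ve,n}}_H$, and since $v_{\ve,n,\eta_n}$ solves $\lambda v-\mathcal{L}_{\phi_{\ve,n,\eta_n}}v=\psi$ with $f=\psi(\Pi_n\cdot)$, identity \eqref{Equazione soddisfatta da perturbazione} follows at once.

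For the convergence $f_n\to f$ in $L^2(X,\nu_\eps)$ I would bound the correction term by the Cauchy--Schwarz inequality in $H$,
\[
|f_n-f|=\abs{\gen{D_H\Phi_\eps-D_H\Phi_{\ve,n},D_HV_{\ve,n}}_H}\le |D_H\Phi_\eps-D_H\Phi_{\ve,n}|_H\,|D_HV_{\ve,n}|_H .
\]
The decisive observation is that $|D_HV_{\ve,n}|_H$ is bounded uniformly in $n$: the dimension-free gradient estimate \eqref{prima_formula}(ii) bounds $\|Dv_{\ve,n,\eta_n}\|_\infty$ by $\|\psi\|_\infty=\|f\|_\infty$ independently of $n$, and through the chain rule this controls $|D_HV_{\ve,n}|_H$ by a dimension-free constant. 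Pulling this bound out of the integral leaves $\|f_n-f\|_{L^2(\nu_\eps)}\le C\|D_H\Phi_\eps-D_H\Phi_{\ve,n}\|_{L^2(\nu_\eps;H)}$, which tends to $0$ by \eqref{phone1} (recall $\Phi_{\ve,n}=\Phi_{\ve,n,\eta_n}$). In particular $(\|f_n\|_{L^2(\nu_\eps)})_n$ stays bounded, a fact that feeds into the last step.

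The delicate point is the convergence $D_Hf_n\to D_Hf$ in $L^1(X,\nu_\eps;H)$. Differentiating the correction term by the product rule for $\gen{\cdot,\cdot}_H$, legitimate because $V_{\ve,n}\in\mathcal{F}C^3_b(X)$ and $\Phi_\eps$ is twice $H$-differentiable $\nu_\eps$-a.e. by Propositions \ref{prop_dist} and \ref{moreau-prop}, gives
\[
D_Hf_n-D_Hf=(D^2_H\Phi_\eps-D^2_H\Phi_{\ve,n})D_HV_{\ve,n}+(D^2_HV_{\ve,n})(D_H\Phi_\eps-D_H\Phi_{\ve,n}).
\]
The first summand is treated as in the previous step: the uniform bound on $|D_HV_{\ve,n}|_H$ controls its $L^1$-norm by $\|D^2_H\Phi_\eps-D^2_H\Phi_{\ve,n}\|_{L^1(\nu_\eps;\mathcal{H}_2)}$, which vanishes by \eqref{phone2} and the finiteness of $\nu_\eps$. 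The second summand is the main obstacle, because $D^2_HV_{\ve,n}$ is \emph{not} uniformly bounded pointwise — the finite-dimensional estimates control only $v_{\ve,n,\eta_n}$ and its first derivatives — so it cannot be pulled out of the integral. The way around this is to note that $V_{\ve,n}=R(\lambda,L_\eps)f_n$, so that the second-order resolvent estimate \eqref{stime4}, valid for $L_\eps$ by Remark \ref{remark_Teps}, yields $\|D^2_HV_{\ve,n}\|_{L^2(\nu_\eps;\mathcal{H}_2)}\le\sqrt2\,\|f_n\|_{L^2(\nu_\eps)}$, a bound uniform in $n$ by the previous paragraph. A further Cauchy--Schwarz inequality then estimates the $L^1$-norm of the second summand by $\|D^2_HV_{\ve,n}\|_{L^2(\nu_\eps;\mathcal{H}_2)}\,\|D_H\Phi_\eps-D_H\Phi_{\ve,n}\|_{L^2(\nu_\eps;H)}$, whose second factor vanishes by \eqref{phone1}. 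Thus the whole argument hinges on the dimension-freeness of the finite-dimensional bounds, which lets both the sup-bound on $D_HV_{\ve,n}$ and the $L^2$-bound on its Hessian survive the limit $n\to\infty$.
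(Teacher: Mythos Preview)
Your argument is correct and, for the first three points (regularity of $V_{\ve,n}$, identity \eqref{Equazione soddisfatta da perturbazione}, and $f_n\to f$ in $L^2(X,\nu_\eps)$), it matches the paper's proof essentially line by line.

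The genuinely different step is how you obtain the uniform $L^2$-bound on $D^2_HV_{\ve,n}$, which is the crux of the $L^1$-convergence $D_Hf_n\to D_Hf$. The paper does not invoke the resolvent estimate; instead it differentiates \eqref{Equazione soddisfatta da perturbazione} along each $e_j$, multiplies by $D_jV_{\ve,n}$, sums, uses the convexity of $\Phi_\eps$ to discard the sign-definite terms, and then integrates by parts against $\nu_\eps$. After a Young-inequality absorption this yields a uniform bound $\|D^2_HV_{\ve,n}\|_{L^2(X,\nu_\eps;\mathcal H_2)}\le C(\eps)$ directly, with the constant controlled by $\|D_Hf\|_\infty$ and the vanishing quantities in \eqref{phone1}--\eqref{phone2}. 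Your route --- recognising $V_{\ve,n}=R(\lambda,L_\eps)f_n$ and applying the second-order resolvent estimate --- is shorter and conceptually cleaner, but it presupposes that the full $D^{2,2}$-regularity theory for $L_\eps$ (i.e.\ the analogue of \eqref{stime4}) is already in hand via Remark \ref{remark_Teps}. The paper's energy computation is self-contained: it extracts the Hessian bound from convexity alone, without appealing to the abstract resolvent theory. Note, too, that the paper actually uses Proposition \ref{mannggia} to deduce that $(0,\infty)$ lies in the resolvent set of $L_\eps$ (the sentence preceding Proposition \ref{monedero-1}), so your shortcut sits slightly uneasily with the paper's logical ordering --- though, as you say, Remark \ref{remark_Teps} taken at face value dissolves any circularity.
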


\begin{proof}
The fact that $V_{\ve,n}$ belongs to $\mathcal FC^3_b(X)$ follows from Proposition \ref{stime di schauder} and Lemma \ref{smooth}. In order to obtain \eqref{Equazione soddisfatta da perturbazione} we recall that $v_{\eps,n,\eta_n}(\xi)=V_{\eps,n}(\Sigma_n \xi)$ for any $\xi \in \R^n$. So we have
\begin{align*}
\lambda V_{\eps,n}(\Sigma_n \xi)-{\rm Tr}(D^2_H V_{\eps,n}(\Sigma_n \xi))
&+\sum_{i=1}^{+\infty} \lambda_i^{-1}\xi_i D_iV_{\eps,n}(\Sigma_n \xi)
\\
&+\gen{D_H\Phi_{\eps,n}(\Sigma_n \xi),D_H V_{\eps,n}(\Sigma_n \xi)}_H=\psi(\xi).
\end{align*}
Now adding and subtracting $L_\eps V_{\eps,n}(\Sigma_n\xi)$ (see \eqref{operator_veps}) and letting $\xi=\Pi_n x$ we get \eqref{Equazione soddisfatta da perturbazione}.
Observe that by Proposition \ref{stime di schauder} we also get the following estimate
\begin{equation}\label{Stime di schauder infinito dimensionali}
\norm{D_HV_{\eps,n}}_\infty\leq \sqrt{\frac{\lambda_1 \pi}{\lambda}}\norm{f}_\infty=:K.
\end{equation}

\noindent Using \eqref{Equazione soddisfatta da perturbazione} and \eqref{Stime di schauder infinito dimensionali} we get
\begin{align*}
\int_X\abs{f_n-f}^2d\nu_\ve=
\int_X\abs{\gen{D_H \Phi_\eps-D_H \Phi_{\ve,n},D_H V_{\ve,n}}}_H^2d\nu_\ve
\leq K^2\int_X\abs{D_H \Phi_\eps-D_H \Phi_{\ve,n}}_H^2d\nu_\ve.
\end{align*}
and by \eqref{phone1} we obtain that $f_n$ converges to $f$ in $L^2(X,\nu_\eps)$.

In order to prove the last part of the claim we first estimate $D_H^2 V_{\eps,n}$. Differentiating \eqref{Equazione soddisfatta da perturbazione} along $e_j$, multiplying the result by $D_j V_{\eps,n}$ and then summing up from $1$ to $n$, yields
\begin{align*}
\lambda\abs{D_H V_{\eps,n}}^2_H&-\sum_{j=1}^nD_jV_{\eps,n}L_\eps(D_j V_{\eps,n})
+\sum_{i=1}^n\lambda_i^{-1}(D_i V_{\eps,n})^2+\gen{D^2_H\Phi_\eps D_HV_{\eps,n},D_HV_{\eps,n}}_H
\\
&=\, \gen{D_Hf,D_HV_{\eps,n}}_H
+\gen{(D^2_H\Phi_\eps-D^2_H\Phi_{\eps,n})D_HV_{\eps,n},D_HV_{\eps,n}}_H
\\
&\quad \ +\gen{D_H^2V_{\eps,n}D_HV_{\eps,n},D_H\Phi_\eps-D_H\Phi_{\eps,n}}_H.
\end{align*}
Since $\lambda_i>0$ for every $i\in\N$, by the convexity of $\Phi_\eps$ we get
\begin{align}\label{prev}
-\sum_{j=1}^nD_jV_{\eps,n}L_\eps (D_j V_{\eps,n})\leq&\,  \gen{D_Hf,D_HV_{\eps,n}}_H+\gen{D_H^2V_{\eps,n}D_HV_{\eps,n},D_H\Phi_\eps-D_H\Phi_{\eps,n}}_H\notag\\
&+\gen{(D^2_H\Phi_\eps-D^2_H\Phi_{\eps,n})D_HV_{\eps,n},D_HV_{\eps,n}}_H.
\end{align}

\noindent Thus, integrating \eqref{prev} with respect to $\nu_\eps$ and using that
$$\int_X\gen{D_H u,D_H \varphi}_Hd\nu_\ve=-\int_X \varphi L_\ve u d\nu_\ve,\qquad\;\, u\in D(L_\ve),\,\varphi\in\mathcal{F}C^1_b(X),$$
we deduce
\begin{align*}
\int_X\abs{D^2_HV_{\eps,n}}_{\mathcal H_2}^2d\nu_{\eps}&\leq K\nu_\eps(X)\norm{D_Hf}_\infty+\sigma K\int_X\abs{D^2_HV_{\eps,n}}^2_{\mathcal H_2}d\nu_{\eps}\\
&+\frac{1}{4\sigma}K\int_X\abs{D_H\Phi_\eps-D_H\Phi_{\eps,n}}^2_Hd\nu_\eps+ K^2\pa{\int_X\abs{D^2_H\Phi_\eps-D^2_H\Phi_{\eps,n}}^2_{\mathcal H_2}d\nu_\eps}^{1/2}.
\end{align*}
for every $\sigma>0$. Choosing $\sigma=(2K)^{-1}$, we have
\begin{align*}
\frac{1}{2}\int_X\abs{D^2_HV_{\eps,n}}^2_{\mathcal H_2}d\nu_{\eps}\leq&\,
K\nu_\eps(X)\norm{D_Hf}_\infty+\frac{1}{2}K^2\int_X\abs{D\Phi_\eps-D\Phi_{\eps,n}}^2_Hd\nu_\eps
\\
&+K^2\pa{\int_X\abs{D^2\Phi_\eps-D^2\Phi_{\eps,n}}^2_{\mathcal H_2}d\nu_\eps}^{1/2}.
\end{align*}
Thanks to \eqref{phone1} and \eqref{phone2} there is a constant $C=C(K,\ve)>0$ such that $\|{D^2_HV_{\eps,n}}\|_{L^2(X,\nu_\ve;\mathcal H_2)}\leq C$ for every $n\in\N$.
\noindent
To complete the proof, we show that $D_Hf_n$ converges to $D_H f$ in $L^1(X,\nu_\ve;H)$. We have
\begin{align*}
\int_X\abs{D_H f_n-D_Hf}d\nu_\eps =\, &\int_X\abs{D_H\gen{D_H \Phi_\eps-D_H \Phi_{\ve,n},D_H V_{\ve,n}}_H}_Hd\nu_\ve
\\
\leq&\,  \int_X\abs{(D^2_H \Phi_\eps-D^2_H \Phi_{\ve,n})D_H V_{\ve,n}}_H+\abs{D^2_H V_{\ve,n}(D_H \Phi_\eps-D_H \Phi_{\ve,n})}_Hd\nu_\ve
\\
\leq&\,  K\|D^2_H \Phi_\eps-D^2_H \Phi_{\ve,n}\|_{L^2(X,\nu_\ve;\mathcal{H}_2)}\\
&+\|{D^2_HV_{\eps,n}}\|_{L^2(X,\nu_\ve;\mathcal H_2)}\|D_H \Phi_\eps-D_H \Phi_{\ve,n}\|_{L^2(X,\nu_\ve;H)}.
\end{align*}
So, being $\|{D^2_HV_{\eps,n}}\|_{L^2(X,\nu_\ve;\mathcal H_2)}$ bounded, the claim follows from  \eqref{phone1} and \eqref{phone2}.
\end{proof}

Proposition \ref{mannggia} and the Lumer--Phillips theorem yield that the resolvent set of $L_\ve$ in $L^2(X, \nu_\ve)$ contains the half-line $(0,+\infty)$. In addition, from \cite[Theorem 5.10]{CF16}, we get the following approximation result.

\begin{prop}\label{monedero-1}
For any $\eps>0$ and $f\in L^2(X,\nu_\eps)$ there exists a sequence $(f_n)_{n\in\N}$ in $D^{1,2}(X,\nu_\eps)$ such that $R(\lambda,L_\eps)f_n$ belongs to $\mathcal{F}C^3_b(X)$ for every $n\in\N$ and
\begin{gather*}
\lim_{n\ra+\infty}\norm{R(\lambda,L_\eps)f_n-R(\lambda,L_\eps)f}_{D^{2,2}(X,\nu_\eps)}=0,\qquad\;\, \lambda>0,
\end{gather*}
where $R(\lambda,L_\eps)$ is the resolvent operator of $L_\eps$ $($see \eqref{operator_veps}$)$. In addition
\begin{equation}\label{stime1}
\norm{R(\lambda,L_\eps)f}_{L^2(X,\nu_\eps)}\leq \frac{1}{\lambda} \norm{f}_{L^2(X,\nu_\eps)},\qquad \norm{D_HR(\lambda,L_\eps)f}_{L^2(X,\nu_\eps;H)}\leq \frac{1}{\sqrt{\lambda}} \norm{f}_{L^2(X,\nu_\eps)},
\end{equation}
and
\begin{equation}\label{stime2}
\norm{D^2_H R(\lambda,L_\eps)f}_{L^2(X,\nu_\eps;\mathcal H_2)}\leq \sqrt{2} \norm{f}_{L^2(X,\nu_\eps)}.
\end{equation}
\end{prop}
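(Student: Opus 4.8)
My plan is to treat the approximation assertion and the three resolvent estimates separately, proving the estimates first on a convenient dense class and extending them afterwards by closedness.

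For the approximation I would lean on \cite[Theorem 5.10]{CF16} combined with Proposition \ref{mannggia}. Indeed, Proposition \ref{mannggia} shows that for every $g\in\mathcal{F}C^\infty_b(X)$ the data $g_n:=g+\langle D_H\Phi_\eps-D_H\Phi_{\eps,n},D_HV_{\eps,n}\rangle_H\in D^{1,2}(X,\nu_\eps)$ satisfy $R(\lambda,L_\eps)g_n=V_{\eps,n}\in\mathcal{F}C^3_b(X)$ and $g_n\to g$ in $L^2(X,\nu_\eps)$; since $\mathcal{F}C^\infty_b(X)$ is dense in $L^2(X,\nu_\eps)$, the set $\mathcal S:=\set{h\in L^2(X,\nu_\eps)\tc R(\lambda,L_\eps)h\in\mathcal{F}C^3_b(X)}$ is dense. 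Once the estimates are known, the $D^{2,2}$-convergence $R(\lambda,L_\eps)f_n\to R(\lambda,L_\eps)f$ is automatic: picking $f_n\in\mathcal S$ with $f_n\to f$ in $L^2(X,\nu_\eps)$, the differences $R(\lambda,L_\eps)(f_n-f)$ are controlled in $D^{2,2}(X,\nu_\eps)$ by $\|f_n-f\|_{L^2(X,\nu_\eps)}$.

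So the core is to prove \eqref{stime1}--\eqref{stime2} for $f\in\mathcal S$, writing $u:=R(\lambda,L_\eps)f\in\mathcal{F}C^3_b(X)$. The first inequality in \eqref{stime1} is the standard resolvent bound for the self-adjoint non-positive operator $L_\eps$. For the gradient bound I would test $\lambda u-L_\eps u=f$ against $u$ and use the weak form $\int_X\langle D_Hu,D_H\varphi\rangle_H\,d\nu_\eps=-\int_X\varphi L_\eps u\,d\nu_\eps$, obtaining the energy identity
\[
\lambda\|u\|_{L^2(X,\nu_\eps)}^2+\|D_Hu\|_{L^2(X,\nu_\eps;H)}^2=\int_X fu\,d\nu_\eps\le\lambda^{-1}\|f\|_{L^2(X,\nu_\eps)}^2,
\]
which gives the second inequality in \eqref{stime1}. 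For \eqref{stime2} the key is a Bochner-type identity: differentiating the resolvent equation and using, from \eqref{operator_veps}, the commutation relation
\[
D_j(L_\eps u)=L_\eps(D_ju)-D_ju-\langle D^2_H\Phi_\eps e_j,D_Hu\rangle_H
\]
(where the $-D_ju$ comes from the linear drift, since $D_jx_k=\lambda_j\delta_{jk}$), then multiplying by $D_ju$, summing over $j$, integrating against $\nu_\eps$ and integrating by parts, I would arrive at
\[
\int_X(L_\eps u)^2\,d\nu_\eps=\|D^2_Hu\|_{L^2(X,\nu_\eps;\mathcal H_2)}^2+\|D_Hu\|_{L^2(X,\nu_\eps;H)}^2+\int_X\langle D^2_H\Phi_\eps D_Hu,D_Hu\rangle_H\,d\nu_\eps.
\]
The convexity of $\Phi_\eps=U_\eps+\frac{1}{2\eps}d^2_\Omega$ (Propositions \ref{prop_dist} and \ref{moreau-prop}) makes the last integral non-negative, so, inserting $L_\eps u=\lambda u-f$ and the energy identity above, I get $\|D^2_Hu\|_{L^2(X,\nu_\eps;\mathcal H_2)}^2\le\|f\|_{L^2(X,\nu_\eps)}^2$, which is even stronger than the stated constant $\sqrt2$.

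Finally I would pass from $\mathcal S$ to all of $L^2(X,\nu_\eps)$: given $f$ and $f_k\to f$ in $L^2(X,\nu_\eps)$ with $f_k\in\mathcal S$, the estimates applied to $f_k-f_l$ show that $(D_HR(\lambda,L_\eps)f_k)_k$ and $(D^2_HR(\lambda,L_\eps)f_k)_k$ are Cauchy, and the closedness of $D_H$ and of $(D_H,D^2_H)$ identifies their limits with $D_HR(\lambda,L_\eps)f$ and $D^2_HR(\lambda,L_\eps)f$, so the bounds survive in the limit. The hard part will be the rigorous justification of the Bochner identity, namely the termwise integration by parts in infinitely many variables and the handling of the unbounded drift $\sum_i\lambda_i^{-1}x_iD_iu$ and of the merely $H$-Lipschitz field $D_H\Phi_\eps$; this is precisely where the smoothness $u\in\mathcal{F}C^3_b(X)$ (which reduces all derivatives of $u$ to finitely many coordinates) and the integrability of $D_H\Phi_\eps$ and $D^2_H\Phi_\eps$ provided by Propositions \ref{prop_dist} and \ref{moreau-prop} become indispensable.
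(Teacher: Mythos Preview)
Your proposal is correct and aligns with the paper's own treatment: the paper does not actually prove this proposition but simply invokes \cite[Theorem 5.10]{CF16} together with Proposition~\ref{mannggia}, and what you outline is precisely the skeleton of that cited argument (density of $\mathcal S$ via the approximants $V_{\eps,n}$, energy identity for the gradient bound, and a Bochner-type identity for the Hessian bound, all carried out on cylindrical $C^3_b$ resolvents where the computation is finite-dimensional).

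Two minor remarks. First, your commutator $D_j(L_\eps u)=L_\eps(D_ju)-D_ju-\langle D^2_H\Phi_\eps e_j,D_Hu\rangle_H$ differs from the analogous computation the paper performs later (proof of Theorem~\ref{pointwise}), where the curvature term from the linear drift appears as $\sum_i\lambda_i^{-1}(D_iu)^2$ rather than $|D_Hu|_H^2$; the discrepancy stems from which convention for $D_i$ one uses, but it is immaterial here since either term is non-negative and is discarded in the Hessian estimate. Second, your observation that one actually obtains $\|D^2_HR(\lambda,L_\eps)f\|_{L^2}\le\|f\|_{L^2}$ (via $\|L_\eps u\|^2=\|f\|^2-\lambda^2\|u\|^2-2\lambda\|D_Hu\|^2\le\|f\|^2$) is correct and sharper than the stated constant $\sqrt{2}$; the looser constant in \eqref{stime2} presumably reflects a cruder bound in the cited reference.
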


Now, we are ready to prove the main theorem of this section.

\begin{thm}\label{main theorem}
The following statements hold true.
\begin{enumerate}[\rm(i)]
\item For any $\eps>0$ and $f\in L^2(X,\nu_\eps)$ it holds that
\begin{gather*}
\lim_{n\ra+\infty}\norm{T_{\eps}(t)f_n-T_\eps(t) f}_{D^{2,2}(X,\nu_\eps)}=0,\qquad\;\, t>0,
\end{gather*}
where $(f_n)_{n\in\N}$ is the sequence defined in Proposition \ref{monedero-1}.
Furthermore $T_\eps(t)f_n$ belongs to $\mathcal{F}C^3_b(X)$. In addition, if $f\in D^{1,2}(X,\nu_\ve)$  then $D_Hf_n$ converges to $D_Hf$ in $L^1(X, \nu_\ve; H)$, as $n$ goes to infinity.
\item for any $f\in L^2(\Omega,\nu)$ there exists an infinitesimal sequence $(\eps_n)_{n\in\N}$ such that
$T_{\eps_n}(t) \tilde{f}$ weakly converges to $T_\Omega(t) f$ in $D^{2,2}(\Omega,\nu)$, where $\tilde{f}$ is any $L^2$-extension of $f$ to $X$.
\end{enumerate}
\end{thm}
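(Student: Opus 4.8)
The plan is to handle the two parts at different ``levels'': part (i) lives at a fixed regularisation $\eps>0$, where the analyticity of $T_\eps(t)$ is available, while part (ii) extracts the domain $\Om$ in the limit $\eps\to 0^+$ by a compactness-and-identification argument. For part (i) the key is an $\eps$-fixed smoothing property. By Remark \ref{remark_Teps} and Proposition \ref{prop_TOm}(i), $L_\eps$ is self-adjoint and non-positive and $T_\eps(t)$ is analytic, so $\|L_\eps T_\eps(s)\|_{\mathcal L(L^2(X,\nu_\eps))}\le (es)^{-1}$ for $s>0$. Fixing any $\lambda>0$ and writing, for $g\in L^2(X,\nu_\eps)$,
\[
T_\eps(s)g=R(\lambda,L_\eps)\big(\lambda T_\eps(s)g-L_\eps T_\eps(s)g\big),
\]
estimates \eqref{stime1}--\eqref{stime2} yield $\|T_\eps(s)g\|_{D^{2,2}(X,\nu_\eps)}\le c\,(\lambda+(es)^{-1})\|g\|_{L^2(X,\nu_\eps)}$, i.e. $T_\eps(s)$ maps $L^2(X,\nu_\eps)$ boundedly into $D^{2,2}(X,\nu_\eps)$. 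Now put $g_n:=R(\lambda,L_\eps)f_n$ and $g:=R(\lambda,L_\eps)f$; Proposition \ref{monedero-1} gives $g_n\to g$ in $D^{2,2}(X,\nu_\eps)$, in particular in $L^2(X,\nu_\eps)$. Since $g_n,g\in D(L_\eps)$ and $T_\eps(t)$ commutes with $L_\eps$,
\[
T_\eps(t)f_n-T_\eps(t)f=(\lambda-L_\eps)T_\eps(t)(g_n-g)=\lambda\,T_\eps(t)(g_n-g)-T_\eps(\tfrac t2)\big(L_\eps T_\eps(\tfrac t2)(g_n-g)\big).
\]
The first term tends to $0$ in $D^{2,2}(X,\nu_\eps)$ by the smoothing bound; for the second, $\|L_\eps T_\eps(\tfrac t2)(g_n-g)\|_{L^2}\le\tfrac{2}{et}\|g_n-g\|_{L^2}\to 0$ and a further application of the smoothing bound closes the convergence. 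The membership $T_\eps(t)f_n\in\mathcal{F}C^3_b(X)$ is inherited from the cylindrical lifts of the finite dimensional parabolic solutions (the parabolic counterpart of Proposition \ref{stime di schauder} applied to \eqref{prob_finito_evol}), and the assertion $D_Hf_n\to D_Hf$ in $L^1(X,\nu_\eps;H)$ for $f\in D^{1,2}(X,\nu_\eps)$ is the last statement of Proposition \ref{mannggia} for cylindrical $f$, extended by density.

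For part (ii) I would first produce $\eps$-uniform bounds for $T_\eps(t)\tilde f$ on $\Om$. By Proposition \ref{moreau-prop}(i), as $\eps\downarrow 0$ both $U_\eps\uparrow U$ and $(2\eps)^{-1}d_\Om^2\uparrow$; hence $\Phi_\eps\uparrow$, $e^{-\Phi_\eps}\downarrow$ and the measures $\nu_\eps=e^{-\Phi_\eps}\gamma$ decrease. Consequently $\eps\mapsto\|\tilde f\|_{L^2(X,\nu_\eps)}$ is non-increasing, so for $\eps\le\eps_0$ it is dominated by $\|\tilde f\|_{L^2(X,\nu_{\eps_0})}$. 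The constants in \eqref{stime1}--\eqref{stime2}, hence in the smoothing bound of part (i), are independent of $\eps$, so $\|T_\eps(t)\tilde f\|_{D^{2,2}(X,\nu_\eps)}$ is bounded uniformly in $\eps\le\eps_0$. Finally, on $\Om$ one has $d_\Om=0$ and $U_\eps\le U$, whence $e^{-U}\le e^{-U_\eps}$ there and
\[
\|u\|_{D^{2,2}(\Om,\nu)}\le\|u\|_{D^{2,2}(X,\nu_\eps)}
\]
for every $u$ (the same holding for the $L^2$, $H$-gradient and Hessian norms). Thus $(T_\eps(t)\tilde f)_{\eps\le\eps_0}$ is bounded in the reflexive space $D^{2,2}(\Om,\nu)$, and we may extract an infinitesimal sequence $(\eps_n)$ with $T_{\eps_n}(t)\tilde f\rightharpoonup g$ weakly in $D^{2,2}(\Om,\nu)$ for some $g$.

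The remaining and central step is to identify $g$ with $T_\Om(t)f$. I would pass to the limit $\eps_n\to 0$ in the weak formulation satisfied by $u_\eps:=T_\eps(t)\tilde f$: for every $\varphi\in\mathcal{F}C^\infty_b(\Om)$ and $t>0$,
\[
\frac{d}{dt}\int_X u_\eps\,\varphi\,d\nu_\eps=-\int_X\langle D_H u_\eps,D_H\varphi\rangle_H\,d\nu_\eps.
\]
The penalisation concentrates $\nu_\eps$ on $\Om$: since $e^{-\Phi_\eps}\to e^{-U}\chi_\Om$ pointwise and $\nu(\partial\Om)=0$ by Hypothesis \ref{ipo_convex}, the restriction of $\nu_\eps$ to $\Om$ decreases to that of $\nu$ while $\nu_\eps(X\setminus\Om)\to 0$; together with the uniform $D^{2,2}$ bounds this makes the contributions over $X\setminus\Om$ negligible in the limit. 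The weak $D^{2,2}(\Om,\nu)$ convergence of $u_{\eps_n}$ (hence of $D_H u_{\eps_n}$ in $L^2(\Om,\nu;H)$) turns the right-hand side into $-\int_\Om\langle D_H g,D_H\varphi\rangle_H\,d\nu$, so $g$ solves the weak Neumann problem encoded by \eqref{defn_LOm}; matching the initial datum and invoking the uniqueness for $L_\Om$ (Proposition \ref{prop_TOm}) forces $g=T_\Om(t)f$. Equivalently and more robustly, one first establishes the Mosco convergence of the forms $\varphi\mapsto\int_X|D_H\varphi|_H^2\,d\nu_\eps$ to $\mathcal D_\Om$, deduces strong resolvent convergence $R(\lambda,L_{\eps_n})\tilde f\to R(\lambda,L_\Om)f$ and, via a Trotter--Kato argument, strong $L^2(\Om,\nu)$ convergence of the semigroups; the weak $D^{2,2}$ limit must then coincide with this $L^2$ limit.

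\textbf{Main obstacle.} The genuinely delicate point is this identification step, namely converting the penalised whole-space formulation into the Neumann formulation on $\Om$. One must balance the blow-up of $(2\eps)^{-1}d_\Om^2$ outside $\Om$ against the simultaneous vanishing of $\nu_\eps$ there and guarantee that no boundary contribution is lost in the limit; this is exactly where the convexity of $\Om$, the condition $\nu(\partial\Om)=0$, and the $H$-regularity of $d_\Om^2$ recorded in Hypothesis \ref{ipo_convex} and Proposition \ref{prop_dist} become indispensable. By contrast, the smoothing estimate, the monotonicity-based uniform bounds and the weak compactness are comparatively routine.
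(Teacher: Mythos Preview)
Your proposal is correct in outline but follows a different route from the paper's proof, and the difference is worth noting.

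For part (i), the paper does not use your smoothing-plus-commutation argument. Instead it exploits the analyticity of $T_\eps(t)$ to write the Dunford integral representation
\[
D_H^j T_\eps(t)g=\frac{1}{2\pi i}\int_\sigma e^{\lambda t} D_H^j R(\lambda,L_\eps)g\,d\lambda,\qquad j=0,1,2,
\]
and then passes the limit $n\to\infty$ under the contour integral using the $D^{2,2}(X,\nu_\eps)$ resolvent convergence of Proposition~\ref{monedero-1} and dominated convergence. Your argument via $T_\eps(t)f_n-T_\eps(t)f=(\lambda-L_\eps)T_\eps(t)(g_n-g)$ together with the smoothing bound is a perfectly valid and arguably more elementary alternative that avoids complex integration altogether; the paper's route has the advantage of treating all three orders $j=0,1,2$ uniformly in one line.

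For part (ii), the paper again works through the resolvent and the contour integral, but the identification of the limit is \emph{not} carried out inside the paper: having obtained uniform $D^{2,2}(\Omega,\nu)$ bounds on $R(\lambda,L_{\eps_n})\tilde f$ (from $U\ge\Phi_\eps$ on $\Omega$ and \eqref{stime1}--\eqref{stime2}), the paper extracts a weak subsequential limit and then simply invokes \cite[Theorem~5.3]{CF18} to identify it as $R(\lambda,L_\Omega)f$. The transfer from resolvent to semigroup is then immediate from the Dunford formula tested against $g\in L^2(\Omega,\nu)$. Your proposed identification---either by passing to the limit in the time-dependent weak formulation or via Mosco convergence and Trotter--Kato---is a legitimate self-contained alternative, and you are right to flag it as the genuinely delicate step; but it would require substantially more work than the paper's one-line citation, and the parabolic route in particular needs some care with equicontinuity in $t$ beyond weak $D^{2,2}$ convergence at each fixed time. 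In short: your strategy is sound, but the paper short-circuits the hard identification by appealing to prior work and then lets the contour integral do the rest.
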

\begin{proof}
The analyticity of the semigroups $T_{\Om}(t)$ and $T_{\ve}(t)$ in $L^2(\Om, \nu)$ and $L^2(X, \nu_\ve)$, respectively and the decay estimates \eqref{stime3},\eqref{stime4}, \eqref{stime1} and \eqref{stime2} (and Remark \ref{remark_Teps}) allow us to write the following representation formulas
\begin{align}\label{for_res}
&D^j_HT_\eps(t)f=\frac{1}{2\pi i}\int_{\sigma} e^{\lambda t} D^j_HR(\lambda,L_\eps)fd\lambda,& t>0,\,f\in L^2(X,\nu_\eps),\\
&D^j_HT_\Omega(t)f=\frac{1}{2\pi i}\int_{\sigma'} e^{\lambda t} D^j_HR(\lambda,L_\Omega)fd\lambda,& t>0,\,f\in L^2(\Omega,\nu),\notag
\end{align}
for any $j=0,1,2$, where $\sigma$ (resp. $\sigma'$) is a smooth (unbounded) curve in $\CC$ which leaves on the left a sector containing the spectrum of $L_{\eps}$ (resp. $L_\Om$).\\
(i) For any $j=0,1,2$ we have
\begin{align}\label{fro}
\int_X |D^j_HT_{\eps}(t)f_n&-D^j_HT_\eps(t)f|_j^2d\nu_\eps=
\frac{1}{4\pi^2}\int_X\abs{\int_{\sigma}e^{\lambda t}\pa{D^j_HR(\lambda,L_{\eps})f_n d\lambda-D^j_HR(\lambda,L_{\eps})f }d\lambda}_j^2d\nu_\eps
\notag\\
& \leq \frac{K(\sigma,t)}{4\pi^2}\int_{\sigma}\int_X e^{\lambda t}|D^j_HR(\lambda,L_{\eps})f_n-D^j_HR(\lambda,L_{\eps})f|_j^2 d\nu_\eps d\lambda,
\end{align}
where $|\cdot|_j$ denotes the norm in $\R,\ H,\ \mathcal{H}_2$respectively and $K(\sigma,t)=\int_\sigma e^{\lambda t}d\lambda$. We conclude observing that, by the dominated convergence theorem and the results in Proposition \ref{monedero-1}, the right hand side of \eqref{fro} vanishes as $n$ goes to infinity. The furthermore part is consequence of Proposition \ref{monedero-1} and the integral representation formula \eqref{for_res}. Finally, the last assertion is an immediate consequence of Proposition \ref{mannggia}.

\noindent (ii) Since $U(x)\ge \Phi_\ve(x)$ for any $x \in \Om$, by using \eqref{stime1} and \eqref{stime2} we immediately deduce that for any vanishing sequence $(\ve_n)$ and for any $f \in L^2(\Om, \nu)$ the sequence $(R(\lambda, L_{\ve_n})\tilde{f})$ is bounded in $D^{2,2}(\Om, \nu)$. A compactness argument yields that there exists a subsequence of $(\ve_n)$ (still denoted by $(\ve_n)$) such that $R(\lambda, L_{\ve_n})\tilde{f}$ weakly converges to an element $u \in D^{2,2}(\Om, \nu)$, as $n$ goes to infinity. From \cite[Theorem 5.3]{CF18} it follows that $u= R(\lambda, L_\Om)f$. Now, the proof proceeds as in (i). Indeed, for any $f,g \in L^2(\Om, \nu)$ we have
\begin{align*}
\int_{\Om}(T_{\ve_n}(t)\tilde{f})g d\nu=\, & \frac{1}{2\pi i}\int_\Om\int_{\sigma} e^{\lambda t} (R(\lambda,L_{\ve_n})\tilde{f})gd\lambda d\nu= \frac{1}{2\pi i}\int_{\sigma} e^{\lambda t}\int_\Om (R(\lambda,L_{\ve_n})\tilde{f})g d\nu d\lambda,
\end{align*}
Now, arguing as in (i), by the dominated convergence theorem we deduce
\[
\lim_{n \to +\infty}\int_{\Om}(T_{\ve_n}(t)\tilde{f})g d\nu=
\frac{1}{2\pi i}\int_\Om\int_{\sigma} e^{\lambda t} (R(\lambda,L_\Om))f)gd\lambda d\nu
= \int_{\Om}(T_{\Om}(t)f)g d\nu.
\]
In a similar fashion it is possible to prove that $D_HT_{\eps_n}(t)\tilde{f}$ weakly converges to $D_HT_\Omega(t)f$ in $L^2(\Om, \nu;H)$ and that $D^2_HT_{\eps_n}(t)\tilde{f}$ weakly converges to $D^2_HT_\Omega(t)f$ in $L^2(\Om, \nu;\mathcal{H}_2)$.
\end{proof}

\section{Pointwise gradient estimates}
In this section we prove some pointwise gradient estimates for $T_\Om(t)$. As already observed in the Introduction, these estimates are interesting since firstly, they represent a generalisation to what it is known in literature and, secondly, they allow to deduce many properties of $T_{\Om}(t)$ and of the associated invariant measure $\nu$, as the results in Section \ref{SecLogSob} show.

\begin{thm}\label{pointwise}
For any $p\in [1,+\infty)$ and $f\in D^{1,p}(\Om,\nu)$
\begin{equation}\label{point}
|D_H T_\Om(t)f|_H^p\le e^{- p \lambda_1^{-1} t}(T_\Om(t)|D_H f|^p_H),\qquad\, t>0,\ \nu\text{\rm -a.e. in }\Om.
\end{equation}
\end{thm}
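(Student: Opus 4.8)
The plan is to prove \eqref{point} first for the smooth finite-dimensional approximants by a Bakry--Émery type argument, and then to transfer it to $T_\Om(t)$ through the two-parameter approximation of Theorem \ref{main theorem}. First I would work in $\Rn$ with the operator $\mathcal L_\phi$ of Subsection \ref{sect-finite}, $\phi$ convex and $\langle\mathcal B\xi,\xi\rangle\le-\beta|\xi|^2$. Fixing $t>0$ and $\varphi\in C^{2+\alpha}_c(\Rn)$, I write $u(\tau,\cdot)=T_\phi(\tau)\varphi$ and, for $\delta>0$, consider
\[
g(s):=T_\phi(s)\big[(|Du(t-s,\cdot)|^2+\delta)^{p/2}\big],\qquad s\in[0,t],
\]
so that $g(0)=(|DT_\phi(t)\varphi|^2+\delta)^{p/2}$ and $g(t)=T_\phi(t)(|D\varphi|^2+\delta)^{p/2}$. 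Writing $G=|Du|^2$ and differentiating, $g'(s)=T_\phi(s)\big[\mathcal L_\phi(G+\delta)^{p/2}-\partial_\tau(G+\delta)^{p/2}\big]$. The Bochner identity for $\mathcal L_\phi$ gives $\mathcal L_\phi G-2\langle Du,D\mathcal L_\phi u\rangle=2|D^2u|^2-2\langle Du,\mathcal B Du\rangle+2\langle Du,D^2\phi\,Du\rangle$, which by the convexity of $\phi$ and the bound on $\mathcal B$ is $\ge 2\beta G+2|D^2u|^2$. Expanding $(G+\delta)^{p/2}$ as a function of $G$ and using $|DG|^2\le 4G|D^2u|^2$ to absorb the term $\tfrac p2(\tfrac p2-1)(G+\delta)^{p/2-2}|DG|^2$ (negative when $p<2$), one checks
\[
\mathcal L_\phi(G+\delta)^{p/2}-\partial_\tau(G+\delta)^{p/2}\ge p\beta\,(G+\delta)^{p/2}-p\beta\delta(G+\delta)^{p/2-1}
\]
for every $p\ge1$; the residual coefficient $p(p-1)(G+\delta)^{p/2-1}|D^2u|^2\ge0$ is exactly what makes the computation close. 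Hence $\frac{d}{ds}(e^{-p\beta s}g(s))\ge-p\beta\delta\,e^{-p\beta s}T_\phi(s)[(G+\delta)^{p/2-1}]$; integrating on $[0,t]$ and letting $\delta\to0^+$ yields the dimension-free estimate $|DT_\phi(t)\varphi|^p\le e^{-p\beta t}T_\phi(t)|D\varphi|^p$.

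Next I would apply this with $\phi=\phi_{\eps,n,\eta_n}$, convex by construction, and $\beta=\lambda_1^{-1}$. Since the cylindrical liftings relate the Euclidean gradient estimate to the corresponding $H$-gradient estimate with the same constant (as already exploited in \eqref{Stime di schauder infinito dimensionali}), and $\mathcal L_{\phi_{\eps,n,\eta_n}}$ is the finite-dimensional reduction of $L_\eps$ with drift $\Phi_{\eps,n}$ (Proposition \ref{mannggia}), the convergences $\Phi_{\eps,n}\to\Phi_\eps$ of Lemma \ref{costr_eta_n} together with Theorem \ref{main theorem}(i) let me pass to the limit $n\to\infty$ and obtain
\[
|D_H T_\eps(t)f|_H^p\le e^{-p\lambda_1^{-1}t}\,T_\eps(t)|D_H f|_H^p,\qquad \nu_\eps\text{-a.e.},
\]
first for $f\in\mathcal{F}C^\infty_b(X)$ and then, by density of the smooth cylindrical functions, for every $f\in D^{1,p}(X,\nu_\eps)$.

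Finally I would send $\eps\to0$. By Theorem \ref{main theorem}(ii) there is a vanishing sequence $(\eps_n)$ with $T_{\eps_n}(t)\tilde f\rightharpoonup T_\Om(t)f$ in $D^{2,2}(\Om,\nu)$, and the same theorem applied to $|D_Hf|^p_H$ gives $T_{\eps_n}(t)\widetilde{|D_Hf|^p_H}\rightharpoonup T_\Om(t)|D_Hf|^p_H$. Since $\nu_{\eps_n}$ and $\nu$ are both equivalent to $\gamma$ and $\nu_{\eps_n}\to\nu$ on $\Om$, I would test the $\eps_n$-estimate against an arbitrary $0\le g\in L^\infty(\Om,\nu)$, integrate, and pass to the limit: the right-hand side converges by the weak convergence just recalled, while the left-hand side is controlled by the weak lower semicontinuity of the convex functional $\Phi\mapsto\int_\Om|\Phi|_H^p\,g\,d\nu$ on $L^2(\Om,\nu;H)$. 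This yields \eqref{point} $\nu$-a.e. in $\Om$ for $f\in L^2\cap D^{1,p}$, and a density argument in $D^{1,p}(\Om,\nu)$ completes the proof.

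I expect the main obstacle to be this last, $\eps\to0$, passage: Theorem \ref{main theorem} provides only \emph{weak} convergence in $D^{2,2}$, whereas \eqref{point} is a pointwise, nonlinear inequality, so one must simultaneously use weak lower semicontinuity on the gradient side, the weak convergence of the right-hand semigroup term, and the change of reference measure from $\nu_{\eps_n}$ to $\nu$ on $\Om$. A secondary difficulty, already at the finite-dimensional stage, is obtaining the inequality uniformly for all $p\in[1,\infty)$ and with a constant independent of the dimension $n$; the regularisation by $\delta$ and the Cauchy--Schwarz bound $|DG|^2\le4G|D^2u|^2$ are the devices that resolve it.
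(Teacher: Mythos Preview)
Your plan is sound and follows the same overarching strategy as the paper---a Bakry--\'Emery interpolation at the approximate level, then the two limits $n\to\infty$ and $\eps\to0$ supplied by Theorem \ref{main theorem}---but the implementation differs in two respects worth noting.

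First, the paper does \emph{not} work in $\R^n$ with a pointwise function $g(s)=T_\phi(s)[(|Du|^2+\delta)^{p/2}]$. Instead it stays in $X$, uses the smooth approximants $T_{\eps_k}(t)f_n\in\mathcal FC^3_b(X)$ directly, and runs the interpolation in the \emph{integrated} form
\[
G(s)=\int_X\eta_\sigma\big(|D_H T_{\eps_k}(t-s)f_n|_H^2\big)\,T_{\eps_k}(s)g\,d\nu_{\eps_k},
\]
with $\eta_\sigma(\xi)=\sqrt{\sigma+\xi}-\sqrt\sigma$ and a positive test function $g$; the self-adjointness of $T_{\eps_k}$ (Proposition \ref{prop_TOm}(vi)) then converts $G(0)\le e^{-\lambda_1^{-1}t}G(t)$ into the pointwise bound. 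Second, the paper proves only $p=1$ this way and obtains $p>1$ from Jensen's inequality \eqref{jensen}; your $(G+\delta)^{p/2}$ device handles all $p\ge1$ at once, which is slightly more direct and avoids invoking \eqref{jensen}. Your treatment of the $\eps\to0$ step---integrate against $g\ge0$, use weak lower semicontinuity of $\Phi\mapsto\int_\Om|\Phi|_H^p g\,d\nu$ on the left and weak $L^2$ convergence of $T_{\eps_n}(t)\widetilde{|D_Hf|_H^p}$ on the right---is arguably cleaner than the paper's appeal to pointwise convergence along subsequences. The one place where you should be more explicit is the $n\to\infty$ step: the finite-dimensional semigroup $T_{\phi_{\eps,n,\eta_n}}$ lifted to $X$ is \emph{not} $T_\eps$ but the semigroup with drift $\Phi_{\eps,n}$; the bridge to $T_\eps$ is exactly the identification $R(\lambda,L_\eps)f_n=V_{\eps,n}$ of Proposition \ref{mannggia} together with \eqref{for_res}, so make that link explicit rather than citing Theorem \ref{main theorem}(i) alone.
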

\begin{proof}
First we prove the claim with $p=1$ and $f\in \mathcal F C^\infty_b(\Om)$. Next we address to the general case.\\
Let $f \in \mathcal{F}C_b^\infty(X)\subseteq D^{1,2}(X,\nu)(\subseteq D^{1,2}(X,\nu_\ve), \text{ for any }\eps>0)$ and $g$ a bounded, continuous and positive function. To overcome the lack of regularity of the function $|D_H T_\Om(t)f|_H$ at its zeros, we replace it by $\eta_\sigma(|D_H T_\Om(t)f|_H^2)$ where $\eta_\sigma:[0,+\infty)\ra[0,+\infty)$ is  the concave and smooth function defined by $\eta_\sigma(\xi):=\sqrt{\sigma+\xi}-\sqrt{\sigma}$ for any $\xi\ge 0$ and $\sigma>0$. Note that $\eta_\sigma$ is Lipschitz continuous in $[0,+\infty)$ and satisfies
\begin{equation}\label{prop_eta}
\textrm{(i)}\,\eta_\sigma(\xi)\le \sqrt{\xi},\qquad\,\,\textrm{(ii)}\,\xi\eta'_\sigma(\xi)\ge \frac{1}{2}\eta_\sigma(\xi),\qquad\;\,\textrm{(iii)}\,\eta'_\sigma(\xi)+2\xi\eta''_\sigma(\xi)\ge 0,
\end{equation}
for any $\xi \ge 0$ and $\sigma >0$.

To proceed further, we need to control the third order spatial derivatives of $T_\Om(t)f$. Since we are not able to do that directly on $T_\Om(t)f$, we replace it by the double indexed approximating sequence $(T_{\varepsilon_k}(t)f_n)_{n,k\in\N}$ where the sequences $(\ve_k)_{k\in\N}$ and $(f_n)_{n\in\N}$ are as in Theorem \ref{main theorem}. More precisely $\ve_k$ vanishes as $k$ goes to infinity, $(T_{\eps_k}(t)f_n)\subseteq \mathcal{F}C^3_b(X)$ and $D_H f_n$ converges to $D_Hf$ in $L^1(X,\nu_\ve;H)$ as $n\to +\infty$.
Hence, for any $t>0$, $\tau, s \in [0,t]$ and $k,n \in \N$ we define
\[
w_{\tau}^{\ve_k,n}:=|D_H u_{\ve_k,n}(\tau)|_H^2,\qquad\;\, G(s)=G_{\sigma,h}^{\ve_k,n}(s):=\int_{X}\eta_\sigma(w_{t-s}^{\ve_k,n})T_{\ve_k}(s)gd\nu_{\ve_k},
\]
where, to simplify the notation, we have set $u_{\ve_k,n}:=T_{\varepsilon_k}(\cdot)f_n$ for any $k,n\in \N$.
Recall that $\nu_{\ve_k}= e^{-\Phi_{\ve_k}}\gamma$ is the invariant measure associated with $T_{\ve_k}(t)$ and that by the definition of the operator $L_{\ve_k}$ we get
\begin{equation}\label{inbyp}
\int_X \psi_1 L_{\ve_k}\psi_2 d \nu_{\ve_k}=-\int_X \langle D_H \psi_1,D_H \psi_2\rangle_H d\nu_{\ve_k},
\end{equation}
with $\psi_1 \in D^{1,2}(X,\nu_{\ve_k})$ and $\psi_2\in D(L_{\eps_k})= D^{2,2}(X, \nu_{\ve_k})$ (see \cite[Theorem 6.2]{CF16} for the characterisation of the domain of $D(L_{\eps_k})$).
Theorem \ref{main theorem} guarantees that, for every $t\geq 0$, the function $u_{\ve_k,n}(t,\cdot)$ belongs to $\mathcal{F}C^3_b(X)$ and, as consequence, that $G$ is differentiable in $(0,t)$. Thus, taking into account that
\begin{align*}\frac{d}{ds}\eta_\sigma(w_{t-s}^{\ve_k,n})&=\eta'_\sigma(w_{t-s}^{\ve_k,n})\frac{d}{ds}|D_H u_{\ve_k,n}(t-s)|_H^2\\
&= -2\eta'_\sigma(w_{t-s}^{\ve_k,n})\langle D_H u_{\ve_k,n}(t-s),D_H (L_{\ve_k}u_{\ve_k,n}(t-s))\rangle_H
\end{align*}
and using \eqref{inbyp} twice, we deduce
\begin{align}\label{derivative}
G'(s)=\, &  -2\int_X\eta'_{\sigma}(w_{t-s}^{\ve_k, n})\langle D_H u_{\ve_k,n}(t-s),D_H (L_{\ve_k}u_{\ve_k,n}(t))\rangle_H T_{\ve_k}(s)g d\nu_{\ve_k}\notag\\
&-\int_X\eta_{\sigma}'(w_{t-s}^{\ve_k, n})\langle D_H T_{\ve_k}(s)g, D_H w_{t-s}^{\ve_k, n}\rangle_H d\nu_{\ve_k}\notag\\
=\, & -2\int_X\eta'_{\sigma}(w_{t-s}^{\ve_k, n})\langle D_H u_{\ve_k,n}(t-s),D_H (L_{\ve_k}u_{\ve_k,n}(t-s))\rangle_H T_{\ve_k}(s)g d\nu_{\ve_k}\notag\\
&-\int_X\langle D_H (\eta_{\sigma}'(w_{t-s}^{\ve_k, n})T_{\ve_k}(s)g), D_H w_{t-s}^{\ve_k, n}\rangle_H d\nu_{\ve_k}+\int_X \eta_{\sigma}''(w_{t-s}^{\ve_k, n})T_{\ve_k}(s)g|D_H w_{t-s}^{\ve_k, n}|_H^2 d\nu_{\ve_k}\notag\\
=\, & -2\int_X\eta'_{\sigma}(w_{t-s}^{\ve_k, n})\langle D_H u_{\ve_k,n}(t-s),D_H (L_{\ve_k}u_{\ve_k,n}(t-s))\rangle_H T_{\ve_k}(s)g d\nu_{\ve_k}\notag\\
&+\int_X\eta_{\sigma}'(w_{t-s}^{\ve_k, n})T_{\ve_k}(s)g L_{\ve_k} (w_{t-s}^{\ve_k, n}) d\nu_{\ve_k}+\int_X \eta_{\sigma}''(w_{t-s}^{\ve_k, n})T_{\ve_k}(s)g|D_H w_{t-s}^{\ve_k, n}|_H^2 d\nu_{\ve_k}\notag\\
=\, &2\!\int_X\!\!\eta'_{\sigma}(w_{t-s}^{\ve_k, n})T_{\ve_k}(s)g\!\!\left(\frac{1}{2}L_{\ve_k} (w_{t-s}^{\ve_k, n})
-\langle D_H u_{\ve_k,n}(t-s),\!D_H (L_{\ve_k}u_{\ve_k,n}(t-s))\rangle_H\right)\!\!d\nu_{\ve_k}\notag\\
&+\int_X \eta_{\sigma}''(w_{t-s}^{\ve_k, n})T_{\ve_k}(s)g|D_H w_{t-s}^{\ve_k, n}|_H^2d\nu_{\ve_k}.
\end{align}
Now, a straightforward computation and Hypothesis \ref{hyp_base} yield that
\begin{align*}
&\frac{1}{2}L_{\ve_k} (w_{\cdot}^{\ve_k, n})
-\langle D_H u_{\ve_k,n},D_H (L_{\ve_k}u_{\ve_k,n})\rangle_H
\notag\\
&=|D^2_H u_{\ve_k,n}|_{\mathcal H_2}^2+\sum_{i=1}^{+\infty} \lambda_i^{-1}(D_i u_{\ve_k,n})^2 +\langle D^2_H \Phi_{\ve_k} D_H u_{\ve_k,n}, D_H u_{\ve_k,n}\rangle_H
\notag\\
& \ge |D^2_H u_{\ve_k,n}|_{\mathcal H_2}^2+ \lambda_1^{-1}|D_H u_{\ve_k,n}|_H^2+\langle D^2_H \Phi_{\ve_k} D_H u_{\ve_k,n}, D_H u_{\ve_k,n}\rangle_H.
\end{align*}
In addition it is easy to prove that
\begin{align}\label{b}
|D_H w_{\cdot}^{\ve_k, n}|_{\mathcal H_2}^2= 4 |D^2_H u_{\ve_k,n}D_H u_{\ve_k,n}|_{\mathcal H_2}^2\le 4|D^2_H u_{\ve_k,n}|_{\mathcal H_2}^2w_{\cdot}^{\ve_k, n}.
\end{align}
Thus, using \eqref{derivative} and \eqref{b}, taking into account the convexity of $\Om$ and $U$ and the fact that $\eta_\sigma''\le 0$ in $(0,+\infty)$ we deduce that
\begin{align*}
G'(s)\geq&\,  2\int_X [\eta'_\sigma(w_{t-s}^{\ve_k,n})+2 \eta_{\sigma}''(w_{t-s}^{\ve_k, n})w_{t-s}^{\ve_k, n}]T_{\ve_k}(s)g|D^2_H u_{\ve_k,n}(t-s)|_{\mathcal H_2}^2d\nu_{\ve_k}
\notag\\
&+2\lambda_1^{-1}\int_X\eta'_{\sigma}(w_{t-s}^{\ve_k, n})w_{t-s}^{\ve_k, n}T_{\ve_k}(s)g d\nu_{\ve_k}\notag\\
\geq&\,  \lambda_1^{-1}\int_X \eta_{\sigma}(w_{t-s}^{\ve_k, n})T_{\ve_k}(s)g d\nu_{\ve_k}= \lambda_1^{-1} G(s),
\end{align*}
where in the last inequality we have used also \eqref{prop_eta}(ii)-(iii). Integrating the previous estimate with respect to $s$ in $(0,t)$ we get $G(0)\le e^{-\lambda_1^{-1}t}G(t)$ and letting $\sigma\to 0$ we deduce
\begin{align}\label{toloc}\int_X  |D_H u_{\ve_k,n}(t)|_Hg d\nu_{\ve_k}\le &e^{- \lambda_1^{-1} t} \int_X |D_H f_n|_HT_{\ve_k}(t)g d\nu_{\ve_k}.
\end{align}
Proposition \ref{prop_TOm}(vi), Remark \ref{remark_Teps} and formula \eqref{toloc} imply
\begin{align}\label{biglietto}
\int_X |D_H u_{\ve_k,n}(t)|_Hg d\nu_{\ve_k}\le &e^{-\lambda_1^{-1}  t}\int_X (T_{\ve_k}(t)|D_H f_n|_H)g d\nu_{\ve_k}.
\end{align}
Since formula \eqref{biglietto} holds true for every positive, bounded and continuous function $g$ and the measures $\nu_\eps$ and $\nu$ are equivalent, we get $|D_H u_{\ve_k,n}(t)|_H\leq e^{-\lambda_1^{-1}  t}T_{\ve_k}(t)|D_H f_n|_H$, $\nu$-a.e. in $X$ for every $k,n\in\N$ and $t\geq 0$. From Theorem \ref{main theorem}, up to subsequences, we get that $|D_H u_{\ve_k,n}(t)|_H$ and $T_{\ve_k}(t)|D_H f_n|_H$ pointwise converge $\nu$-a.e. in $\Om$ to $|D_H T_\Om(t)f|_H$ and $T_\Om(t)|D_H f|_H$ respectively, as $k,n\to +\infty$. This yields \eqref{point} with $p=1$ and $f\in\mathcal{F}C^\infty_b(\Omega)$. Formula \eqref{jensen} allows to extend the previous estimate to any $p\in(1,\infty)$.\\
Finally, let $f\in D^{1,p}(\Omega,\nu)$ and let $(g_n)_{n\in\N}\subseteq\mathcal{F}C^\infty_b(\Omega)$ be a sequence converging to $f$ in $D^{1,p}(\Omega,\nu)$ and pointwise $\nu$-a.e. in $\Om$. Formula \eqref{point} with $f$  replaced by $g_n-g_m$ and the invariance of $\nu$ with respect to $T_\Om(t)$ give that the sequence $(D_H T_{\Om}(t)g_n)_{n\in\N}$ is a Cauchy sequence in $L^p(\Om,\nu;H)$. Since $ T_{\Om}(t)g_n$ converges to $ T_{\Om}(t)f$ in $L^p(\Om ,\nu)$ and the operator $D_H$ is closable in $L^p(\Om,\nu)$, we obtain that $ D_HT_{\Om}(t)g_n$ converges to $D_H T_{\Om}(t)f$ in $L^p(\Om,\nu;H)$. Writing \eqref{point} with $f$ replaced by $g_n$ and letting $n\to +\infty$ yield the claim in the general case.
\end{proof}

\begin{cor}
For any $p\in (1,+\infty)$ and $f\in D^{1,p}(X,\nu)$, it holds that
\begin{gather*}
\lim_{t\ra 0^+}\|D_H T_\Om(t)f\|_{L^p(\Om,\nu;H)}=\|D_H f\|_{L^p(\Om,\nu;H)}.
\end{gather*}
\end{cor}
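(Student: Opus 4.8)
The plan is to establish separately the two inequalities $\limsup_{t\to 0^+}\norm{D_H T_\Om(t)f}_{L^p(\Om,\nu;H)}\le\norm{D_H f}_{L^p(\Om,\nu;H)}$ and $\liminf_{t\to 0^+}\norm{D_H T_\Om(t)f}_{L^p(\Om,\nu;H)}\ge\norm{D_H f}_{L^p(\Om,\nu;H)}$. The first one is the easy half and is an immediate consequence of the pointwise estimate \eqref{point}. First observe that the restriction to $\Om$ of a function $f\in D^{1,p}(X,\nu)$ belongs to $D^{1,p}(\Om,\nu)$, so that \eqref{point} applies; integrating it over $\Om$ against $\nu$ and invoking the invariance of $\nu$ under $T_\Om(t)$ (the same property exploited at the end of the proof of Theorem \ref{pointwise}), one obtains
\[
\norm{D_H T_\Om(t)f}_{L^p(\Om,\nu;H)}^p\le e^{-p\lambda_1^{-1}t}\int_\Om T_\Om(t)\abs{D_H f}_H^p\,d\nu=e^{-p\lambda_1^{-1}t}\norm{D_H f}_{L^p(\Om,\nu;H)}^p.
\]
Letting $t\to 0^+$ and using $e^{-p\lambda_1^{-1}t}\to 1$ yields the upper bound.

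For the lower bound I would exploit the reflexivity of $L^p(\Om,\nu;H)$ (this is where the restriction $p\in(1,\infty)$ enters) together with the closability of $D_H$. Fix a sequence $t_n\to 0^+$ realising $\liminf_{t\to 0^+}\norm{D_H T_\Om(t)f}_{L^p(\Om,\nu;H)}$. By the upper bound just proved the family $(D_H T_\Om(t_n)f)_n$ is bounded in $L^p(\Om,\nu;H)$, hence, up to a subsequence, $D_H T_\Om(t_{n_k})f$ converges weakly to some $\Phi\in L^p(\Om,\nu;H)$. On the other hand the strong continuity of $T_\Om(t)$ in $L^p(\Om,\nu)$ (Proposition \ref{prop_TOm}(ii)) gives that $T_\Om(t_{n_k})f\to f$ strongly, hence weakly, in $L^p(\Om,\nu)$.

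The crucial step is then the identification $\Phi=D_H f$. Since $D_H$ is a closed operator from $L^p(\Om,\nu)$ into $L^p(\Om,\nu;H)$, its graph is a closed linear subspace of $L^p(\Om,\nu)\times L^p(\Om,\nu;H)$ and therefore weakly closed by Mazur's theorem. As the pairs $(T_\Om(t_{n_k})f,D_H T_\Om(t_{n_k})f)$ belong to the graph and converge weakly to $(f,\Phi)$, the limit pair lies in the graph too, that is $\Phi=D_H f$. The weak lower semicontinuity of the norm then gives
\[
\norm{D_H f}_{L^p(\Om,\nu;H)}=\norm{\Phi}_{L^p(\Om,\nu;H)}\le\liminf_{k\to\infty}\norm{D_H T_\Om(t_{n_k})f}_{L^p(\Om,\nu;H)}=\liminf_{t\to 0^+}\norm{D_H T_\Om(t)f}_{L^p(\Om,\nu;H)},
\]
which is exactly the lower bound. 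Combining the two inequalities shows that the limit exists and equals $\norm{D_H f}_{L^p(\Om,\nu;H)}$.

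I expect the main obstacle to be precisely this identification $\Phi=D_H f$, since it amounts to passing to the limit in a derivative under only \emph{weak} convergence; the clean way to do it is to observe that weak convergence of both components keeps the limit in the graph of the closed operator $D_H$ (equivalently, that $D_H$ has weakly sequentially closed graph). Everything else — the upper bound, the extraction of a weakly convergent subsequence, and the weak lower semicontinuity of the norm — is routine once reflexivity of $L^p(\Om,\nu;H)$ and the strong continuity of the semigroup are available.
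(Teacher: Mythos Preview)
Your proof is correct and follows essentially the same strategy as the paper: both obtain the upper bound from the pointwise estimate \eqref{point} together with the invariance of $\nu$, and the lower bound from the lower semicontinuity of $f\mapsto\|D_H f\|_{L^p(\Om,\nu;H)}$ with respect to $L^p$-convergence. The only difference is cosmetic: the paper simply invokes this lower semicontinuity as a known fact, whereas you spell out its proof via reflexivity, weak sequential compactness, and the weak closedness of the graph of the closed operator $D_H$.
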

\begin{proof}
By the strong continuity of $T_\Omega(t)$ and the lower semicontinuity of the $L^p$-norm of the gradient we have
\[
\|D_H f\|_{L^p(\Om,\nu;H)}\le \liminf_{t\to 0^+}\|D_H T_{\Om}(t)f\|_{L^p(\Om,\nu;H)}.
\]
Hence, by \eqref{point}
\begin{align*}
\int_{\Om}|D_H f|_H^{p}d\nu&\le \liminf_{t \to 0^+}\int_\Om |D_H T_\Om(t)f|_H^{p} d\nu\le \limsup_{t \to 0^+}\int_\Om |D_H T_\Om(t)f|_H^{p} d\nu
\notag\\
&\le \lim_{t \to 0^+} e^{-p\lambda_1^{-1} t}\int_\Om T_\Om(t)|D_H f|_H^{p} d\nu
=\int_\Om |D_H f|_H^{p}d\nu
\end{align*}
and the proof is complete.
\end{proof}

Now we prove a pointwise gradient-function estimate for $T_{\Om}(t)f$ whenever $f\in L^p(\Om,\nu)$ and $p\in(1,\infty)$. The proof is similar to \cite[Theorem 6.2.2]{Lor17}, however it cannot be directly adapted to $T_{\Om}(t)f$ in view of the possible lack of regularity of its derivatives. To overcome this difficulty  and the additional complications due to the infinite dimensional setting, we use again the approximants in Theorem \ref{main theorem}.

\begin{thm}\label{chiavi1}
For $p\in (1,+\infty)$, $f\in L^p(\Omega,\nu)$ and $t>0$ there exists a positive constant $K_p$, depending only on $p$, such that
\begin{align}\label{stima_integrata}
|D_H T_\Omega(t)f|^p_H &\leq K_p t^{-\frac{p}{2}} T_\Omega(t)|f|^p,\qquad \nu\text{-a.e. in }\Omega.
\end{align}
As a consequence, we get
\begin{align}\label{stima_radice}
\norm{D_H T_\Omega(t)f}_{L^p(\Omega,\nu;H)}
&\leq K_p^{\frac{1}{p}} t^{-\frac{1}{2}} \norm{f}_{L^{p}(\Omega,\nu)}.
\end{align}
\end{thm}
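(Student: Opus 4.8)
The plan is to run the Bernstein-type scheme already used for \eqref{point}, but on an \emph{interpolation functional} that couples $T_\Om(t)f$ with its gradient in such a way that the smoothing factor $t^{-p/2}$ is generated. As there, $T_\Om(t)f$ and its derivatives lack the regularity needed to differentiate pointwise, so I would first establish the estimate for the smooth double-indexed approximants $u_{\ve_k,n}(\tau):=T_{\ve_k}(\tau)f_n\in\mathcal{F}C_b^3(X)$ of Theorem \ref{main theorem} (with $f_n$ smooth) and only afterwards let $n\to+\infty$ and $\ve_k\to0^+$, exactly as in the proof of \eqref{point}. By density it suffices to treat $f\in\mathcal{F}C^\infty_b(\Om)$ and to extend to arbitrary $f\in L^p(\Om,\nu)$ at the end by a closability/lower-semicontinuity argument.

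Fix $t>0$, write $u=u_{\ve_k,n}$, and for $a=a(p)>0$ and $\delta>0$ set $\Psi_\delta(s):=\delta+|u(t-s)|^2+a(t-s)\,|D_H u(t-s)|_H^2$ for $s\in[0,t]$. Following the device used for \eqref{point}, I would study, for a positive bounded continuous $g_0$, the function $G(s):=\int_X \Psi_\delta(s)^{p/2}\,T_{\ve_k}(s)g_0\,d\nu_{\ve_k}$. Differentiating, moving $L_{\ve_k}$ onto $\Psi_\delta^{p/2}$ through the symmetry formula \eqref{inbyp}, and using the Bochner-type identity already exploited for \eqref{point}, namely
\[
\tfrac12 L_{\ve_k}\big(|D_H u|_H^2\big)-\langle D_H u,D_H L_{\ve_k}u\rangle_H=|D_H^2 u|_{\mathcal{H}_2}^2+\sum_{i=1}^{+\infty}\lambda_i^{-1}(D_i u)^2+\langle D_H^2\Phi_{\ve_k}D_H u,D_H u\rangle_H,
\]
one finds $G'(s)=\tfrac p2\int_X \Psi_\delta(s)^{\frac p2-2}N_\delta(s)\,T_{\ve_k}(s)g_0\,d\nu_{\ve_k}$, where
\[
N_\delta=\Psi_\delta\big[(2-a)|D_H u|_H^2+2a(t-s)\big(|D_H^2 u|_{\mathcal{H}_2}^2+R\big)\big]+\big(\tfrac p2-1\big)|D_H\Psi_\delta|_H^2,
\]
and $R:=\sum_i\lambda_i^{-1}(D_i u)^2+\langle D_H^2\Phi_{\ve_k}D_H u,D_H u\rangle_H\ge0$ by positivity of the $\lambda_i$ and convexity of $\Phi_{\ve_k}$. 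The whole point is to choose $a$ so that $N_\delta\ge0$; then $G'\ge0$ (as $T_{\ve_k}(s)g_0>0$), hence $G(0)\le G(t)$, and comparing $\Psi_\delta(0)\ge at\,|D_H T_{\ve_k}(t)f_n|_H^2$ with $\Psi_\delta(t)=\delta+|f_n|^2$, using symmetry once more and letting $\delta\to0^+$, gives $(at)^{p/2}|D_H T_{\ve_k}(t)f_n|_H^p\le T_{\ve_k}(t)|f_n|^p$, i.e.\ \eqref{stima_integrata} for the approximants with $K_p=a^{-p/2}$.

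The main obstacle is the sign of $N_\delta$. For $p\ge2$ the coefficient $\tfrac p2-1$ is non-negative, so the last term helps and it is enough to pick $a=2$ (whence $2-a=0$), giving $N_\delta\ge0$ at once, with $K_p=2^{-p/2}$. For $1<p<2$ the term $(\tfrac p2-1)|D_H\Psi_\delta|_H^2$ is negative and must be absorbed; here I would take $a=2(p-1)\in(0,2)$, expand $|D_H\Psi_\delta|_H^2=4\,|u\,D_H u+a(t-s)\,D_H^2 u\,D_H u|_H^2$, bound the cross term by Cauchy--Schwarz via $|D_H^2u\,D_H u|_H\le|D_H^2 u|_{\mathcal{H}_2}|D_H u|_H$, and discard $R\ge0$ and the residual non-negative Hessian terms. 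This reduces $N_\delta\ge0$ to the non-negativity of a quadratic form in the quantities $|u|\,|D_H^2 u|_{\mathcal{H}_2}$ and $|D_H u|_H^2$, whose discriminant is negative precisely because $2-a=4-2p<2$ for $p>1$; this yields $K_p=(2(p-1))^{-p/2}$. This $1<p<2$ case, and the verification that the chosen $a$ simultaneously keeps $2-a\ge0$ so that the $\delta$-regularisation only improves matters ($N_\delta\ge N_0$), is the delicate point.

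Finally, the passage to the limit mirrors that of \eqref{point}: by Theorem \ref{main theorem}, along suitable subsequences $|D_H u_{\ve_k,n}(t)|_H\to|D_H T_\Om(t)f|_H$ and $T_{\ve_k}(t)|f_n|^p\to T_\Om(t)|f|^p$ $\nu$-a.e.\ in $\Om$, and since $\nu_{\ve_k}$ and $\nu$ are equivalent the pointwise bound persists, giving \eqref{stima_integrata} for smooth $f$; the extension to arbitrary $f\in L^p(\Om,\nu)$ follows by $L^p$-approximation, closability of $D_H$ and lower semicontinuity of the norm. Estimate \eqref{stima_radice} is then obtained by integrating \eqref{stima_integrata} over $\Om$ against $\nu$ and invoking the invariance $\int_\Om T_\Om(t)|f|^p\,d\nu=\int_\Om|f|^p\,d\nu$ (that is, $T_\Om(t)1=1$ together with Proposition \ref{prop_TOm}(vi)), followed by taking $p$-th roots, which gives the constant $K_p^{1/p}$.
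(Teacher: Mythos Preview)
Your proof is correct, but it follows a route genuinely different from the paper's.

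The paper does \emph{not} run a Bernstein argument on a coupled functional. Instead it differentiates
\[
G_{\delta,n}(t)=T_\eps(s-t)\big((|T_\eps(t)f_n|^2+\delta)^{p/2}-\delta^{p/2}\big)
\]
in $t$, obtains (for $1<p\le 2$) the integrated inequality
\[
p(p-1)\int_0^s T_\eps(s-t)\Big((|T_\eps(t)f_n|^2+\delta)^{\frac{p-2}{2}}|D_H T_\eps(t)f_n|_H^2\Big)\,dt\le (|T_\eps(s)f_n|^2+\delta)^{p/2},
\]
and then \emph{feeds in the already proved estimate} \eqref{point} together with \eqref{jensen}, \eqref{holder} and a Young inequality to extract the $t^{-p/2}$ factor; the range $p>2$ is recovered from $p=2$ via Jensen. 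So in the paper Theorem \ref{chiavi1} is a \emph{consequence} of Theorem \ref{pointwise}.

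Your argument, by contrast, is self-contained: the choice $\Psi_\delta=\delta+|u|^2+a(t-s)|D_Hu|_H^2$ with $a=2$ for $p\ge 2$ and $a=2(p-1)$ for $1<p<2$, together with the Bochner identity, yields $N_\delta\ge 0$ directly (your discriminant computation for $1<p<2$ is correct: the quadratic $X^2-2(2-p)A^2X+(2-p)A^4$ in $X=|u|\,|D_H^2u|_{\mathcal H_2}$ has discriminant $4(2-p)(1-p)A^4<0$). This is precisely the infinite-dimensional lift of the finite-dimensional Bernstein method the paper itself uses in Proposition 2.1, carried out on the regular approximants $u_{\ve_k,n}\in\mathcal FC_b^3(X)$; it produces the explicit constants $K_p=(2(p-1))^{-p/2}$ for $1<p<2$ and $K_p=2^{-p/2}$ for $p\ge 2$, and does not invoke \eqref{point} at all. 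The passage to the limit ($n\to\infty$, $\ve_k\to 0^+$, then density in $L^p(\Om,\nu)$) and the derivation of \eqref{stima_radice} are handled exactly as in the paper.

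Both approaches are valid; yours is more direct and gives sharper explicit constants, while the paper's exhibits \eqref{stima_integrata} as a corollary of the gradient contraction \eqref{point}.
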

\begin{proof}
We remark that \eqref{stima_radice} is an easy consequence of \eqref{stima_integrata}, so it is enough to prove \eqref{stima_integrata}. We divide the proof in two steps. In the first step we prove that if $f\in\mathcal{F}C_b^\infty(X)$, then for every $\eps,s>0$ and $p\in(1,2]$ there exists $K_p>0$, depending only on $p$, such that
\begin{align}\label{stima_radice_intermedia}
|D_H T_\eps(s)f_n|^p_H &\leq K_p s^{-\frac{p}{2}} T_\eps(s)|f_n|^p,
\qquad \nu_\eps\text{-a.e. in } X,
\end{align}
(see Theorem \ref{main theorem}). In the second step we prove \eqref{stima_integrata} for any $p\in(1,\infty)$ and $f\in L^p(\Omega,\nu)$.

\noindent\emph{Step 1.}
Let us differentiate the function
\begin{gather*}
G_{\delta,n}(t)=T_\eps(s-t)\pa{\pa{|T_\eps(t)f_n|^2+\delta}^{p/2}-\delta^{p/2}},\qquad 0< t< s,
\end{gather*}
where $\eps,\delta>0$ and $p\in(1,2]$. Setting $\phi_{\eps,\delta,n}(t):=|T_\eps(t)f_n|^2+\delta$, we have
\begin{align}
G_{\delta,n}'(t)=&\, -L_\eps T_\eps(s-t)\pa{\pa{\phi_{\eps,\delta,n}(t)}^{p/2}-\delta^{p/2}}
\notag \\
&+T_\eps(s-t)\pa{p\pa{\phi_{\eps,\delta,n}(t)}^{(p-2)/2}(T_\eps(t)f_n) (L_\eps T_\eps(t)f_n)}
\notag\\
=&\, T_\eps(s-t)\Big[-L_\eps\pa{\pa{\phi_{\eps,\delta,n}(t)}^{p/2}-\delta^{p/2}}
\notag \\
&+p(T_\eps(t)f_n) (L_\eps T_\eps(t)f_n)\pa{\phi_{\eps,\delta,n}(t)}^{(p-2)/2}\Bigr].
\label{sandra} \end{align}
By Theorem \ref{main theorem} the function $\pa{\phi_{\eps,\delta,n}(t)}^{p/2}-\delta^{p/2}$ belongs to
$\mathcal{F}C^3_b(X)$, hence from the definition of $L_\ve$ (see \eqref{operator_veps}) we get
\begin{align}\label{qualc}
L_\eps\pa{\pa{\phi_{\eps,\delta,n}(t)}^{p/2}-\delta^{p/2}}=&\, p\pa{\phi_{\eps,\delta,n}(t)}^{(p-2)/2}(T_\eps(t)f_n)(L_\eps T_\eps(t)f_n)\notag\\
&+p\pa{\phi_{\eps,\delta,n}(t)}^{(p-2)/2}|D_H T_\eps(t)f_n|_H^2\notag\\
&+p(p-2)\pa{\phi_{\eps,\delta,n}(t)}^{(p-4)/2}(T_\eps(t) f_n)^2|D_H T_\eps(t)f_n|_H^2.
\end{align}
Combining \eqref{sandra} and \eqref{qualc} we get
\begin{align*}
G_{\delta,n}'(t)=\, &-p T_\eps(s-t)\pa{\pa{\phi_{\eps,\delta,n}(t)}^{(p-2)/2}|D_H T_\eps(t)f_n|_H^2}\\
&+p(2-p)T_\eps(s-t)\pa{\pa{\phi_{\eps,\delta,n}(t)}^{(p-4)/2}(T_\eps(t) f_n)^2|D_H T_\eps(t)f_n|_H^2}.
\end{align*}
Since the semigroup $(T_\eps(t))_{t\geq 0}$ is positivity preserving (see Proposition \ref{prop_TOm}(ii) and Remark \ref{remark_Teps}) we get
\begin{align}\label{tastiera}
G_{\delta,n}'(t)&\leq p(1-p) T_\eps(s-t)\pa{\pa{\phi_{\eps,\delta,n}(t)}^{(p-2)/2}|D_H T_\eps(t)f_n|_H^2}.
\end{align}
Now integrating \eqref{tastiera} from $0$ to $s$ with respect to $t$, we get
\begin{align*}
T_\eps(s)\pa{\pa{|f_n|^2+\delta}^{p/2}-\delta^{p/2}}&-\pa{|T_\eps(s)f_n|^2+\delta}^{p/2}+\delta^{p/2}
\\
&\leq p(1-p)\int_0^s T_\eps(s-t)\pa{\pa{|T_\eps(t) f_n|^2+\delta}^{(p-2)/2}|D_H T_\eps(t)f_n|_H^2} dt.
\end{align*}
Using again that $(T_\eps(t))_{t\geq 0}$ is positivity preserving, from the previous inequality we get
\begin{gather}\label{formula_intermedia}
p(p-1)\int_0^s T_\eps(s-t)\pa{\pa{|T_\eps(t) f_n|^2+\delta}^{(p-2)/2}|D_H T_\eps(t)f_n|_H^2} dt\leq \pa{|T_\eps(s) f_n|^2+\delta}^{p/2}.
\end{gather}
By the semigroup property, \eqref{point}, \eqref{jensen}, \eqref{holder}, Remark \ref{remark_Teps} and the Young inequality, we get for every $\eta>0$
\begin{align}
|D_H T_\eps(s)f_n|^p_H =&\, |D_H T_\eps(s-t)T_\eps(t)f_n|_H^p
\notag\\
 \leq&\,  e^{-p\lambda_1^{-1}(s-t)}T_\eps(s-t)|D_H T_\eps(t)f_n|_H^p
\notag\\
 \leq&\,  e^{-p\lambda_1^{-1}(s-t)}T_\eps(s-t)\pa{\pa{\phi_{\eps,\delta,n}(t)}^{-\frac{p(2-p)}{4}}
 |D_H T_\eps(t)f_n|_H^p\pa{\phi_{\eps,\delta,n}(t)}^{\frac{p(2-p)}{4}}}
\notag\\
 \leq&\,  e^{-p\lambda_1^{-1}(s-t)}\pa{T_\eps(s-t)\pa{\pa{\phi_{\eps,\delta,n}(t)}^{\frac{p}{2}-1}
 |D_H T_\eps(t)f_n|_H^2}}^{p/2}
\notag\\
& {\hskip 2cm} \cdot \pa{T_\eps(s-t)\pa{\phi_{\eps,\delta,n}(t)}^{\frac{p}{2}}}^{1-\frac{p}{2}}
\notag\\
 \leq&\,   e^{-p\lambda_1^{-1}(s-t)}\frac{p}{2}\eta^{2/p}T_\eps(s-t)\pa{\pa{\phi_{\eps,\delta,n}(t)}^{\frac{p}{2}-1}|D_H T_\eps(t)f_n|_H^2}
\notag\\
& + e^{-p\lambda_1^{-1}(s-t)}\pa{1-\frac{p}{2}}\eta^{2/(p-2)}T_\eps(s-t)\pa{|T_\eps(t)f_n|^p+\delta^{p/2}}\notag\\
 \leq&\,   e^{-p\lambda_1^{-1}(s-t)}\frac{p}{2}\eta^{2/p}T_\eps(s-t)\pa{\pa{\phi_{\eps,\delta,n}(t)}^{\frac{p}{2}-1}|D_H T_\eps(t)f_n|_H^2}
\notag\\
& + e^{-p\lambda_1^{-1}(s-t)}\pa{1-\frac{p}{2}}\eta^{2/(p-2)}T_\eps(s-t)\pa{T_\eps(t)|f_n|^p+\delta^{p/2}}\label{stima_intermedia_gradiente}
\end{align}
Multiplying \eqref{stima_intermedia_gradiente} by $e^{p\lambda_1^{-1}(s-t)}$, integrating from $0$ to $s$ with respect to $t$, and recalling \eqref{formula_intermedia} we get
\begin{align*}
\frac{e^{p\lambda_1^{-1}(s-t)}-1}{p\lambda_1^{-1}}|D_H T_\eps(s)f_n|^p_H \leq&\,
\frac{\eta^{2/p}}{2(p-1)}\pa{|T_\eps(s) f_n|^2+\delta}^{p/2}
\\
& +\pa{1-\frac{p}{2}}\eta^{2/(p-2)}s\pa{T_\eps(s)|f_n|^p+\delta^{p/2}}.
\end{align*}
Letting $\delta\ra 0^+$ and applying \eqref{jensen} we obtain
\begin{align*}
\frac{e^{p\lambda_1^{-1}(s-t)}-1}{p\lambda_1^{-1}}|D_H T_\eps(s)f_n|^p_H &\leq \pa{\frac{\eta^{2/p}}{2(p-1)}+\pa{1-\frac{p}{2}}\eta^{2/(p-2)}s} T_\eps(s)|f_n|^p
\end{align*}
whence
\begin{align*}
\frac{e^{p\lambda_1^{-1}(s-t)}-1}{p\lambda_1^{-1}}|D_H T_\eps(s)f_n|^p_H &\leq \min_{\eta>0}\set{\frac{\eta^{2/p}}{2(p-1)}+\pa{1-\frac{p}{2}}\eta^{2/(p-2)}s} T_\eps(s)|f_n|^p\\&=:c_p s^{1-\frac{p}{2}} T_\eps(s)|f_n|^p.
\end{align*}
for some positive constant $c_p$ depending only on $p$. Setting $t=0$, and recalling that the function $s/(e^{p\lambda_1^{-1}s}-1)$ is bounded from above, we get \eqref{stima_radice_intermedia}.

\noindent\emph{Step 2.} If $p\in(2,\infty)$ it suffices to write $|D_H T_\eps(s)f_n|^p_H=(|D_H T_\eps(s)f_n|^2_H)^{p/2}$ and to apply \eqref{stima_radice_intermedia} with $p=2$. Then, using \eqref{holder} together with Remark \ref{remark_Teps}, we get \eqref{stima_radice_intermedia} for every $p\in(1,\infty)$. Due to the properties listed in Theorem \ref{main theorem}, letting $n\ra+\infty$ and $\eps\ra 0$, uptoa subsequence we get \eqref{stima_integrata} for every $f\in\mathcal{F}C_b^{\infty}(X)$. Moreover,
 integrating it on $\Omega$ and using that $\nu$ is the invariant measure associated with $T_{\Om}(t)$, we get
\begin{equation}\label{c0-c1}
\int_\Omega |D_H T_\Omega(s)f|^p_Hd\nu \leq K_p s^{-\frac{p}{2}} \int_\Omega|f|^pd\nu.
\end{equation}
for any $f\in\mathcal{F}C^\infty_b(\Omega)$ and $p\in(1,\infty)$. Finally we extend estimate \eqref{c0-c1} to any $f\in L^p(\Om, \nu)$ arguing by approximation as in the last part of the proof of Theorem \ref{pointwise}. To this aim, let $f\in L^p(\Om,\nu)$ and let $(g_n)_{n\in\N}$ be a sequence of functions in $\mathcal{F}C^\infty_b(\Omega)$ converging to $f$ in $L^p(\Omega,\nu)$. Then, for every $n,k\in\N$
\begin{align*}
\int_\Omega |D_H T_\Omega(s)g_n-D_H T_\Omega(s)g_k|^p_Hd\nu &\leq K_p s^{-\frac{p}{2}} \int_\Omega|g_n-g_k|^pd\nu.
\end{align*}
So the sequence $(D_H T_\Omega(s)g_n)_{n\in\N}$ is a Cauchy sequence in $L^p(\Omega,\nu;H)$. The closability of the operator $D_H:\mathcal{F}C_b^\infty(\Omega)\ra L^p(\Omega,\nu)$ in $L^p(\Om,\nu)$ and the fact that for any $s>0$ the sequence $(T_\Omega(s)g_n)_{n\in\N}$ converges to $T_\Omega(s)f$ we get that $\lim_{n\ra+\infty}D_H T_\Omega(s)g_n=D_H T_\Omega(s)f$ in $L^p(\Omega,\nu;H)$. Hence, writing \eqref{c0-c1} with $f$ replaced by $g_n$ and letting $n\to+\infty$, we conclude.
\end{proof}

The pointwise gradient estimate \eqref{point} implies that $\norm{D_H T_\Omega(t)f}_{L^p(\Omega,\nu;H)}$ vanishes as $t\to+\infty$ and $f\in D^{1,p}(\Omega,\nu)$. Actually using \eqref{stima_integrata} we get the same result when $f$ belongs to $L^p(\Omega,\nu)$.

\begin{cor}\label{esp-decay-grad}
Let $p\in(1,\infty)$ and $t\ge 1$. For every $f\in L^p(\Omega,\nu)$
\begin{align*}
\norm{D_H T_\Omega(t)f}_{L^p(\Omega,\nu;H)} &\leq  C_p e^{-\lambda_1^{-1}t} \norm{f}_{L^p(\Omega,\nu)},
\end{align*}
where $C_p=K_p^{1/p}e^{\lambda_1^{-1}}$ and $K_p$ is the positive constant in Theorem \ref{chiavi1}.
\end{cor}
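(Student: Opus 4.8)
The plan is to obtain the exponential-in-$t$ decay by feeding the smoothing estimate of Theorem \ref{chiavi1} into the pointwise gradient estimate \eqref{point}, using the semigroup law to join them and the invariance of $\nu$ to integrate. The first observation is that \eqref{point} as stated requires its argument to lie in $D^{1,p}(\Om,\nu)$, whereas here $f$ is merely in $L^p(\Om,\nu)$. This is not an issue: for any $s>0$ the function $g:=T_\Om(s)f$ belongs to $D^{1,p}(\Om,\nu)$, since \eqref{stima_integrata} (equivalently \eqref{stima_radice}) guarantees $D_HT_\Om(s)f\in L^p(\Om,\nu;H)$. Thus \eqref{point} may be applied legitimately to $g$.

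Next I would write $T_\Om(t)f=T_\Om(t-s)g$ for $0<s<t$ and apply \eqref{point} over the elapsed time $t-s$, obtaining
\[
|D_HT_\Om(t)f|_H^p\le e^{-p\lambda_1^{-1}(t-s)}\,T_\Om(t-s)|D_Hg|_H^p,\qquad \nu\text{-a.e. in }\Om.
\]
Integrating this over $\Om$ and using that $\nu$ is invariant for $T_\Om(\cdot)$, so that $\int_\Om T_\Om(t-s)h\,d\nu=\int_\Om h\,d\nu$ for $h\ge 0$ (exactly as exploited in the proofs of Theorems \ref{pointwise} and \ref{chiavi1}), the right-hand side collapses and leaves $e^{-p\lambda_1^{-1}(t-s)}\int_\Om|D_Hg|_H^p\,d\nu$. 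Estimating the last integral by \eqref{stima_radice} applied to $g=T_\Om(s)f$ yields, after taking $p$-th roots,
\[
\norm{D_HT_\Om(t)f}_{L^p(\Om,\nu;H)}\le K_p^{1/p}\,s^{-1/2}\,e^{-\lambda_1^{-1}(t-s)}\,\norm{f}_{L^p(\Om,\nu)}.
\]

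Finally, since $t\ge 1$ I would simply take $s=1$, which is admissible because then $t-s\ge 0$. This makes the smoothing factor $s^{-1/2}$ equal to $1$ and turns the decay factor into $e^{-\lambda_1^{-1}(t-1)}=e^{\lambda_1^{-1}}e^{-\lambda_1^{-1}t}$, producing precisely the asserted bound with $C_p=K_p^{1/p}e^{\lambda_1^{-1}}$. I do not expect any genuine obstacle here: the whole argument is a short interpolation between the two main estimates already established. The only point that must be handled with a little care is the membership $T_\Om(s)f\in D^{1,p}(\Om,\nu)$ that licenses the use of \eqref{point} on a genuinely $H$-differentiable function, and this is exactly the content of Theorem \ref{chiavi1}; the remaining ingredients are the semigroup identity and the invariance of $\nu$. (The choice $s=1$ is dictated by the hypothesis $t\ge1$ rather than by optimality; a different $s$ would only alter the constant.)
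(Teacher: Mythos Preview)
Your proposal is correct and follows essentially the same route as the paper: split $T_\Omega(t)f=T_\Omega(t-1)T_\Omega(1)f$, apply \eqref{point} to $T_\Omega(1)f\in D^{1,p}(\Omega,\nu)$, integrate using the invariance of $\nu$, and then bound $\|D_HT_\Omega(1)f\|_{L^p}$ via Theorem~\ref{chiavi1}. Your extra remark justifying that $T_\Omega(s)f\in D^{1,p}(\Omega,\nu)$ before invoking \eqref{point} is a welcome clarification that the paper leaves implicit.
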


\begin{proof}
By \eqref{point}, \eqref{stima_radice}, the semigroup property and the fact that $\nu$ is invariant with respect to $T_\Om(t)$ we get
\begin{align*}
\int_\Omega|D_H T_\Omega(t)f|_H^pd\nu&=\int_\Omega|D_H T_\Omega(t-1)T_\Omega(1)f|_H^pd\nu
\\
&\leq e^{-p\lambda_1^{-1}(t-1)}\int_\Omega T_\Omega(t-1)|D_H T_\Omega(1)f|_H^pd\nu
\\
&\leq e^{-p\lambda_1^{-1}(t-1)}\int_\Omega |D_H T_\Omega(1)f|_H^pd\nu\\
&\leq K_p e^{-p\lambda_1^{-1}(t-1)}\int_\Omega T_\Omega(1)|f|^pd\nu\\
&\leq K_pe^{-p\lambda_1^{-1}(t-1)}\int_\Omega |f|^pd\nu,
\end{align*}
for any $t\geq 1,\, f\in L^p(\Om,\nu)$. This concludes the proof.
\end{proof}

\section{Logarithmic Sobolev inequality and other consequences}\label{SecLogSob}
Logarithmic Sobolev inequalities are important tools in the study of Gaussian Sobolev spaces since they represent the counterpart of the Sobolev embeddings which in general fail to hold when the Lebesgue measure is replaced by other measures, as for example the Gaussian one. In infinite dimension such inequalities are known for the Gaussian measure on the whole space (see \cite[Theorem 5.5.1]{Bog}) and on convex domains (see \cite[Proposition 3.5]{Cap15}). In the weighted Gaussian case the inequality is known in the whole space (see \cite[Proposition 11.2.19]{DaP02}), for Fr\'echet differentiable functions. In this section we use the pointwise gradient estimates \eqref{point} and \eqref{stima_integrata} to prove logarithmic Sobolev inequalities for weighted Gaussian measures on convex domains generalising all the above results. We also collect some consequences of the logarithmic Sobolev inequality \eqref{logsob}. To simplify the notation we set, if$f\in L^1(X,\nu_\varepsilon)$ and $g\in L^1(X,\nu)$
\begin{equation}\label{average}
m_\eps(f):=\frac{1}{\nu_\eps(X)}\int_X fd\nu_\eps,\qquad
m_\Omega(g):=\frac{1}{\nu(\Omega)}\int_\Omega gd\nu.
\end{equation}

First of all we study the asymptotic behaviour of the semigroup $(T_\eps(t))_{t\geq 0}$.

\begin{lemm}\label{nero}
For any $\eps>0$ and  $f\in\mathcal{F}C^1_b(X)$
\begin{gather}\label{rosso}
\lim_{t\ra+\infty}T_\eps(t)f(x)=m_\eps(f),\qquad\nu_\eps\text{\rm-a.e. }x\in X.
\end{gather}
In addition, if $f\leq 1$ and has a positive infimum, then
\begin{equation}\label{blu}
\lim_{t\ra+\infty}\int_X (T_\eps(t) f)\log (T_\eps(t) f)d\nu_\eps=\left(\int_X f d\nu_\eps\right)\log \pa{m_\eps(f)}=
\nu_\eps(X)m_\eps(f)\log \pa{m_\eps(f)}.
\end{equation}
\end{lemm}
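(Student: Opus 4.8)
The plan is to prove the pointwise limit \eqref{rosso} first and to deduce the entropy limit \eqref{blu} from it by dominated convergence. Fix $f\in\mathcal{F}C^1_b(X)$ and set $g_0:=f-m_\eps(f)$, which has zero $\nu_\eps$-average and lies in $D^{1,2}(X,\nu_\eps)$. Since the constant $1$ satisfies $L_\eps 1=0$ by \eqref{operator_veps}, the semigroup preserves constants, $T_\eps(t)1=1$, so $T_\eps(t)f-m_\eps(f)=T_\eps(t)g_0$ and it suffices to show that $T_\eps(t)g_0\to 0$ $\nu_\eps$-a.e.

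First I would record a spectral gap for $L_\eps$. As $L_\eps$ is self-adjoint with $\gen{-L_\eps u,u}=\int_X\abs{D_H u}_H^2d\nu_\eps$ by \eqref{inbyp}, its kernel consists of the functions with vanishing Malliavin gradient, hence of the constants (by non-degeneracy of $\gamma$, to which $\nu_\eps$ is equivalent); the spectral theorem then gives $T_\eps(t)g_0\to 0$ in $L^2(X,\nu_\eps)$. Differentiating $t\mapsto\norm{T_\eps(t)g_0}_{L^2(X,\nu_\eps)}^2$, using \eqref{inbyp}, the gradient estimate \eqref{point} for $T_\eps$ (which is established inside the proof of Theorem \ref{pointwise}) and the invariance of $\nu_\eps$, and discarding the boundary term at $t=+\infty$ by the $L^2$-convergence just obtained, I get
\begin{equation*}
\norm{g_0}_{L^2(X,\nu_\eps)}^2=2\int_0^{+\infty}\int_X\abs{D_H T_\eps(t)g_0}_H^2d\nu_\eps\,dt\le 2\pa{\int_0^{+\infty}e^{-2\lambda_1^{-1}t}dt}\int_X\abs{D_H g_0}_H^2d\nu_\eps=\lambda_1\int_X\abs{D_H g_0}_H^2d\nu_\eps,
\end{equation*}
i.e. the Poincar\'e inequality for $\nu_\eps$ with constant $\lambda_1$; by density it holds on $D^{1,2}(X,\nu_\eps)$ and is equivalent to the spectrum of $-L_\eps$ on zero-average functions lying in $[\lambda_1^{-1},+\infty)$.

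The main obstacle is to pass from this $L^2$-decay to $\nu_\eps$-a.e. convergence. Here I would exploit the spectral gap through the functional calculus: both $\int_1^{+\infty}\norm{T_\eps(t)g_0}_{L^2(X,\nu_\eps)}^2dt$ and $\int_1^{+\infty}\norm{L_\eps T_\eps(t)g_0}_{L^2(X,\nu_\eps)}^2dt$ are finite, the spectral integrands carrying the factors $e^{-2\mu t}$ and $\mu^2e^{-2\mu t}$ with $\mu\ge\lambda_1^{-1}$. By Tonelli's theorem the functions $t\mapsto T_\eps(t)g_0(x)$ and $t\mapsto L_\eps T_\eps(t)g_0(x)$ then belong to $L^2(1,+\infty)$ for $\nu_\eps$-a.e. $x$; since the $L^2(X,\nu_\eps)$-valued fundamental theorem of calculus for the analytic semigroup identifies the latter with $\tfrac{d}{dt}T_\eps(t)g_0(x)$ for a.e. $x$ (again via Tonelli applied to the Bochner integral), the map $t\mapsto T_\eps(t)g_0(x)$ lies in $H^1(1,+\infty)$ for a.e. $x$ and therefore tends to $0$ as $t\to+\infty$. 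This proves \eqref{rosso}.

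Finally, for \eqref{blu} I assume $c:=\inf_X f>0$ and $f\le 1$. Positivity (Proposition \ref{prop_TOm}(ii) and Remark \ref{remark_Teps}) together with $T_\eps(t)1=1$ give $c\le T_\eps(t)f\le 1$ for every $t>0$, so $T_\eps(t)f$ ranges in the compact interval $[c,1]$, on which $s\mapsto s\log s$ is continuous and bounded. Letting $t\to+\infty$ in $\int_X(T_\eps(t)f)\log(T_\eps(t)f)d\nu_\eps$ and invoking \eqref{rosso} with the dominated convergence theorem yields the limit $\int_X m_\eps(f)\log m_\eps(f)\,d\nu_\eps=\nu_\eps(X)m_\eps(f)\log m_\eps(f)$, which equals $\pa{\int_X f\,d\nu_\eps}\log m_\eps(f)$ by the definition of $m_\eps(f)$ in \eqref{average}.
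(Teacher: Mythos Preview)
Your argument is correct and genuinely different from the paper's. The paper proceeds qualitatively: it extracts a weakly convergent subsequence $T_\eps(t_k)f\rightharpoonup g_\eps$ in $L^2(X,\nu_\eps)$, shows via the gradient bound \eqref{biglietto} on the approximants $f_n$ that $g_\eps$ is $H$-invariant, and invokes \cite[Theorem 2.5.2]{Bog} to conclude that $g_\eps$ is the constant $m_\eps(f)$; since the argument is subsequence-independent, the whole family converges. Your route is quantitative: you first derive a Poincar\'e inequality for $\nu_\eps$ directly from the pointwise gradient estimate (which the paper only proves later, for $\nu$ on $\Omega$, via the log-Sobolev inequality that in turn relies on Lemma~\ref{nero}), obtain from it a spectral gap $\sigma(-L_\eps)\setminus\{0\}\subseteq[\lambda_1^{-1},\infty)$, and then use the resulting time-integrability of $\|T_\eps(t)g_0\|_{L^2}^2$ and $\|L_\eps T_\eps(t)g_0\|_{L^2}^2$ together with a Fubini/Bochner argument to get $t\mapsto T_\eps(t)g_0(x)\in H^1(1,\infty)$ for $\nu_\eps$-a.e.\ $x$. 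Your approach buys an honest $\nu_\eps$-a.e.\ limit for the full family $t\to+\infty$ (the paper's weak-compactness argument, as written, yields strong $L^2$ convergence---using $\|T_\eps(t)f\|_{L^2}^2=\langle T_\eps(2t)f,f\rangle$---but the passage to the a.e.\ statement \eqref{rosso} is not made explicit there), and it also gives the exponential rate for free. Two minor points worth tightening: the identification $\ker L_\eps=\{\text{constants}\}$ is really the same Bogachev-type fact the paper uses (that $D_Hu=0$ forces $u$ to be $\gamma$-a.e.\ constant), so cite it; and the step ``the latter is $\tfrac{d}{dt}T_\eps(t)g_0(x)$ for a.e.\ $x$'' deserves one extra sentence---fix a jointly measurable version and use the Bochner identity $T_\eps(t)g_0=T_\eps(1)g_0+\int_1^t L_\eps T_\eps(s)g_0\,ds$ together with Fubini to get the absolutely continuous representative on a single full-measure set of $x$'s.
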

\begin{proof}
First of all note that since the function $(0,1]\ni x\mapsto x\,|\log x|$ has a maximum, formula \eqref{blu} can be obtained by \eqref{rosso} and the dominated convergence theorem.
The proof of \eqref{rosso} is divided in three steps.

\noindent\emph{Step 1.} Let us show that there exists a sequence $(t_k)_{k\in\N}\subseteq [0,+\infty)$, such that $t_k\ra +\infty$ as $k\to +\infty$ and $T_\eps(t_k)f\to g_\eps$ weakly in $L^2(X,\nu_\eps)$ for some $g_\eps\in L^2(X,\nu_\eps)$, as $k$ goes to infinity. To do this, it is sufficient to consider a sequence $(t_n)_{n\in\N}$ tending to $+\infty$ as $n\to +\infty$ and to recall that $T_\eps(t_n)$ is a contraction in $L^2(X,\nu_\ve)$.

\noindent\emph{Step 2.} Here we claim that $g_\eps$ is $H$-invariant, i.e., $g_\eps(x+h)=g_\eps(x)$ for $\gamma$-a.e. $x\in X$ and for every $h\in H$.
For any $\varphi\in C_b(X)$ we have
\begin{align*}
\left|\int_X \big[g_\ve(x+h)-g_\ve(x)\big]\varphi(x)d\nu_\ve(x)\right|
&\leq\,  \left|\int_X \big[g_\ve(x+h)-(T_{\ve}(t_k)f)(x+h)\big]\varphi(x)d\nu_\ve(x)\right|\tag{$I_1$}
\\
&+ \left|\int_X \big[(T_{\ve}(t_k)f)(x+h)-(T_{\ve}(t_k)f_n)(x+h)\big]\varphi(x)d\nu_\ve(x)\right|\tag{$I_2$}\\
&+\left|\int_X \big[(T_{\ve}(t_k)f_n)(x+h)-(T_{\ve}(t_k)f_n)(x)\big]\varphi(x)d\nu_\ve(x)\right|\tag{$I_3$}\\
&+\left|\int_X \big[(T_{\ve}(t_k)f_n)(x)-(T_{\ve}(t_k)f)(x)\big]\varphi(x)d\nu_\ve(x)\right|
\tag{$I_4$}\\
&+\left|\int_X \big[(T_{\ve}(t_k)f)(x)-g_\ve(x)\big]\varphi(x)d\nu_\ve(x)\right|
\tag{$I_5$}
\end{align*}
where $(f_n)_{n\in\N}$ is the sequence in Theorem \ref{main theorem}.
The regularity of $T_{\ve}(t_k)f_n$ and \eqref{biglietto} allow us to estimate $(I_3)$ as follows
\begin{align*}
(I_3)&=\abs{\int_X \pa{\int_0^1\gen{D_H T_\eps(t_k)f_n(x+sh),h}_Hds}\varphi(x)d\nu_\eps(x)}
\\
& \leq  e^{-\lambda^{-1}_1t_k}\abs{h}_H\norm{\varphi}_\infty
\int_0^1 \int_X(T_\eps(t_k)\abs{D_H f_n}_H)(x+sh)d\nu_\eps(x)ds
\\
&\leq  e^{-\lambda^{-1}_1t_k}\abs{h}_H\norm{\varphi}_\infty
\int_0^1 \int_X\abs{D_H f_n(x+sh)}_Hd\nu_\eps(x)ds
\\
&\leq  e^{-\lambda^{-1}_1t_k}\abs{h}_H\norm{\varphi}_
\infty\Bigl(\int_0^1 \int_X\abs{D_H f(x+sh)}_Hd\nu_\eps(x)ds+M\Bigr)
\\
&\leq e^{-\lambda^{-1}_1t_k}\abs{h}_H\norm{\varphi}_\infty(\nu_\eps(X)\norm{D_H f}_\infty+M),
\end{align*}
for some positive $M$, where in the second to last line we took into account that $\|D_H f_n\|_{L^1(X,\nu_\eps;H)}$ converges to $\|D_H f\|_{L^1(X,\nu_\eps;H)}$ as $n\to +\infty$. Now, for every $\eta>0$ we can choose $k$ large enough such that $(I_1)+(I_3)+(I_5)\le \eta/2$ and $n$ such that $(I_2)+(I_4)\le \eta/2$. This proves the claim.

\noindent\emph{Step 3.} In this step we complete the proof. By \cite[Theorem 2.5.2]{Bog} a $H$-invariant function coincides $\gamma$-a.e. in $X$ (hence $\nu$-a.e. in $X$ as well) with a constant function, i.e., there exists $c\in\R$ such that $g_\eps(x)=c$ for $\gamma$-a.e. $x\in X$. We get
\begin{gather*}
c=\frac{1}{\nu_\eps(X)}\int_Xcd\nu_\eps=\frac{1}{\nu_\eps(X)}\lim_{k\ra+\infty}\int_X T_\eps(t_k)fd\nu_\eps=m_\eps(f)
\end{gather*}
where in the last equality we used the invariance of $\nu_\eps$ with respect to $T_\eps(t)$.
Since our arguments are independent of the sequence $(t_k)_{k\in\N}$, we get \eqref{rosso}.
\end{proof}

\begin{rmk}
{\rm In view of the method used in the proof, the results in Lemma \ref{nero} cannot be easily extended to the semigroup $T_\Omega(t)$. However, as we prove in Proposition \ref{cavo}, the asymptotic behaviour of $T_\Om(t)$ as $t\ra+\infty$ can be obtained also with a precise decay estimate.}
\end{rmk}

Now we are ready to prove that the measure $\nu$ satisfies a logarithmic Sobolev inequality in $\Om$. The idea of the proof is to apply the Deuschel and Stroock method (see \cite{DS90}) to the measure $\nu_\eps$ and then taking the limit as $\eps\to 0$.
\begin{prop}\label{pro_logsob}
For $p\in [1,\infty)$ and $f\in \mathcal{F}C^1_b(\Omega)$, the following inequality holds:
\begin{align}\label{logsob}
\int_\Omega\abs{f}^p\log\abs{f}^pd\nu\leq\nu(\Om)m_{\Om}(|f|^p)&\log\pa{m_\Omega (\abs{f}^p)}+\frac{p^2\lambda_1}{2}\int_\Omega\abs{f}^{p-2}\abs{D_H f}_H^2\chi_{\set{f\neq 0}}d\nu.
\end{align}
\end{prop}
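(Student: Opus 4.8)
The plan is to establish the inequality first for the regularised measures $\nu_\eps=e^{-\Phi_\eps}\gamma$ on the whole space $X$ — where the ergodic behaviour of Lemma \ref{nero} is available — and only then to let $\eps\to 0^+$. This detour through $\nu_\eps$ is essentially forced: as noted in the remark following Lemma \ref{nero}, the asymptotic analysis \eqref{rosso}--\eqref{blu} of $T_\eps(t)$ has no direct counterpart for $T_\Om(t)$, so we cannot run the entropy-dissipation argument on $\Om$ itself. Fix $f\in\mathcal{F}C^1_b(X)$ and $\delta>0$, and set $F:=(f^2+\delta)^{p/2}$; then $F\in\mathcal{F}C^1_b(X)$ is bounded and $F\ge\delta^{p/2}>0$. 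Consider the functional
\[
\Lambda(t):=\int_X (T_\eps(t)F)\log(T_\eps(t)F)\,d\nu_\eps,\qquad t\ge 0.
\]
Using the analyticity of $T_\eps(t)$, the fact that $T_\eps(t)F$ takes values in the compact interval $[\inf F,\sup F]\subset(0,+\infty)$ (so $\log$ and $1/(\cdot)$ are Lipschitz there), the invariance of $\nu_\eps$ (whence $\int_X L_\eps(\cdot)\,d\nu_\eps=0$) and the symmetry formula \eqref{inbyp}, I would differentiate under the integral to obtain
\[
\Lambda'(t)=\int_X (L_\eps T_\eps(t)F)(1+\log T_\eps(t)F)\,d\nu_\eps=-\int_X \frac{|D_H T_\eps(t)F|_H^2}{T_\eps(t)F}\,d\nu_\eps\le 0.
\]
Since $\Lambda(0)=\int_X F\log F\,d\nu_\eps$ and, by \eqref{blu} (the normalisation $F\le 1$ there being inessential, as $T_\eps(t)F$ stays in a compact subset of $(0,+\infty)$), $\Lambda(+\infty)=\nu_\eps(X)\,m_\eps(F)\log m_\eps(F)$, integrating $-\Lambda'$ over $(0,+\infty)$ gives the entropy-dissipation identity with right-hand side $\int_0^{+\infty}\!\int_X |D_H T_\eps(t)F|_H^2/(T_\eps(t)F)\,d\nu_\eps\,dt$.

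To bound this I would insert the pointwise gradient estimate \eqref{point} with $p=1$ (valid for $T_\eps(t)$ by Remark \ref{remark_Teps} and the proof of Theorem \ref{pointwise}), squared, together with the Cauchy--Schwarz inequality \eqref{holder} for the exponents $2,2$: writing $|D_H F|_H=(|D_H F|_H F^{-1/2})\,F^{1/2}$ on $\{F>0\}$ yields $(T_\eps(t)|D_H F|_H)^2\le T_\eps(t)(|D_H F|_H^2/F)\cdot T_\eps(t)F$, so that
\[
\frac{|D_H T_\eps(t)F|_H^2}{T_\eps(t)F}\le e^{-2\lambda_1^{-1}t}\,T_\eps(t)\!\left(\frac{|D_H F|_H^2}{F}\right).
\]
Integrating in $x$ removes $T_\eps(t)$ by invariance of $\nu_\eps$, and $\int_0^{+\infty}e^{-2\lambda_1^{-1}t}\,dt=\lambda_1/2$, leaving
\[
\int_X F\log F\,d\nu_\eps\le \nu_\eps(X)\,m_\eps(F)\log m_\eps(F)+\frac{\lambda_1}{2}\int_X \frac{|D_H F|_H^2}{F}\,d\nu_\eps.
\]

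It remains to strip off the two regularisations. With $F=(f^2+\delta)^{p/2}$ one computes $|D_H F|_H^2/F=p^2(f^2+\delta)^{(p-4)/2}f^2|D_H f|_H^2$, which increases monotonically to $p^2|f|^{p-2}|D_H f|_H^2\chi_{\{f\neq 0\}}$ as $\delta\downarrow 0$; by monotone convergence for this term and dominated convergence for the two bounded ones, the inequality passes to the limit and holds with $|f|^p$ in place of $F$ and the stated gradient integrand, still against $\nu_\eps$. To pass from $\nu_\eps$ to $\nu$ on $\Om$ I would use that $e^{-\Phi_\eps}=e^{-U_\eps}e^{-d_\Om^2/(2\eps)}$ decreases monotonically to $e^{-U}\chi_\Om$ for $\gamma$-a.e.\ $x$ as $\eps\downarrow 0$ — since $U_\eps\uparrow U$ by Proposition \ref{moreau-prop}(i), $d_\Om^2/(2\eps)\uparrow+\infty$ off $\overline\Om$, and $\nu(\partial\Om)=0$ by Hypothesis \ref{ipo_convex} — so that each $\int_X(\cdot)\,d\nu_\eps$ converges to $\int_\Om(\cdot)\,d\nu$, yielding \eqref{logsob}.

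Two points deserve care. The first is the rigorous justification of the differentiation of $\Lambda$ and of the two integrations by parts producing $\Lambda'$; these rest on $T_\eps(t)F$ lying in a fixed compact subinterval of $(0,+\infty)$ and on the analyticity and domain properties of $L_\eps$, and are otherwise routine. The second, and the genuine obstacle, is the bookkeeping in the double limit $\delta\to 0$, $\eps\to 0$ for the gradient term when $p<2$, where $|f|^{p-2}$ is singular at the zeros of $f$: here the monotonicity in $\delta$ of $(f^2+\delta)^{(p-4)/2}f^2$ and the monotone decrease of $e^{-\Phi_\eps}$ are exactly what legitimise the passages to the limit, the inequality remaining valid — possibly trivially — whenever the right-hand side is infinite.
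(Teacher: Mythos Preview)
Your proof is correct and follows essentially the same route as the paper: run the Deuschel--Stroock entropy-dissipation argument for $T_\eps(t)$ using \eqref{point} (for $T_\eps$) combined with the Cauchy--Schwarz instance of \eqref{holder}, integrate the exponential in $t$, invoke \eqref{blu}, and then strip off the regularisations. The only organisational difference is that the paper first restricts to $c\le f\le 1$, passes $\eps\to 0$, and afterwards approximates a general $f$ by $f_n=(1+\|f\|_\infty)^{-1}\sqrt{f^2+n^{-1}}$, whereas you regularise with $F=(f^2+\delta)^{p/2}$ from the start and take the limits in the order $\delta\to 0$ then $\eps\to 0$; both orderings work, and your monotonicity observation is exactly what the paper uses (for $p<2$) in its Step~2.
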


\begin{proof}
We split the proof in two parts. In the first part we prove the claim when $f$ satisfies some additional hypotheses and in the second part we show \eqref{logsob} in its full generality.

\noindent\emph{Step 1.} Here we prove \eqref{logsob} with $\nu$ and $\Om$ replaced by $\nu_\eps$ and $X$, and $f$ in $\mathcal{F}C^1_b(X)$ such that there exists a positive constant $c$ with $c\leq f\leq 1$. To this aim we consider the function
\begin{equation*}
F_\eps(t)=\int_X (T_\eps(t)f^p)\log (T_\eps(t)f^p)d\nu_\eps,\qquad t\geq 0.
\end{equation*}
which is well defined thanks to Proposition \ref{prop_TOm}(ii)-(iii) and Remark \ref{remark_Teps}.

 Our aim is to find a bound from below for the derivative of $F_\eps$. Indeed, we show that $F_\eps'\geq c_{1} e^{-c_{2}t}\int_X f^{p-2}\abs{D_Hf}_H^2d\nu_\eps$, for some positive constants $c_{1}$ and $c_{2}$. We start by observing that
\begin{align*}
F'_\eps(t)&=\int_X (L_\eps T_\eps(t)f^p)\log (T_\eps(t)f^p)d\nu_\eps+\int_X L_\eps T_\eps(t)f^pd\nu_\eps\\
&=-\int_X\gen{D_H T_\eps(t)f^p,D_H \log (T_\eps(t)f^p)}_Hd\nu_\eps\\
&=-\int_X\frac{\abs{D_H T_\eps(t)f^p}^2_H}{T_\eps(t)f^p}d\nu_\eps
\end{align*}
where we used that $\int_X L_\eps \varphi d\nu_\eps=0$ for any $\varphi \in D(L_{\ve})$, the definition of $L_\eps$ and the integration by parts formula.
By \eqref{holder} and Remark \ref{remark_Teps} we have $T_\eps(t)\abs{D_Hf^p}_H\leq \pa{T_\eps(t)\frac{\abs{D_Hf^p}^2_H}{f^p}}^{1/2}\pa{T_\eps (t)f^p}^{1/2}$. Hence, by using \eqref{point} we deduce
\begin{align*}
F'_\eps(t)&\geq -e^{-2\lambda_1^{-1}t}
\int_X\frac{(T_\eps(t)\abs{D_Hf^p}_H)^2}{T_\eps(t)f^p}d\nu_\eps
\geq -e^{-2\lambda_1^{-1}t}\int_X T_\eps(t)\left(\frac{\abs{D_Hf^p}^2_H}{f^p}\right)d\nu_\eps
\\
&=-e^{-2\lambda_1^{-1}t}p^2\int_X f^{p-2}\abs{D_Hf}_H^2d\nu_\eps.
\end{align*}
Integrating from $0$ to $+\infty$ and using \eqref{blu} we get
\begin{gather*}
\int_Xf^p\log f^pd\nu_\eps\leq \pa{\int_Xf^pd\nu_\eps}\log\pa{m_\eps(f^p)}+\frac{p^2\lambda_1}{2}\int_Xf^{p-2}\abs{D_H f}_H^2d\nu_\eps.
\end{gather*}
Finally letting $\ve\to 0$ and recalling that $\nu_\eps$ weakly$^*$ converges to $\chi_\Om\nu$, we get the claim.

\noindent\emph{Step 2.} Now, for any $f\in\mathcal{F}C^1_b(\Omega)$ and $n\in \N$ let consider the sequence $(f_n)_{n \in \N}$ defined by $f_n=(1+\norm{f}_\infty)^{-1}\sqrt{f^2+n^{-1}}$. Step 1 yields that
\begin{gather}\label{cable}
\int_\Omega f_n^p\log(f_n^p)d\nu\leq \pa{\int_\Omega f_n^pd\nu}\log\pa{m_\Omega (f_n^p)}+\frac{p^2\lambda_1}{2}\int_\Omega f_n^{p-2}\abs{D_H f_n}_H^2d\nu.
\end{gather}
Observing that there exists a positive constant $c_{n,p}$ such that $c_{n,p} \le f_n^p\leq 1$ for any $n\in \N$ and using the fact that the function $x\mapsto x\abs{\log x}$ is bounded in $(0,1]$, by the dominated convergence theorem the left hand side of \eqref{cable} converges to
\[
(1+\norm{f}_\infty)^{-p}\int_\Omega\abs{f}^p\log\big[(1+\norm{f}_\infty)^{-p}\abs{f}^p\big]d\nu,
\]
and the first term in the right hand side of \eqref{cable} converges to
\[
\pa{(1+\norm{f}_\infty)^{-p}\int_\Omega\abs{f}^pd\nu}\log\pa{\frac{m_\Omega(\abs{f}^p)}{(1+\norm{f}_\infty)^{p}}}.
\]
Since $|D_H f_n|_H\leq (1+\norm{f}_\infty)^{-1}|D_H f|_H$ for every $n\in\N$, by the monotone convergence theorem if $p\in[1,2)$, and by Lebesgue's dominated convergence theorem otherwise, we obtain
\begin{gather*}
\lim_{n\ra+\infty}\int_\Omega{f_n}^{p-2}\abs{D_H f_n}_H^2d\nu=
(1+\norm{f}_\infty)^{-p} \int_\Omega\abs{f}^{p-2}\abs{D_H f}_H^2\chi_{\set{f\neq 0}}d\nu.
\end{gather*}
So the statement follows letting $n$ to infinity in \eqref{cable}.
\end{proof}

As it is well known the logarithmic Sobolev inequality has several interesting consequences. Among them, we point out the following, related to our setting: once a log-Sobolev inequality with respect to the measure $\nu$ has been proved, a summability improving property of $T_\Om(t)$ follows. Indeed we are able to show that $T_{\Om}(t)$ maps $L^q(\Om, \nu)$ into $L^p(\Om, \nu)$ for some $p>q$. The technique used to prove this property is quite standard. However, for the sake of completeness, we provide a proof of it.

\begin{prop}
Let $t>0$ and $p,q\in(1,+\infty)$ be such that $p\leq (q-1)e^{2\lambda_1^{-1} t}+1$.
Then the operator $T_\Omega(t)$ maps $L^q(\Omega,\nu)$ in $L^p(\Omega,\nu)$ and
\begin{gather}\label{hyper}
\norm{T_{\Omega}(t)f}_{L^p\pa{\Omega,\nu}}\leq [\nu(\Om)]^{\frac{1}{p}-\frac{1}{q}}
\norm{f}_{L^q\pa{\Omega,\nu}},\qquad t>0,\ f\in L^q(\Omega,\nu).
\end{gather}
\end{prop}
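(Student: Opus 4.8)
The plan is to follow Gross's classical equivalence between logarithmic Sobolev inequalities and hypercontractivity, adapted to the finite (non-probability) measure $\nu$. First I would normalise by setting $\mu:=\nu/\nu(\Om)$, so that \eqref{logsob} becomes, for every $g\in\mathcal{F}C^1_b(\Om)$ and every exponent $\beta\in[1,\infty)$, the entropy inequality
\[
\int_\Om g^\beta\log g^\beta\,d\mu-\pa{\int_\Om g^\beta\,d\mu}\log\pa{\int_\Om g^\beta\,d\mu}\le \frac{\beta^2\lambda_1}{2}\int_\Om g^{\beta-2}\abs{D_H g}_H^2\chi_{\set{g\neq 0}}\,d\mu.
\]
It suffices to prove the estimate for $f\in\mathcal{F}C^1_b(\Om)$ with $0<c\le f\le C$: the general case then follows by approximation in $L^q(\Om,\nu)$, exactly as in the last part of the proofs of Theorem \ref{pointwise} and Proposition \ref{pro_logsob}, using the density of $\mathcal{F}C^1_b(\Om)$ and the fact that \eqref{hyper} passes to the limit. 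By Proposition \ref{prop_TOm}(ii)--(iii) the function $g(s):=T_\Om(s)f$ stays bounded and with positive essential infimum, so all the powers of $g$ below are well defined and lie in the natural Sobolev spaces.

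Next I would choose the moving exponent $\beta(s):=1+(q-1)e^{2\lambda_1^{-1}s}$, which satisfies $\beta(0)=q$ and the key differential identity $\beta'(s)=2\lambda_1^{-1}(\beta(s)-1)$, and set $N(s):=\norm{T_\Om(s)f}_{L^{\beta(s)}(\Om,\mu)}$. The heart of the argument is to show that $N$ is non-increasing. Writing $\Psi(s):=\int_\Om g(s)^{\beta(s)}\,d\mu$ and differentiating $\log N=\beta^{-1}\log\Psi$ one finds
\[
\frac{d}{ds}\log N(s)=\frac{1}{\beta\Psi}\pa{\Psi'-\frac{\beta'}{\beta}\Psi\log\Psi}.
\]
Since $\partial_s g=L_\Om g$ and $L_\Om$ is symmetric with respect to $\nu$ (hence to $\mu$), the integration by parts $\int_\Om g^{\beta-1}L_\Om g\,d\mu=-(\beta-1)\int_\Om g^{\beta-2}\abs{D_Hg}_H^2\,d\mu$ gives, after substituting $\int_\Om g^\beta\log g\,d\mu=\beta^{-1}[\mathrm{Ent}_\mu(g^\beta)+\Psi\log\Psi]$,
\[
\Psi'(s)=\frac{\beta'}{\beta}\sq{\mathrm{Ent}_\mu(g^\beta)+\Psi\log\Psi}-\beta(\beta-1)\int_\Om g^{\beta-2}\abs{D_Hg}_H^2\,d\mu,
\]
where $\mathrm{Ent}_\mu(g^\beta)$ denotes the left-hand side of the entropy inequality above. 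Inserting that inequality with exponent $\beta$ and using $\beta'=2\lambda_1^{-1}(\beta-1)$, the coefficient of $\int_\Om g^{\beta-2}\abs{D_Hg}_H^2\,d\mu$ collapses to $\frac{\beta'\beta\lambda_1}{2}-\beta(\beta-1)=0$, whence $\Psi'-\frac{\beta'}{\beta}\Psi\log\Psi\le 0$ and therefore $N'\le 0$.

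Consequently $N(t)\le N(0)$, i.e. $\norm{T_\Om(t)f}_{L^{\beta(t)}(\Om,\mu)}\le\norm{f}_{L^q(\Om,\mu)}$ with $\beta(t)=(q-1)e^{2\lambda_1^{-1}t}+1$. For the given $p\le\beta(t)$, the monotonicity of $L^r$-norms on the probability space $(\Om,\mu)$ yields $\norm{T_\Om(t)f}_{L^p(\Om,\mu)}\le\norm{f}_{L^q(\Om,\mu)}$, and rewriting the $\mu$-norms through $\norm{h}_{L^r(\Om,\mu)}=\nu(\Om)^{-1/r}\norm{h}_{L^r(\Om,\nu)}$ produces precisely the factor $[\nu(\Om)]^{1/p-1/q}$ appearing in \eqref{hyper}. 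The main obstacle I anticipate is the rigorous justification of the differentiation step: one must guarantee that $s\mapsto\Psi(s)$ is differentiable and that both the integration by parts and the application of the entropy inequality to the non-cylindrical function $g(s)$ are legitimate. I would handle this by exploiting that $f$, and hence $g(s)$, is bounded away from $0$ and $\infty$, that $T_\Om(s)f\in D^{2,2}(\Om,\nu)$ with $D_HT_\Om(s)f$ controlled by \eqref{point} and \eqref{stima_radice}, and—if needed—by first running the whole computation on the smooth approximants $T_{\eps}(s)f_n$ of Theorem \ref{main theorem} and passing to the limit, exactly as was done for \eqref{logsob} itself.
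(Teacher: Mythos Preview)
Your proposal is correct and follows essentially the same route as the paper: both set the moving exponent $\beta(s)=1+(q-1)e^{2\lambda_1^{-1}s}$, differentiate $s\mapsto\|T_\Om(s)f\|_{L^{\beta(s)}}$ (with the measure normalised to a probability), use the integration by parts \eqref{defn_LOm} together with the logarithmic Sobolev inequality \eqref{logsob} to show this quantity is non-increasing, and then descend from $\beta(t)$ to $p$ via H\"older/monotonicity of $L^r$-norms. Your normalisation $\mu=\nu/\nu(\Om)$ is merely cosmetic, and your caveat about justifying the differentiation and the use of \eqref{logsob} on the non-cylindrical function $T_\Om(s)f$ matches the level of detail in the paper, which simply notes that $T_\Om(s)f\in D^{1,2}(\Om,\nu)\cap L^\infty(\Om,\nu)$ with positive essential infimum.
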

\begin{proof}
Let $f\in\mathcal{F}C^1_b(\Omega)$, with a positive global infimum, and let $p(t):=(q-1)e^{2\lambda_1^{-1} t}+1$. For $s\geq 0$ we set
\[
G(s):=\pa{\frac{1}{\nu(\Omega)}\int_\Omega(T_\Omega(s)f)^{p(s)}d\nu}^{1/p(s)}=:\pa{\frac{1}{\nu(\Omega)}F(s)}^{1/p(s)}
\]
and we prove that $G(s)$ is a non-increasing function. Before starting we want to recall that $T_\Omega(s)$ maps $\mathcal{F}C^1_b(\Omega)$ into $D^{1,2}(\Omega,\nu)\cap L^\infty(\Omega,\nu)$, due to the  definition of the operator $T_\Omega(s)$ and Proposition \ref{prop_TOm}(ii). This guarantees that all the integrals we are going to write are well defined and finite. So, using \eqref{defn_LOm}, we get
\begin{align}\label{333}
F'(s)&=p'(s)\int_\Omega(T_\Omega(s)f)^{p(s)}\log(T_\Omega(s)f)d\nu-p(s)(p(s)-1)
\int_\Omega (T_\Omega(s)f)^{p(s)-2}\abs{D_HT_\Omega(s)f}_H^2d\nu.
\end{align}
Now we set $u(s):=T_\Omega(s)f$ and we differentiate the function $G$. Taking into account \eqref{333}, we get
\begin{gather*}
G'=\, G\Biggl(-\frac{p'}{p^2}\log (m_\Omega(u^p))+\frac{1}{p\int_\Omega u^{p}d\nu}\pa{p'\int_\Omega u^{p}\log ud\nu-p(p-1)\int_\Omega u^{p-2}\abs{D_Hu}_H^2d\nu}\Biggr)
\\
=\, G\frac{p'}{p^2\int_\Omega u^{p}d\nu}\pa{-\pa{\int_\Omega u^{p}d\nu}\log \pa{m_\Omega(u^{p})}+\int_\Omega u^{p}\log u^pd\nu}-\frac{G(p-1)}{\int_\Omega u^{p}d\nu}\int_\Omega u^{p-2}\abs{D_Hu}_H^2d\nu.
\end{gather*}
Since $p'(s)=2\lambda_1^{-1}(q-1)e^{2\lambda_1^{-1}s}\geq0$ we can apply \eqref{logsob} to get
\begin{gather*}
G'(s)\leq (G(s))^{1-p(s)}\pa{\frac{p'(s)\lambda_1}{2}-(p(s)-1)}\int_\Omega (T_\Omega(s)f)^{p(s)-2}\abs{D_HT_\Omega(s)f}_H^2d\nu=0.
\end{gather*}
This proves that $G$ is a decreasing function, which means that $G(0)\geq G(t)$ for every $t>0$, i.e.,
\begin{gather*}
\norm{T_{\Omega}(t)f}_{L^{p(t)}\pa{\Omega,\nu}}\leq [\nu(\Omega)]^{\frac{1}{p(t)}-\frac{1}{q}}\norm{f}_{L^q\pa{\Omega,\nu}}.
\end{gather*}
So we get \eqref{hyper} for a function $f\in \mathcal{F}C^1_b(\Omega)$ with positive global infimum. Indeed, if $p<p(t)$
\begin{align*}
\norm{T_\Omega(t)f}_{L^p(\Omega,\nu)}&
\leq [\nu(\Omega)]^{\frac{p(t)-p}{p(t)p}}\norm{T_\Omega(t)f}_{L^{p(t)}(\Omega,\nu)}
\\
&\leq [\nu(\Omega)]^{\frac{p(t)-p}{p(t)p}}[\nu(\Omega)]^{\frac{1}{p(t)}
-\frac{1}{q}}\norm{f}_{L^{q}(\Omega,\nu)}= [\nu(\Omega)]^{\frac{1}{p}-\frac{1}{q}}\norm{f}_{L^{q}(\Omega,\nu)}.
\end{align*}
Arguing as in the second step of the proof of Proposition \ref{pro_logsob} we obtain \eqref{hyper} for a general $f\in \mathcal{F}C^1_b(\Omega)$. The density of the space $\mathcal{F}C^1_b(\Omega)$ in $L^q\pa{\Omega,\nu}$ allows us to conclude the proof.
\end{proof}

From the logarithmic Sobolev inequality follows the asymptotic behaviour of $T_\Omega(t)f$ as $t$ goes to infinity, whenever $f$ belongs to $L^2(\Om, \nu)$. This can be done thanks to the Poincar\'e inequality.

\begin{prop}
Let $p\in[2,\infty)$ and $f\in D^{1,p}(\Omega,\nu)$. Then
\begin{gather}\label{poin}
\norm{f-m_\Omega(f)}_{L^p(\Omega,\nu)}\leq K\norm{D_H f}_{L^p(\Omega,\nu;H)},
\end{gather}
where $K$ is a positive constant depending only on $p$, $\lambda_1$ and $\nu(\Omega)$. Furthermore if $p=2$ then $K=\lambda_1^{1/2}$.
\end{prop}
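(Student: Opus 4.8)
The plan is to prove the inequality first for cylindrical $f\in\mathcal{F}C^1_b(X)$ at the level of the penalised measures $\nu_\eps=e^{-\Phi_\eps}\gamma$ and the approximating semigroups $T_\eps(t)$, for which the asymptotic behaviour of Lemma \ref{nero} is available, and then to let $\eps\to 0$ exploiting the weak$^*$ convergence $\nu_\eps\rightharpoonup\chi_\Om\nu$; a final density argument will remove the regularity restriction. I would treat the whole range $p\in[2,\infty)$ in one computation, the case $p=2$ being its specialisation.

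Fix $f\in\mathcal{F}C^1_b(X)$, set $g_\eps:=f-m_\eps(f)\in\mathcal{F}C^1_b(X)$, so that $m_\eps(g_\eps)=0$ and $D_Hg_\eps=D_Hf$, and write $u:=T_\eps(t)g_\eps$. Since $|\cdot|^p\in C^1(\R)$ for $p\ge 2$ and $u\in D(L_\eps)\cap L^\infty(X,\nu_\eps)$, I would differentiate
\[
F_\eps(t):=\norm{T_\eps(t)g_\eps}_{L^p(X,\nu_\eps)}^p=\int_X\abs{u}^p\,d\nu_\eps ,
\]
and, using $\tfrac{d}{dt}u=L_\eps u$ together with the integration by parts formula \eqref{inbyp} applied to $\psi_1=\abs{u}^{p-2}u\in D^{1,2}(X,\nu_\eps)$ and $\psi_2=u$, obtain
\[
F_\eps'(t)=-p(p-1)\int_X\abs{u}^{p-2}\abs{D_Hu}_H^2\,d\nu_\eps\le 0 .
\]

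Next I would invoke Lemma \ref{nero}: since $T_\eps(t)g_\eps=T_\eps(t)f-m_\eps(f)\to 0$ $\nu_\eps$-a.e. by \eqref{rosso}, and $\abs{T_\eps(t)g_\eps}\le\norm{g_\eps}_\infty$, dominated convergence gives $F_\eps(t)\to 0$ as $t\to+\infty$. Integrating $F_\eps'$ on $(0,+\infty)$ then yields
\[
\norm{g_\eps}_{L^p(X,\nu_\eps)}^p=p(p-1)\int_0^{+\infty}\!\!\int_X\abs{u}^{p-2}\abs{D_Hu}_H^2\,d\nu_\eps\,dt .
\]
For the inner integral I would use the Hölder inequality with exponents $\tfrac{p}{p-2}$ and $\tfrac{p}{2}$, followed by the pointwise estimate \eqref{point} (valid for $T_\eps$, as established inside the proof of Theorem \ref{pointwise} and by Remark \ref{remark_Teps}) and the invariance of $\nu_\eps$, which give $\norm{D_HT_\eps(t)g_\eps}_{L^p(X,\nu_\eps;H)}\le e^{-\lambda_1^{-1}t}\norm{D_Hf}_{L^p(X,\nu_\eps;H)}$ and $\norm{u}_{L^p(X,\nu_\eps)}\le\norm{g_\eps}_{L^p(X,\nu_\eps)}$. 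Since $\int_0^{+\infty}e^{-2\lambda_1^{-1}t}\,dt=\lambda_1/2$, dividing by $\norm{g_\eps}_{L^p(X,\nu_\eps)}^{p-2}$ leaves
\[
\norm{g_\eps}_{L^p(X,\nu_\eps)}^2\le\frac{p(p-1)\lambda_1}{2}\,\norm{D_Hf}_{L^p(X,\nu_\eps;H)}^2 .
\]

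Finally I would let $\eps\to 0$: because $\nu(\partial\Om)=0$ (Hypothesis \ref{ipo_convex}) and $\nu_\eps\rightharpoonup\chi_\Om\nu$ weakly$^*$, testing against the bounded continuous functions $1$, $\abs{D_Hf}_H^p$ and $\abs{f-c}^p$ gives convergence of the corresponding integrals, whence $m_\eps(f)\to m_\Om(f)$ and both sides above converge to their counterparts over $(\Om,\nu)$; this proves \eqref{poin} with $K=\sqrt{p(p-1)\lambda_1/2}$ for every $f\in\mathcal{F}C^1_b(\Om)$, and specialises to $K=\lambda_1^{1/2}$ when $p=2$. A routine approximation in $D^{1,p}(\Om,\nu)$, using the continuity of $m_\Om$ on $L^1(\Om,\nu)$, then yields the general case. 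I expect the genuinely delicate point to be that the convergence $T_\Om(t)g\to 0$ is \emph{not} directly available for the true semigroup $T_\Om$ (as flagged in the Remark following Lemma \ref{nero}), which is exactly why the computation must be performed for the approximants $T_\eps$, where \eqref{rosso} holds, and then transferred through the weak$^*$ limit; secondary care is required in differentiating $F_\eps$ under the integral sign and in justifying the chain rule $D_H(\abs{u}^{p-2}u)=(p-1)\abs{u}^{p-2}D_Hu$, which relies on $p\ge 2$ and the boundedness of $u$.
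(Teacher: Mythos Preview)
Your argument is correct and takes a genuinely different route from the paper. The paper first derives the $p=2$ case from the logarithmic Sobolev inequality \eqref{logsob} via the Rothaus linearisation trick (setting $f_\eta=1+\eta(f-m_\Omega(f))$ and expanding $(1+\xi)^2\log(1+\xi)^2$ to second order), and then bootstraps to $p>2$ by applying the $p=2$ inequality to $|g|^{p/2}$, combining with Young's inequality, and inducting over dyadic intervals $(2,4]$, $(4,8]$, etc. Your approach bypasses log-Sobolev entirely: differentiating $t\mapsto\|T_\eps(t)g_\eps\|_{L^p(X,\nu_\eps)}^p$, integrating by parts, applying H\"older together with the gradient decay for $T_\eps$, and then integrating in time handles all $p\ge 2$ in a single computation and yields the explicit constant $K=\sqrt{p(p-1)\lambda_1/2}$, which---unlike the paper's constant---does \emph{not} depend on $\nu(\Omega)$. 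Both proofs ultimately rest on the same two ingredients (Lemma~\ref{nero} and the pointwise gradient estimate for the approximating semigroups); the paper packages them first into log-Sobolev and then extracts Poincar\'e, whereas you use them directly. One small caution: the paper states \eqref{point} only for $T_\Omega$, the intermediate pointwise inequality for $T_\eps$ appearing only for the approximants $f_n$ via \eqref{biglietto}; to obtain it for a general $f\in\mathcal{F}C^1_b(X)$ you need one further density step through Theorem~\ref{main theorem}(i), which is routine but should be made explicit.
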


\begin{proof}
We divide the proof in two steps. In the first step we prove \eqref{poin} for $p=2$, while in the second step we prove the claim for $p\in(2,\infty)$.

\noindent \emph{Step 1.} We use an idea of \cite{Rot81} (see also \cite[Theorem 5.2]{ALL13}). Let $f\in \mathcal{F}C^1_b(\Omega)$, $\eta>0$ and consider the function $f_\eta=1+\eta(f-m_\Omega(f))$. Recalling that $(1+\xi)^2\log(1+\xi)^2=2\xi+3\xi^2+o(\xi^2)$ as $\xi\to 0$, we get
\begin{gather*}
\int_\Omega f_\eta^2\log f_\eta^2d\nu-\pa{\int_\Omega f^2_\eta d\nu}\log\pa{m_\Omega(f^2_\eta)}=2\eta^2\int_\Omega \pa{f-m_\Omega(f)}^2d\nu+o(\eta^2).
\end{gather*}
By \eqref{logsob}, with $p=2$ and $f$ replaced by $f_\eta$, we get
\begin{gather*}
2\eta^2 \int_\Omega \pa{f-m_\Omega(f)}^2d\nu+o(\eta^2)\leq 2\lambda_1\int_\Omega \abs{D_H f_\eta}^2_Hd\nu=2\lambda_1\eta^2\int_\Omega \abs{D_H f}^2_Hd\nu.
\end{gather*}
Letting $\eta\ra 0^+$ we get \eqref{poin} for a function $f$ belonging to $\mathcal{F}C^1_b(\Omega)$. Then by the density of $\mathcal{F}C^1_b(\Omega)$ in $D^{1,2}(\Omega,\nu)$ we get
\begin{gather}\label{poin2}
\int_\Omega \pa{f-m_\Omega(f)}^2d\nu\leq \lambda_1\int_\Omega \abs{D_H f}^2_Hd\nu,\qquad f\in D^{1,2}(\Omega,\nu).
\end{gather}

\noindent \emph{Step 2.} Now let assume that $p\in(2,\infty)$. If $g\in D^{1,p}(\Omega,\nu)$, then $|g|^{p/2}\in D^{1,2}(\Omega,\nu)$. This can be seen by approximating $g$ by a sequence of functions in $\mathcal{F}C^{1}_b(\Omega)$, which is dense in $D^{1,p}(\Omega,\nu)$. Applying \eqref{poin2}, with $f$ replaced by $|g|^{p/2}$, we get
\begin{gather}\label{xxx}
\int_\Omega |g|^p d\nu-\frac{1}{\nu(\Omega)}\pa{\int_\Omega |g|^{p/2}d\nu}^2\leq \frac{\lambda_1 p^2}{4}\int_\Omega |g|^{p-2}|D_H g|_H^2 d\nu
\end{gather}
Applying the Young inequality to the right hand side of \eqref{xxx}, for every $\eta>0$ we have
\begin{gather*}
\int_\Omega |g|^p d\nu\leq \frac{\lambda_1 p(p-2)\eta^{p/(p-2)}}{4}\int_\Omega |g|^{p}d\nu+\frac{\lambda_1 p}{2\eta^{p/2}}\int_\Omega |D_H g|_H^p d\nu+\frac{1}{\nu(\Omega)}\pa{\int_\Omega |g|^{p/2}d\nu}^2.
\end{gather*}
Choosing $\eta>0$ such that $\eta^{p/(p-2)}\leq 4/(\lambda_1 p(p-2))$ and $K(p,\eta):=1-(\lambda_1 p(p-2)\eta^{p/(p-2)})/4$ we deduce
\begin{gather}\label{macch}
K(p,\eta)\int_\Omega |g|^p d\nu\leq \frac{\lambda_1 p}{2\eta^{p/2}}\int_\Omega |D_H g|_H^p d\nu+\frac{1}{\nu(\Omega)}\pa{\int_\Omega |g|^{p/2}d\nu}^2.
\end{gather}
Now we proceed by induction. If $p\in(2,4)$, then
\[
\int_\Omega |g|^{p/2}d\nu\leq \pa{\int_\Omega |g|^2d\nu}^{p/4}[\nu(\Omega)]^{(4-p)/4}
\]
and so by \eqref{macch}, for every $p\in(2,4]$
\begin{gather*}
K(p,\eta)\int_\Omega |g|^p d\nu\leq \frac{\lambda_1 p}{2\eta^{p/2}}\int_\Omega |D_H g|_H^p d\nu+\frac{1}{[\nu(\Omega)]^{(2-p)/2}}\pa{\int_\Omega |g|^{2}d\nu}^{p/2}.
\end{gather*}
If we let $g=f-m_\Omega(f)$ for a function $f\in D^{1,p}(\Omega,\nu)$ we get
\begin{align*}
K(p,\eta)\int_\Omega \abs{f-m_\Omega(f)}^p d\nu\leq&\,
\frac{\lambda_1 p}{2\eta^{p/2}}\int_\Omega |D_H f|_H^p d\nu+\frac{1}{[\nu(\Omega)]^{(2-p)/2}}\pa{\int_\Omega \abs{f-m_\Omega(f)}^{2}d\nu}^{p/2}.
\end{align*}
By \eqref{poin2} we get
\begin{gather}\label{accccc}
K(p,\eta)\int_\Omega \abs{f-m_\Omega(f)}^p d\nu\leq \frac{\lambda_1 p}{2\eta^{p/2}}\int_\Omega |D_H f|_H^p d\nu+\frac{\lambda_1^{p/2}}{[\nu(\Omega)]^{(p^2-4)/(2p)}}\int_\Omega |D_H f|^{p}d\nu,
\end{gather}
which proves the statement when $p\in(2,4]$. Now let $p\in(4,8]$. For any $f\in D^{1,p}(\Omega,\nu)$ we apply \eqref{macch} to the function $g=f-m_\Omega(f)$, and since $p/2\in (2,4]$, we can use \eqref{accccc} with $p/2$ instead of $p$, to get the thesis for $p\in (4,8]$. Iterating the above procedure we conclude the proof.
\end{proof}

A standard consequence of the Poincar\'e inequality is the convergence of $T_\Omega(t)f$ to $m_\Omega(f)$ (see \eqref{average}) in $L^2(\Om, \nu)$, as the following exponential decay estimate shows.

\begin{cor}
If $f\in L^2(\Omega,\nu)$, then
\begin{equation}\label{decay-2}
\norm{T_\Omega(t)f-m_\Omega(f)}_{L^2(\Omega,\nu)}\leq e^{-\lambda_1^{-1}t}\norm{f}_{L^2(\Omega,\nu)}.
\end{equation}
As a consequence for every $f\in L^2(\Omega,\nu)$ it holds
\begin{gather*}
\lim_{t\ra+\infty} T_\Omega(t)f=m_\Omega(f),\qquad\nu\text{\rm-a.e. in } \Omega.
\end{gather*}
\end{cor}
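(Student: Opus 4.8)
The plan is to establish the quantitative decay \eqref{decay-2} by a standard energy estimate fed by the Poincaré inequality \eqref{poin2}, and then to deduce the pointwise convergence from it. First I would reduce to a zero-mean datum. The constant function $1$ belongs to $D^{1,2}(\Om,\nu)$ and satisfies $L_\Om 1=0$ (directly from \eqref{defn_LOm}), so constants are fixed points of $T_\Om(t)$; writing $g:=f-m_\Omega(f)$ we then have $T_\Om(t)f-m_\Omega(f)=T_\Om(t)g$. Moreover $m_\Omega(f)$ is exactly the orthogonal projection of $f$ onto the constants in $L^2(\Om,\nu)$, whence $\|g\|_{L^2(\Om,\nu)}\le\|f\|_{L^2(\Om,\nu)}$ and $m_\Omega(g)=0$. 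Finally, invariance of $\nu$ (Proposition \ref{prop_TOm}(vi) with the second function equal to $1$, using $T_\Om(t)1=1$) gives $m_\Omega(T_\Om(t)g)=m_\Omega(g)=0$ for every $t>0$.

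Next I would set $u(t):=T_\Om(t)g$ and $\varphi(t):=\|u(t)\|_{L^2(\Om,\nu)}^2$. By analyticity of $T_\Om(t)$ in $L^2(\Om,\nu)$ (Proposition \ref{prop_TOm}(i)) one has $u(t)\in D(L_\Om)\subseteq D^{1,2}(\Om,\nu)$ and $u'(t)=L_\Om u(t)$ for $t>0$, so that, using \eqref{defn_LOm} extended from $\mathcal{F}C_b^\infty(\Om)$ to $D^{1,2}(\Om,\nu)$ by density,
\[
\varphi'(t)=2\int_\Om u(t)\,L_\Om u(t)\,d\nu=-2\int_\Om |D_H u(t)|_H^2\,d\nu,\qquad t>0.
\]
Since $m_\Omega(u(t))=0$, the Poincaré inequality \eqref{poin2} yields $\varphi(t)\le \lambda_1\int_\Om|D_H u(t)|_H^2\,d\nu$, hence $\varphi'(t)\le -2\lambda_1^{-1}\varphi(t)$. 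Integrating this differential inequality on $(0,t)$ and letting the lower endpoint tend to $0$, where $\varphi(0^+)=\|g\|_{L^2(\Om,\nu)}^2$ by strong continuity, gives $\varphi(t)\le e^{-2\lambda_1^{-1}t}\|g\|_{L^2(\Om,\nu)}^2$. Taking square roots and using $\|g\|_{L^2(\Om,\nu)}\le\|f\|_{L^2(\Om,\nu)}$ produces exactly \eqref{decay-2}.

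For the pointwise statement, evaluating \eqref{decay-2} at integer times gives $\sum_{n\ge 1}\|T_\Om(n)f-m_\Omega(f)\|_{L^2(\Om,\nu)}^2<+\infty$, so $\sum_n\pa{T_\Om(n)f-m_\Omega(f)}^2$ is $\nu$-integrable on $\Om$, hence finite $\nu$-a.e., forcing $T_\Om(n)f\to m_\Omega(f)$ $\nu$-a.e.\ as $n\to+\infty$. To pass from the sequence $t=n$ to the continuous limit I would write, for $t\in[n,n+1]$, $|T_\Om(t)f-m_\Omega(f)|=|T_\Om(t-n)(T_\Om(n)f-m_\Omega(f))|\le T_\Om(t-n)|T_\Om(n)f-m_\Omega(f)|$ by positivity (Proposition \ref{prop_TOm}(ii)), and control $\sup_{0\le s\le 1}T_\Om(s)|T_\Om(n)f-m_\Omega(f)|$ through a maximal inequality for the symmetric Markov semigroup $T_\Om(t)$. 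The main obstacle is precisely this last upgrade: the $L^2$ decay together with the summability argument only delivers a.e.\ convergence along the discrete net $t=n$, and making the full continuous-parameter a.e.\ convergence rigorous requires such a maximal estimate (alternatively one contents oneself with $L^2$ convergence and a.e.\ convergence along a subsequence). The remaining steps are routine.
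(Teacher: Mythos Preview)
Your argument for the $L^2$ decay \eqref{decay-2} is correct and is essentially the paper's own: both compute the derivative of $\|T_\Omega(t)f-m_\Omega(f)\|_{L^2(\Omega,\nu)}^2$, use \eqref{defn_LOm} to rewrite it as $-2\int_\Omega|D_H T_\Omega(t)f|_H^2\,d\nu$, invoke Poincar\'e \eqref{poin2}, and integrate the resulting differential inequality. You subtract the mean beforehand while the paper keeps $f$ and uses $m_\Omega(T_\Omega(s)f)=m_\Omega(f)$ inside the computation, but this is only cosmetic.

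On the $\nu$-a.e.\ convergence: your hesitation is well founded, and in fact the paper itself offers no argument for this part---its proof ends after \eqref{decay-2} with ``This concludes the proof''. So there is nothing to compare against. Your outline (a.e.\ convergence along integers from the summability of $\|T_\Omega(n)f-m_\Omega(f)\|_{L^2}$, then a maximal inequality to pass to the continuous parameter) is the standard route and works: since $T_\Omega(t)$ is a self-adjoint sub-Markovian semigroup on $L^2(\Omega,\nu)$, Stein's maximal theorem gives $\bigl\|\sup_{0\le s\le 1}|T_\Omega(s)h|\bigr\|_{L^2}\le C\|h\|_{L^2}$, and applying this with $h=T_\Omega(n)f-m_\Omega(f)$ (again summable in $L^2$ over $n$) yields $\sup_{t\ge n}|T_\Omega(t)f-m_\Omega(f)|\to 0$ $\nu$-a.e. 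If you prefer to avoid the maximal inequality, the $L^2$ estimate alone only gives a.e.\ convergence along a subsequence, which is strictly weaker than the stated conclusion.
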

\begin{proof}
Let $G(s)=\int_\Omega\pa{T_\Omega(s)f-m_\Omega(f)}^2d\nu$. Using \eqref{defn_LOm} and \eqref{poin} we get
\begin{align*}
G'(s)&=\frac{d}{ds}\int_\Omega \pa{T_\Omega(s)f-m_\Omega(f)}^2d\nu
=2\int_\Omega (T_\Omega(s)f)(L_\Omega T_\Omega(s)f)d\nu
\\
&=-2\int_\Omega\abs{D_H T_\Omega(s)f}^2_Hd\nu\leq -\frac{2}{\lambda_1}\int_\Omega \pa{T_\Omega(s)f-m_\Omega(T_\Omega(s)f)}^2d\nu
\\
&=-\frac{2}{\lambda_1}\int_\Omega \pa{T_\Omega(s)f-m_\Omega(f)}^2d\nu= -\frac{2}{\lambda_1} G(s).
\end{align*}
Thus $G(t)\leq e^{-2\lambda_1^{-1}t}G(0)$, which means
\begin{align*}
\int_\Omega\pa{T_\Omega(t)f-m_\Omega(f)}^2d\nu\leq&\,
e^{-2\lambda_1^{-1}t}\int_\Omega\pa{f-m_\Omega(f)}^2d\nu
\\
=&\, e^{-2\lambda_1^{-1}t}\sq{\int_\Omega f^2d\nu-2\frac{1}{\nu(\Omega)}\pa{\int_\Omega fd\nu}^2+\frac{1}{\nu(\Omega)}\pa{\int_\Omega fd\nu}^2}
\\
\leq&\, e^{-2\lambda_1^{-1}t}\int_\Omega f^2d\nu.
\end{align*}
This concludes the proof.
\end{proof}

Once the Poincar\'e inequality, with $p=2$, the gradient estimate \eqref{stima_radice} and a hypercontractivity type estimate like \eqref{hyper} are available we can establish a relationship between the asymptotic behaviour of $T_\Omega(t)f$ and that of $|D_HT_\Omega(t)f|_H$ as $t\to+\infty$, whenever $f\in L^p(\Om,\nu)$, $p\in(1,\infty)$. More precisely, arguing as in \cite[Theorem 5.3]{ALL13} we can prove the following result, that extends the decay estimate \eqref{decay-2} to any $p\in(1,\infty)$. We skip the proof due to its length and the fact that it does not present any substantial difference with the one contained in \cite[Theorem 5.3]{ALL13}

\begin{prop}\label{cavo}
For any $p\in(1,\infty)$, consider the sets
\begin{align*}
{\mathcal A}_p=\Big\{&\omega\in\R\,\Big|\,\exists M_{p,\omega}>0 \text{ s.t. }\norm{T_\Omega(t)f-m_\Omega(f)}_{L^p(\Omega,\nu)}\leq M_{p,\omega}e^{\omega t}\norm{f}_{L^p(\Omega,\nu)},
\\
&t>0,\ f\in L^p(\Omega,\nu)\Big\};
\\
\mathcal{B}_p=\Big\{&\omega\in\R\,\Big|\,\exists N_{p,\omega}>0 \text{ s.t. }\norm{D_HT_\Omega(t)f}_{L^p(\Omega,\nu;H)}\leq N_{p,\omega}e^{\omega t}\norm{f}_{L^p(\Omega,\nu)},
\\
&t>1,\ f\in L^p(\Omega,\nu)\Big\}.
\end{align*}
Then the sets $\mathcal{A}_p$ and $\mathcal{B}_p$ are independent of $p$ and they coincide. In particular, by Corollary \ref{esp-decay-grad}, for any $p\in(1,\infty)$ there exists a positive constant $K_{p,\lambda_1}$, depending only on $p$ and $\lambda_1$, such that for every $t>0$ and $f\in L^p(\Omega,\nu)$, the inequality
\begin{equation*}
\norm{T_\Omega(t)f-m_\Omega(f)}_{L^p(\Omega,\nu)}\leq K_{p,\lambda_1} e^{-\lambda_1^{-1}t}\norm{f}_{L^p(\Omega,\nu)}
\end{equation*}
holds. As a consequence, for every $p\in(1,\infty)$ and $f\in L^p(\Omega,\nu)$
\[
\lim_{t\ra+\infty} T_\Omega(t)f = m_\Omega(f),\qquad\nu\text{-a.e.\,\, in\ }\Omega.
\]
\end{prop}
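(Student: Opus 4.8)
The plan is to establish the abstract identity $\mathcal{A}_p=\mathcal{B}_p$ together with its independence of $p$, and then to read off the explicit rate and the pointwise limit. First I would fix $p$ and compare the two families. Since constants are fixed by $T_\Omega(t)$ and $\nu$ is invariant, one has $m_\Omega(T_\Omega(t)f)=m_\Omega(f)$ and $T_\Omega(t)f-m_\Omega(f)=T_\Omega(t)(f-m_\Omega(f))$. Hence for $t>1$, $D_HT_\Omega(t)f=D_HT_\Omega(1)\big[T_\Omega(t-1)f-m_\Omega(f)\big]$, and \eqref{stima_radice} at time $1$ gives $\norm{D_HT_\Omega(t)f}_{L^p(\Omega,\nu;H)}\le K_p^{1/p}\norm{T_\Omega(t-1)f-m_\Omega(f)}_{L^p(\Omega,\nu)}$; if $\omega\in\mathcal{A}_p$ the right-hand side is at most $K_p^{1/p}M_{p,\omega}e^{-\omega}e^{\omega t}\norm{f}_{L^p(\Omega,\nu)}$, so $\omega\in\mathcal{B}_p$ and $\mathcal{A}_p\subseteq\mathcal{B}_p$ for every $p$. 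For the reverse inclusion when $p\ge 2$ I would apply the Poincar\'e inequality \eqref{poin} to $g=T_\Omega(t)f$, which lies in $D^{1,p}(\Omega,\nu)$ by Theorem \ref{chiavi1}; using $m_\Omega(T_\Omega(t)f)=m_\Omega(f)$ this yields $\norm{T_\Omega(t)f-m_\Omega(f)}_{L^p(\Omega,\nu)}\le K\norm{D_HT_\Omega(t)f}_{L^p(\Omega,\nu;H)}$, whence $\mathcal{B}_p\subseteq\mathcal{A}_p$ for $p\ge 2$ (the times $t\in(0,1]$ being absorbed into the constant by contractivity).

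Next I would prove that each family is independent of $p$, which in particular closes the range $p\in(1,2)$. The two ingredients are H\"older's inequality on the finite measure $\nu$, lowering integrability for free via $\norm{g}_{L^r}\le\nu(\Omega)^{1/r-1/s}\norm{g}_{L^s}$ for $r<s$, and the hypercontractivity estimate \eqref{hyper}, raising integrability at the cost of a fixed time shift $t_0$ via $\norm{T_\Omega(t_0)g}_{L^s}\le\nu(\Omega)^{1/s-1/r}\norm{g}_{L^r}$ once $s\le(r-1)e^{2\lambda_1^{-1}t_0}+1$. Given $\omega\in\mathcal{A}_q$ and any target exponent $p$, I would split the semigroup as $T_\Omega(t)=T_\Omega(t_0)T_\Omega(t-t_0)$ (or $T_\Omega(t-t_0)T_\Omega(t_0)$), use mean-invariance to keep $m_\Omega(f)$ fixed, apply $\omega\in\mathcal{A}_q$ to the regularised datum $T_\Omega(t_0)f$, and balance the two powers of $\nu(\Omega)$; the shift contributes only the harmless factor $e^{-\omega t_0}$, so $\omega\in\mathcal{A}_p$. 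The same bookkeeping, now carried out on the gradient with \eqref{stima_radice} and \eqref{hyper}, gives the $p$-independence of $\mathcal{B}_p$. Combining this with the fixed-$p$ inclusions at $p=2$, where $\mathcal{A}_2=\mathcal{B}_2$, yields $\mathcal{A}_p=\mathcal{A}_2=\mathcal{B}_2=\mathcal{B}_p$ for every $p$.

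With the abstract identity in hand, the explicit decay is immediate: the $L^2$-estimate \eqref{decay-2} asserts exactly $-\lambda_1^{-1}\in\mathcal{A}_2$, hence $-\lambda_1^{-1}\in\mathcal{A}_p$ for all $p$ by the $p$-independence, which is the stated inequality with constant $K_{p,\lambda_1}$. (Equivalently one may start from $-\lambda_1^{-1}\in\mathcal{B}_p$, which is Corollary \ref{esp-decay-grad}, and transfer to $\mathcal{A}_p$.) For the pointwise assertion I would use the exponential decay along integer times, so that $\sum_{n}\norm{T_\Omega(n)f-m_\Omega(f)}_{L^p(\Omega,\nu)}^p<+\infty$; the general term then tends to $0$ $\nu$-a.e. by a Borel--Cantelli argument, and the behaviour on each interval $[n,n+1]$ is controlled by contractivity precisely as in the proof following \eqref{decay-2}, giving $\lim_{t\to+\infty}T_\Omega(t)f=m_\Omega(f)$ $\nu$-a.e. in $\Omega$.

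I expect the main obstacle to be the $p$-independence in the low range $p\in(1,2)$, where the Poincar\'e inequality \eqref{poin} is unavailable, so that $\mathcal{B}_p\subseteq\mathcal{A}_p$ cannot be obtained directly and must be routed through $p=2$ by hypercontractivity. The delicate point there is to arrange the time shifts $t_0$ and the exponents of $\nu(\Omega)$ so that every multiplicative constant remains finite while the exponential rate $\omega$ is genuinely preserved rather than degraded; this is the step that makes the seemingly symmetric statement $\mathcal{A}_p=\mathcal{B}_p$ nontrivial.
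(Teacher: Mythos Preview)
Your proposal is correct and follows essentially the same route the paper indicates: the paper skips the proof and refers to \cite[Theorem 5.3]{ALL13}, explicitly naming the Poincar\'e inequality (at $p=2$), the gradient bound \eqref{stima_radice}, and hypercontractivity \eqref{hyper} as the three ingredients, which is precisely the machinery you deploy (you in fact use the $L^p$ Poincar\'e inequality \eqref{poin} for $p\ge 2$, which is available in the paper and slightly streamlines the fixed-$p$ inclusion $\mathcal{B}_p\subseteq\mathcal{A}_p$, but this is a cosmetic difference).

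One small caveat on the pointwise limit: the step ``the behaviour on each interval $[n,n+1]$ is controlled by contractivity'' is not quite enough as stated, since $L^p$-contractivity does not bound $|T_\Omega(t)f(x)-m_\Omega(f)|$ pointwise in terms of $|T_\Omega(n)f(x)-m_\Omega(f)|$. A clean fix is to run the Borel--Cantelli argument on a sequence $(t_k)$ with $t_{k+1}-t_k\to 0$ (e.g.\ $t_k=\sqrt{k}$), so that the exponential decay still gives $\sum_k\|T_\Omega(t_k)f-m_\Omega(f)\|_{L^p}<\infty$ and hence $T_\Omega(t_k)f\to m_\Omega(f)$ $\nu$-a.e.; then for $t\in[t_k,t_{k+1}]$ one uses the pointwise Jensen inequality \eqref{jensen} together with $|T_\Omega(t)f-m_\Omega(f)|\le T_\Omega(t-t_k)|T_\Omega(t_k)f-m_\Omega(f)|$ and the strong continuity at $0$. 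The paper itself is equally terse on this point (cf.\ the corollary containing \eqref{decay-2}), so this is a detail rather than a defect in your overall strategy.
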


\end{document}